\numberwithin{equation}{section}
\def\bb#1\eb{\textcolor{blue}
{#1}} %
\def\br#1\er{\textcolor{red}
{#1}} %
\def\bv#1\ev{\textcolor{green}
{#1}} %
\def\bc#1\ec{\textcolor{cyan}
{#1}} %
\def\Xint#1{\mathchoice
  {\XXint\displaystyle\textstyle{#1}}%
  {\XXint\textstyle\scriptstyle{#1}}%
  {\XXint\scriptstyle\scriptscriptstyle{#1}}%
  {\XXint\scriptscriptstyle\scriptscriptstyle{#1}}%
  \!\int}
\def\XXint#1#2#3{{\setbox0=\hbox{$#1{#2#3}{\int}$}
  \vcenter{\hbox{$#2#3$}}\kern-.5\wd0}}
\def\-int{\Xint -}
\newcommand{\e}{\varepsilon}
\newcommand{\R}{\mathbb{R}}
\newcommand{\N}{\mathbb{N}}
\DeclareMathOperator{\B}{\mathcal{B}}
\newcommand{\h}{H^{s}(\R^{N})}
\newcommand{\p}{2^{*}_{s}}
\newtheorem{prop}{Proposition}[section]
\newtheorem{lem}{Lemma}[section]
\newtheorem{thm}{Theorem}[section]
\newtheorem{remark}{Remark}[section]
\begin{document}
\title[Fractional Choquard equation]{Multiplicity and concentration results for a fractional Choquard equation via penalization method}
\author[V. Ambrosio]{Vincenzo Ambrosio}
\address{Dipartimento di Scienze Pure e Applicate (DiSPeA),
Universit\`a degli Studi di Urbino `Carlo Bo'
Piazza della Repubblica, 13
61029 Urbino (Pesaro e Urbino, Italy)}
\email{vincenzo.ambrosio@uniurb.it}

\keywords{Fractional Choquard equation; penalization method; multiplicity of solutions}
\subjclass[2010]{35A15, 35B09, 35R11, 45G05}

\date{}

\begin{abstract}
This paper is devoted to the study of the following fractional Choquard equation
$$
\e^{2s}(-\Delta)^{s} u + V(x)u = \e^{\mu-N}\left(\frac{1}{|x|^{\mu}}*F(u)\right)f(u) \mbox{ in } \R^{N},
$$
where $\varepsilon>0$ is a parameter, $s\in (0, 1)$, $N>2s$, $(-\Delta)^{s}$ is the fractional Laplacian, $V$ is a positive continuous potential with local minimum, $0<\mu<2s$, and $f$ is a superlinear continuous function with subcritical growth.
By using the penalization method and the Ljusternik-Schnirelmann theory, we investigate the multiplicity and concentration of positive solutions for the above problem.
\end{abstract}

\maketitle

\section{Introduction}

\noindent
In this paper we deal with the following nonlinear fractional Choquard equation
\begin{equation}\label{P}
\e^{2s}(-\Delta)^{s} u + V(x)u = \e^{\mu-N}\left(\frac{1}{|x|^{\mu}}*F(u)\right)f(u) \mbox{ in } \R^{N},
\end{equation}
where $\e>0$ is a parameter, $s\in (0,1)$, $N>2 s$ and $0<\mu<2s$.
The potential $V: \R^{N}\rightarrow \R$ is a continuous function verifying the following hypotheses:
\begin{compactenum}[$(V_1)$]
\item $\inf_{x\in \R^{N}} V(x)=V_{0}>0$;
\item there exists a bounded open set $\Lambda\subset \R^{N}$ such that
$$
V_{0}<\min_{x\in \partial \Lambda} V(x). 
$$
\end{compactenum}
Concerning the nonlinearity $f: \R\rightarrow \R$, we assume that $f$ is a continuous function such that $f(t)=0$ for $t<0$, and satisfies the following conditions: 
\begin{compactenum}[($f_1$)]
\item $\displaystyle{\lim_{t\rightarrow 0} \frac{f(t)}{t}=0}$;
\item there exists $q\in (2, \frac{2^{*}_{s}}{2}(2-\frac{\mu}{N}))$, where $2^{*}_{s}=\frac{2N}{N-2s}$, such that $\displaystyle{\lim_{t\rightarrow \infty} \frac{f(t)}{t^{q-1}}=0}$;
\item $f$ verifies the following Ambrosetti-Rabinowitz type condition \cite{AR}:
\begin{equation*}
0< 4 F(t) = 4 \int_{0}^{t} f(\tau) \, d\tau \leq 2f(t) \, t \, \mbox{ for all } t> 0;
\end{equation*}
\item The map $\displaystyle{t \mapsto \frac{f(t)}{t}}$ is increasing for every $t>0$.
\end{compactenum}
The nonlocal operator $(-\Delta)^{s}$ is the fractional Laplacian which may be defined for any $u:\R^{N}\rightarrow \R$ sufficiently smooth by
$$
(-\Delta)^{s}u(x)= -\frac{C(N,s)}{2} \int_{\R^{N}} \frac{u(x+y)+u(x-y)-2u(x)}{|y|^{N+2s}} \,dy  \quad (x\in \R^{N}),
$$
where $C(N,s)$ is a suitable normalization constant; see for instance \cite{DPV, MBRS}.

We recall that the problem (\ref{P}) is motivated by the search of standing wave solutions for the following fractional Schr\"odinger equation
\begin{equation*}
\imath \frac{\partial \psi}{\partial t}=(-\Delta)^{s} \psi+V(x)\psi -\left(\frac{1}{|x|^{\mu}}*|\Psi|^{q}\right)|\Psi|^{q-2}\Psi \quad (t, x)\in \R\times \R^{N},
\end{equation*}
which naturally models many physical problems, such as phase transition, conservation laws, especially in fractional quantum mechanics. For physical motivations we refer to \cite{A2, A3, DDPW, FFV, FQT, FL, HZ, Laskin2, Secchi1}. \\
When $s=1$, $V(x)\equiv 1$, $\e=1$ and $F(u)=\frac{|u|^{2}}{2}$, \eqref{P} boils down to the Choquard-Pekar equation
\begin{equation}\label{CE}
-\Delta u + u = \left(\frac{1}{|x|^{\mu}}*|u|^{2}\right)u \mbox{ in } \R^{N}
\end{equation}
introduced by Pekar \cite{Pek} to describe the quantum mechanics of a polaron.  
Subsequently, Choquard used \eqref{CE} to describe an electron trapped in its own hole as approximation to Hartree-Fock Theory of one component plasma; see \cite{LL, Pen}.\\
The early existence and symmetry results are due to Lieb \cite{Lieb} and Lions \cite{Lions}.
Later, Ma and Zhao \cite{MZ} obtained some qualitative properties of positive solutions considering powers like $|u|^{q}$. Moroz and Van Shaftingen \cite{MVS1} investigated regularity, radial symmetry and asymptotic behavior at infinity of positive solutions for a generalized Choquard equation. Alves and Yang \cite{AY3} studied multiplicity and concentration of positive solutions for a Choquard equation. Further results on Choquard equations can be found in \cite{Ack, AY1, MVS3, Secchi0, YZZ, WW}. 
\noindent

In the case $s\in (0, 1)$, only few recent papers considered fractional Choquard equations like \eqref{P}. 
In \cite{DSS} d'Avenia et al. considered the following fractional Choquard equation 
$$
(-\Delta)^{s}u+u=\left(\frac{1}{|x|^{\mu}}*|u|^{p}\right)|u|^{p-2}u \mbox{ in } \R^{N},
$$
obtaining regularity,  existence and non existence, symmetry and decay properties of solutions.
Frank and Lenzman \cite{FL} established uniqueness of nonnegative ground states for the $L^{2}$ critical boson star equation
$$
(-\Delta)^{\frac{1}{2}}u+u=\left(\frac{1}{|x|^{\mu}}*|u|^{2}\right)u \mbox{ in } \R^{3},
$$
by using variational methods and the extension technique \cite{CS}.
Coti Zelati and Nolasco \cite{CZN} obtained existence of ground state solutions for a pseudo-relativistic Hartree-equation via critical point theory.
Shen et al. \cite{SGY} investigated the existence of ground state solutions for a fractional Choquard equation involving a nonlinearity satisfying Berestycki-Lions type assumptions. Chen and Liu \cite{CL} studied an autonomous fractional Choquard equation via Nehari manifold and concentration-compactness arguments.
Belchior et al. \cite{BBMP} dealt with existence, regularity and polynomial decay for a fractional Choquard equation involving the fractional $p$-Laplacian. \\
Motivated by the above papers, in this work we focus our attention on the multiplicity and the concentration of positive solutions of (\ref{P}), involving a potential and a continuous nonlinearity satisfying the assumptions $(V_1)$-$(V_2)$ and $(f_1)$-$(f_4)$ respectively. In particular, we are interested in relating the number of positive solutions of (\ref{P}) with the topology of the set $M=\{x\in \Lambda: V(x)=V_{0}\}$. In order to state precisely our result, we recall that if $Y$ is a given closed set of a topological space $X$, we denote by $cat_{X}(Y)$ the Ljusternik-Schnirelmann category of $Y$ in $X$, that is the least number of closed and contractible sets in $X$ which cover $Y$.\\
The main result of this paper is the following:
\begin{thm}\label{thmf}
Suppose that $V$ verifies $(V_1)$-$(V_2)$,  $0<\mu<2s$ and $f$ satisfies $(f_1)$-$(f_4)$ with $2<q<\frac{2(N-\mu)}{N-2s}$. Then, for any $\delta>0$ such that $M_{\delta}=\{x\in \R^{N}: dist(x, M)\leq \delta\}\subset \Lambda$, there exists $\e_{\delta}>0$ such that, for any $\e\in (0, \e_{\delta})$, the problem  \eqref{P} has at least $cat_{M_{\delta}}(M)$ positive solutions. Moreover, if $u_{\e}$ denotes one of these positive solutions and $x_{\e}\in \R^{N}$ its global maximum, then 
$$
\lim_{\e\rightarrow 0} V(x_{\e})=V_{0}.
$$
\end{thm}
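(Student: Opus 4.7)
The plan is to adapt the del Pino--Felmer penalization scheme together with the Benci--Cerami topological argument to the nonlocal Choquard framework. First I would rescale $u(x)=v(x/\e)$ to convert \eqref{P} into the equivalent problem
\[
(-\Delta)^s v + V(\e x)\,v = \bigl(|x|^{-\mu}*F(v)\bigr)f(v) \quad \text{in } \R^N,
\]
so that the parameter $\e$ appears only inside the potential. Fixing $\delta>0$ with $M_\delta\subset\Lambda$, I would then modify $f$ outside $\Lambda$: choose $k>2$ and define $\tilde f(t)=\min\{f(t),\,V_0 t/k\}$, then set $g(x,t)=\chi_\Lambda(x)f(t)+(1-\chi_\Lambda(x))\tilde f(t)$ with primitive $G(x,t)=\int_0^t g(x,\tau)\,d\tau$. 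The penalized functional
\[
J_\e(v)=\tfrac12\!\int_{\R^N}\!\!\bigl(|(-\Delta)^{s/2}v|^2+V(\e x)v^2\bigr)dx-\tfrac12\!\int_{\R^N}\!\!\bigl(|x|^{-\mu}*G(\e x,v)\bigr)G(\e x,v)\,dx
\]
has mountain-pass geometry and, thanks to $(f_1)$--$(f_4)$, the Hardy--Littlewood--Sobolev inequality, and the subcritical restriction $q<2(N-\mu)/(N-2s)$, satisfies the Palais--Smale condition at every level. Any critical point of $J_\e$ that stays below the truncation threshold outside $\Lambda/\e$ is automatically a genuine solution of \eqref{P}.

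Next I would analyze the autonomous limit problem
\[
(-\Delta)^s w+V_0\,w=\bigl(|x|^{-\mu}*F(w)\bigr)f(w)\quad\text{in }\R^N,
\]
whose mountain-pass level $c_0$ is attained at a positive ground state $w_0$. Because $f$ is only continuous, I would treat the Nehari manifold as a topological manifold in the Szulkin--Weth sense, showing that $c_0=\inf_{\mathcal N_0} E_0$ and that the projection onto $\mathcal N_0$ is continuous. On the penalized side, a direct comparison using the definition of $g$ yields $c_\e\to c_0$ as $\e\to 0$ for the corresponding minimax values.

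The multiplicity count comes from the Benci--Cerami two-map construction. For $y\in M$, I would set
\[
\Phi_\e(y)(x)=t_\e(y)\,\eta\bigl(|\e x-y|\bigr)\,w_0\!\left(x-\tfrac{y}{\e}\right),
\]
with $\eta$ a cutoff supported in $\Lambda$ and $t_\e(y)>0$ the Nehari projection, so that $\Phi_\e$ maps $M$ into the sublevel $\{J_\e\le c_0+h(\e)\}\cap\mathcal N_\e$ with $h(\e)\to 0$. Conversely, I would define a barycenter map $\beta_\e$ from this sublevel into $M_\delta$ using $L^r$-concentration arguments and verify that $\beta_\e\circ\Phi_\e$ is homotopic to the inclusion $M\hookrightarrow M_\delta$. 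The abstract Ljusternik--Schnirelmann category inequality then produces at least $\mathrm{cat}_{M_\delta}(M)$ distinct critical points of $J_\e$.

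The main obstacle, as always in the penalization approach, will be the a posteriori verification that these critical points actually solve the original equation, i.e.\ that they remain below the truncation threshold outside $\Lambda/\e$. In this fractional Choquard setting it requires three ingredients: uniform $L^\infty$ bounds via Moser / De~Giorgi iteration adapted to $(-\Delta)^s$ and to the Riesz convolution term; a concentration-compactness argument showing that, after translating by a sequence of maxima $\e y_\e\to y_0$, the rescaled solutions converge strongly in $\h$ to a ground state of the limit problem with $V(y_0)=V_0$; and a polynomial decay estimate of the form $v_\e(x)\lesssim (1+|x-y_\e|)^{-(N+2s)}$ produced by comparison with explicit barriers for the fractional Laplacian. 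Together these force $y_0\in M$, $\e y_\e\in\Lambda$ for all small $\e$, and the penalization to be inactive on $\mathrm{supp}\,v_\e$, which yields both the multiplicity conclusion and the concentration statement $V(x_\e)\to V_0$.
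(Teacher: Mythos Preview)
Your outline matches the paper's strategy closely: rescaling, del Pino--Felmer penalization, Szulkin--Weth treatment of the Nehari manifold, the Benci--Cerami maps $\Phi_\e$ and $\beta_\e$, and Moser iteration are all exactly what the paper does.

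There is, however, one Choquard-specific point you gloss over that the paper isolates as a separate lemma, and without it your argument has a genuine gap. In the local del Pino--Felmer scheme it suffices to pick any $k>2$ so that $\tilde f(t)t\le \frac{V_0}{k}t^2$ outside $\Lambda$. Here the effective nonlinearity is the product $\bigl(|x|^{-\mu}*G(\e x,u)\bigr)g(\e x,u)$, and both the tail estimate in the Palais--Smale argument and the final verification that $u_\e<a$ on $\R^N\setminus\Lambda_\e$ require the \emph{full product}, not just $g$, to be dominated by a fraction of $V_0 u^2$. The paper first proves (its Lemma~\ref{lemK}) that the convolution $\tilde K_\e(u)=|x|^{-\mu}*G(\e x,u)$ is bounded in $L^\infty(\R^N)$ uniformly over the ball $\B=\{\|u\|_\e^2\le 4(\kappa+1)\}$; this is precisely where the hypotheses $\mu<2s$ and $q<\frac{2(N-\mu)}{N-2s}$ enter. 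Only \emph{after} this bound $C_0$ is established does one choose the penalization constant $\ell_0$ so that $C_0/\ell_0<\tfrac12$. As a consequence the Palais--Smale condition is proved only for levels in $[c_\e,\kappa]$, not ``at every level'' as you claim, and the a posteriori step uses $\|\tilde K_\e(u_n)\|_{L^\infty}/\ell_0<\tfrac12$ explicitly. With a generic $k>2$ chosen in advance, neither step goes through.

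A smaller difference concerns the uniform decay of the translated solutions $v_n$. The paper does not build polynomial barriers; instead it writes $v_n=\mathcal K*h_n$ with $\mathcal K$ the Bessel kernel of $(-\Delta)^s+1$, observes that $\|h_n\|_{L^\infty}\le C$ and $h_n\to h$ in every $L^p$, and concludes $\sup_n|v_n(x)|\to 0$ as $|x|\to\infty$. Your barrier approach is workable in principle, but note that to control the right-hand side uniformly in $n$ you again need the $L^\infty$ bound on the convolution discussed above, so the two issues are linked.
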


\noindent
Firstly, we note that the restriction on the exponent $q$ is justified by the Hardy-Littlewood-Sobolev inequality (see Theorem \ref{HLS} in Section $2$).
Indeed, if $F(u)=|u|^{q}$, then the term
$$
\int_{\R^{N}} \left(\frac{1}{|x|^{\mu}}*F(u)\right) F(u)\, dx
$$
is well-defined if $F(u)\in L^{t}(\R^{N})$ for $t>1$ such that $\frac{2}{t}+\frac{\mu}{N}=2$. Hence, recalling that $H^{s}(\R^{N})$ is continuously embedded into $L^{r}(\R^{N})$ for any $r\in [2, 2^{*}_{s}]$, we need to require that $tq\in [2, \p]$, which leads to assume that 
$$
2-\frac{\mu}{N}\leq q\leq \frac{\p}{2}\left(2-\frac{\mu}{N}\right).
$$
Now, we give a sketch of the proof of Theorem \ref{thmf}. Inspired by \cite{AY3}, we adapt the del Pino-Felmer penalization technique \cite{DPF} considering an auxiliary problem. It consists in making a suitable modification on the nonlinearity $f$, solving a modified problem and then check that, for $\e$ sufficiently small, the solutions of the modified problem are indeed solutions of the original one. Differently from the case $s=1$, in our setting a more accurate investigation is needed due to the presence of  two non-local terms. 
Moreover, the nonlinearity appearing in \eqref{P} is only continuous (while $f\in C^{1}$ in \cite{AY3}), so to overcome the non-differentiability of the associated Nehari manifold, we will use some abstract critical point results due to Szulkin and Weth \cite{SW}. 
Concerning the multiplicity result for the modified problem, we resemble some ideas due to Benci and Cerami in \cite{BC}, based on the comparison between the category of some sublevel sets of the modified functional and the category of the set $M$. Finally, in order to prove that the solutions $u_{\e}$ of the modified problem are solutions of the problem \eqref{P}, we adapt a Moser iteration argument \cite{Moser} to establish $L^{\infty}$-estimates, and after showed that the convolution term remains bounded, we exploit some useful properties of the Bessel kernel \cite{AM, FQT}  to obtain the desired result.
To our knowledge, this is the first result in which the concentration and the multiplicity of solutions to \eqref{P} are considered by using penalization argument and Ljusternik-Schnirelmann category theory.
\noindent

The plan of the paper is the following: In Section $2$ we introduce the functional setting and the modified problem. The Section $3$ is devoted to the existence of positive solutions to the autonomous problem associated to \eqref{P}. In Section $4$, we obtain a multiplicity result using Ljusternik-Schnirelmann theory. Finally, exploiting a Moser iteration scheme, we are able to prove that for $\e$ small enough, the solutions of the modified problem are indeed solutions of \eqref{P}.

\section{Functional setting}
\noindent
For any $s\in (0,1)$, we denote by $\mathcal{D}^{s, 2}(\R^{N})$ the completion of $C^{\infty}_{0}(\R^{N})$ with respect to
$$
[u]^{2}=\iint_{\R^{2N}} \frac{|u(x)-u(y)|^{2}}{|x-y|^{N+2s}} \, dx \, dy =\|(-\Delta)^{\frac{s}{2}} u\|^{2}_{L^{2}(\R^{N})},
$$
that is
$$
\mathcal{D}^{s, 2}(\R^{N})=\left\{u\in L^{2^{*}_{s}}(\R^{N}): [u]_{H^{s}(\R^{N})}<\infty\right\}.
$$
Let us introduce the fractional Sobolev space
$$
H^{s}(\R^{N})= \left\{u\in L^{2}(\R^{N}) : \frac{|u(x)-u(y)|}{|x-y|^{\frac{N+2s}{2}}} \in L^{2}(\R^{2N}) \right \}
$$
endowed with the natural norm 
$$
\|u\| = \sqrt{[u]^{2} + \|u\|_{L^{2}(\R^{N})}^{2}}.
$$

\noindent
We collect the following useful results.
\begin{thm}\cite{DPV}\label{Sembedding}
Let $s\in (0,1)$ and $N>2s$. Then there exists a sharp constant $S_{*}=S(N, s)>0$
such that for any $u\in H^{s}(\R^{N})$
\begin{equation}\label{FSI}
\|u\|^{2}_{L^{2^{*}_{s}}(\R^{N})} \leq S_{*}^{-1} [u]^{2}. 
\end{equation}
Moreover $H^{s}(\R^{N})$ is continuously embedded in $L^{q}(\R^{N})$ for any $q\in [2, 2^{*}_{s}]$ and compactly in $L^{q}_{loc}(\R^{N})$ for any $q\in [2, 2^{*}_{s})$. 
\end{thm}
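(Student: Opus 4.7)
The plan is to establish the three claims in order: the sharp fractional Sobolev inequality \eqref{FSI}, then the continuous embedding into $L^{q}(\R^{N})$ for $q\in[2,\p]$ by interpolation, and finally the local compact embedding for subcritical $q$ via the Riesz--Fr\'echet--Kolmogorov compactness criterion.

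For the inequality, I would pass to the Fourier side, using the Plancherel identity and the well-known identification
$$[u]^{2}=2C(N,s)^{-1}\int_{\R^{N}}|\xi|^{2s}|\widehat{u}(\xi)|^{2}\,d\xi=2C(N,s)^{-1}\|(-\Delta)^{s/2}u\|^{2}_{L^{2}(\R^{N})}.$$
Setting $g:=(-\Delta)^{s/2}u$, one has formally $u=I_{s}g$, where $I_{s}$ is the Riesz potential of order $s$ (its kernel being a constant multiple of $|x|^{-(N-2s)}$). The Hardy--Littlewood--Sobolev inequality then yields $\|u\|_{L^{\p}(\R^{N})}\le C\|g\|_{L^{2}(\R^{N})}$, i.e.\ \eqref{FSI} with some constant. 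To obtain the \emph{sharp} constant $S_{*}$, I would invoke the classification of HLS extremizers by Lieb and the resulting explicit value due to Cotsiolis--Tavoularis; this identifies $S_{*}$ as the infimum of $[u]^{2}/\|u\|^{2}_{L^{\p}}$, attained on the Aubin--Talenti bubbles $U(x)=(1+|x|^{2})^{-(N-2s)/2}$ and their dilations/translations.

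For the continuous embedding into $L^{q}(\R^{N})$, I would combine \eqref{FSI} with the definition of the $H^{s}$-norm. The endpoint $q=2$ is trivial and $q=\p$ follows from \eqref{FSI}; for intermediate $q$ one writes $\tfrac{1}{q}=\tfrac{1-\theta}{2}+\tfrac{\theta}{\p}$ with $\theta\in(0,1)$ and applies H\"older's inequality to get
$$\|u\|_{L^{q}(\R^{N})}\le \|u\|_{L^{2}(\R^{N})}^{1-\theta}\|u\|_{L^{\p}(\R^{N})}^{\theta}\le C\|u\|^{1-\theta}\|u\|^{\theta}=C\|u\|.$$

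For the local compact embedding, I would fix a compact $K\subset\R^{N}$ and a bounded sequence $(u_{n})$ in $H^{s}(\R^{N})$ and verify the Riesz--Fr\'echet--Kolmogorov conditions for $\{u_{n}|_{K}\}$ in $L^{q}(K)$: (i) boundedness in $L^{q}(K)$, which follows from the continuous embedding just proved, and (ii) equicontinuity of translations, i.e.\ $\sup_{n}\|\tau_{h}u_{n}-u_{n}\|_{L^{q}(K)}\to 0$ as $|h|\to 0$. For the latter I would first prove the $L^{2}$ version using Plancherel:
$$\|\tau_{h}u_{n}-u_{n}\|^{2}_{L^{2}(\R^{N})}=\int_{\R^{N}}|e^{i\xi\cdot h}-1|^{2}|\widehat{u}_{n}(\xi)|^{2}\,d\xi,$$
splitting into $\{|\xi|\le R\}$ and $\{|\xi|>R\}$, using $|e^{i\xi\cdot h}-1|\le\min(2,|\xi|\,|h|)$ and the uniform bound on $\int|\xi|^{2s}|\widehat{u}_{n}|^{2}$ to control the second region, then letting $|h|\to 0$ and $R\to\infty$ in that order. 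Interpolating this $L^{2}$ equicontinuity with the uniform $L^{\p}$ bound gives $L^{q}$ equicontinuity on $K$ for all $q\in[2,\p)$, and the tightness on $K$ is automatic since $K$ is compact. Combined with (i) this yields precompactness in $L^{q}(K)$.

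The main subtlety is the sharpness of $S_{*}$; I would treat this as a black box coming from the HLS extremization, since in the applications of this paper only the existence of the embedding constant is needed. The other nontrivial point is the uniform-in-$n$ smallness of the Fourier tail in the equicontinuity step, which is where the $s$-regularity $\int|\xi|^{2s}|\widehat{u}_{n}|^{2}<\infty$ does the work that prevents the argument from degenerating at the endpoint $q=\p$.
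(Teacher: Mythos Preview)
The paper does not prove this theorem: it is stated with a citation to \cite{DPV} (the ``Hitchhiker's guide'' of Di Nezza--Palatucci--Valdinoci) and no proof is given. It is used throughout the paper only as a black box for the continuous and compact embeddings and for the existence of the Sobolev constant $S_{*}$; the sharpness plays no role in any argument of the paper.

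Your sketch is a correct and standard route to the result: the Riesz-potential / HLS argument gives \eqref{FSI} with some constant, the sharp value is the Cotsiolis--Tavoularis computation you cite, interpolation gives the intermediate $L^{q}$ embeddings, and the Riesz--Fr\'echet--Kolmogorov criterion combined with the Fourier-side $L^{2}$ equicontinuity and interpolation handles the local compactness for $q\in[2,2^{*}_{s})$. Since the paper offers no proof of its own, there is nothing further to compare.
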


\begin{lem}\cite{FQT}\label{lions lemma}
Let $N>2s$. If $(u_{n})$ is a bounded sequence in $H^{s}(\R^{N})$ and if
$$
\lim_{n \rightarrow \infty} \sup_{y\in \R^{N}} \int_{B_{R}(y)} |u_{n}|^{2} dx=0
$$
where $R>0$,
then $u_{n}\rightarrow 0$ in $L^{t}(\R^{N})$ for all $t\in (2, 2^{*}_{s})$.
\end{lem}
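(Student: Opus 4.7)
The plan is to combine, on balls $B_R(y)$ of fixed radius, the interpolation inequality between $L^{2}$ and $L^{2^{*}_{s}}$ with the localized fractional Sobolev embedding, and then sum over a countable cover of $\R^{N}$ with bounded multiplicity of overlap. Since $(u_n)$ is bounded in $H^s(\R^N)$, Theorem \ref{Sembedding} yields uniform bounds on $\|u_n\|_{L^{2^{*}_{s}}(\R^N)}$ and on the Gagliardo seminorm $[u_n]_{H^s(\R^N)}$. First I would fix a countable family $\{B_R(y_i)\}_{i\in\N}$ covering $\R^N$ whose overlap multiplicity is at most some $M_0 = M_0(N)$, for instance by placing the centers on a sufficiently fine lattice.

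On each ball $B_i = B_R(y_i)$ I would apply the fractional Sobolev embedding on bounded Lipschitz domains,
$$\|u\|_{L^{2^{*}_{s}}(B_i)}^{2} \leq C\bigl(\|u\|_{L^2(B_i)}^{2} + [u]_{H^s(B_i)}^{2}\bigr),$$
with a constant independent of $i$ by translation invariance, where $[u]_{H^s(B_i)}^{2} = \iint_{B_i\times B_i}\frac{|u(x)-u(y)|^{2}}{|x-y|^{N+2s}}\,dx\,dy$. Combining this with the interpolation inequality
$$\|u\|_{L^t(B_i)} \leq \|u\|_{L^2(B_i)}^{1-\lambda}\,\|u\|_{L^{2^{*}_{s}}(B_i)}^{\lambda}, \qquad \frac{1}{t}=\frac{1-\lambda}{2}+\frac{\lambda}{2^{*}_{s}},$$
and the elementary bound $(a+b)^{c}\leq C(a^{c}+b^{c})$, I would derive a ball-wise estimate of the form
$$\|u_n\|_{L^t(B_i)}^{t} \leq C\bigl(\|u_n\|_{L^2(B_i)}^{t} + \|u_n\|_{L^2(B_i)}^{(1-\lambda)t}\,[u_n]_{H^s(B_i)}^{\lambda t}\bigr).$$

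Summing over $i$ and using the overlap bounds $\sum_i\|u_n\|_{L^2(B_i)}^{2}\leq M_0\|u_n\|_{L^2(\R^N)}^{2}$ and $\sum_i [u_n]_{H^s(B_i)}^{2}\leq M_0[u_n]_{H^s(\R^N)}^{2}$ (both uniformly bounded), the first term is dominated by $(\sup_y\|u_n\|_{L^2(B_R(y))})^{t-2}\cdot M_0\|u_n\|_{L^2(\R^N)}^{2}$, obtained by factoring the sup out of $\|u_n\|_{L^2(B_i)}^{t-2}$ (permitted since $t>2$). For the mixed term I would split $\|u_n\|_{L^2(B_i)}^{(1-\lambda)t}=\|u_n\|_{L^2(B_i)}^{(1-\lambda)t-\epsilon}\,\|u_n\|_{L^2(B_i)}^{\epsilon}$ for a suitable $\epsilon\in(0,(1-\lambda)t)$, bound the first sub-factor uniformly by $(\sup_y\|u_n\|_{L^2(B_R(y))})^{(1-\lambda)t-\epsilon}$, and apply discrete H\"older to the remaining sum with exponents chosen so that only $\sum_i\|u_n\|_{L^2(B_i)}^{2}$ and a power at least $2$ of $[u_n]_{H^s(B_i)}$ summed over $i$ appear on the right; both are bounded using the overlap estimates and the global $H^s$-bound. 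The end result is that each term is bounded by a strictly positive power of $\sup_y\|u_n\|_{L^2(B_R(y))}$ times a constant, which tends to zero by hypothesis.

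The main obstacle is that, because $N>2s$, the natural interpolation exponent $(1-\lambda)t$ on the $L^2(B_i)$-factor is strictly less than $2$, so one cannot directly absorb it into the global $\|u_n\|_{L^2(\R^N)}^{2}$ via the overlap bound. The introduction of the localized half-seminorm $[u_n]_{H^s(B_i)}$, via the ball-wise Sobolev embedding, is precisely what supplies a second summable quantity and makes the splitting trick produce a strictly positive remaining power of the vanishing $\sup_y\|u_n\|_{L^2(B_R(y))}$ for every $t\in(2,2^{*}_{s})$.
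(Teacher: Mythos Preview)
The paper does not supply its own proof of this lemma; it is stated with a direct citation to Felmer--Quaas--Tan \cite{FQT}, so there is nothing in the paper itself to compare your argument against. Your covering-plus-interpolation proof is correct and is the standard Lions-type argument adapted to the fractional setting. The one step that needs a line of justification is the discrete H\"older on the mixed term: you must check that an $\epsilon\in(0,(1-\lambda)t)$ with $\epsilon+\lambda t\ge 2$ exists, and this follows immediately from $(1-\lambda)t+\lambda t=t>2$; the overlap bound $\sum_i[u_n]_{H^s(B_i)}^{2}\le M_0[u_n]_{H^s(\R^N)}^{2}$ is also fine since for each pair $(x,y)$ at most $M_0$ indices $i$ satisfy $x,y\in B_i$.

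For context, the argument in \cite{FQT} (and in the classical Lions paper) runs along the same lines. A slightly cleaner variant, which avoids the $\epsilon$-splitting, is to first establish the conclusion for a single exponent $t_0\in\bigl(2,\,2+\tfrac{4s}{N}\bigr]$, where $\lambda t_0\le 2$ makes the summation direct, and then extend to all $t\in(2,2^{*}_{s})$ by a global interpolation between $L^{t_0}(\R^{N})$ and $L^{2^{*}_{s}}(\R^{N})$. Both routes give the same result with comparable effort.
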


\begin{thm}\label{HLS}\cite{LL}
Let $r, t>1$ and $0<\mu<N$ such that $\frac{1}{r}+\frac{\mu}{N}+\frac{1}{t}=2$. Let $f\in L^{r}(\R^{N})$ and $h\in L^{t}(\R^{N})$. Then there exists a sharp constant $C(r, N, \mu, t)>0$ independent of $f$ and $h$ such that 
$$
\int_{\R^{N}}\int_{\R^{N}} \frac{f(x)h(y)}{|x-y|^{\mu}}\, dx dy\leq C(r, N, \mu, t)\|f\|_{L^{r}(\R^{N})}\|h\|_{L^{t}(\R^{N})}.
$$
\end{thm}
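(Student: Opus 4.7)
The plan is to prove the inequality by reducing it via Fubini and Hölder to a strong-type bound for the \emph{Riesz potential of order} $N-\mu$. Setting $Tf(y):=\int_{\R^{N}} f(x)/|x-y|^{\mu}\, dx$, Fubini's theorem rewrites the left-hand side as $\int_{\R^{N}} h(y)\, Tf(y)\, dy$, which by Hölder's inequality with exponents $t$ and $t'=t/(t-1)$ is bounded by $\|h\|_{L^{t}}\, \|Tf\|_{L^{t'}}$. The hypothesis $\frac{1}{r}+\frac{\mu}{N}+\frac{1}{t}=2$ rearranges to $\frac{1}{t'}=\frac{1}{r}-\frac{N-\mu}{N}$, so everything reduces to the scale-invariant bound $\|Tf\|_{L^{t'}(\R^{N})}\le C\,\|f\|_{L^{r}(\R^{N})}$ for $r>1$.

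For this I would use Hedberg's pointwise argument. Fix $R>0$ and split
$$
Tf(y)=\int_{|x-y|<R}\frac{f(x)}{|x-y|^{\mu}}\, dx+\int_{|x-y|\ge R}\frac{f(x)}{|x-y|^{\mu}}\, dx=:I_{1}(y,R)+I_{2}(y,R).
$$
A dyadic decomposition of $B_{R}(y)$ into annuli $\{2^{-k-1}R\le |x-y|<2^{-k}R\}$ and the definition of the Hardy--Littlewood maximal function $M$ yield $I_{1}(y,R)\le C\,R^{N-\mu}\,Mf(y)$, after summing the geometric series (convergent because $N-\mu>0$). For $I_2$, Hölder's inequality with conjugate exponents $r,r'$ gives
$$
I_{2}(y,R)\le \|f\|_{L^{r}}\left(\int_{|z|\ge R}|z|^{-\mu r'}\, dz\right)^{1/r'}=C\,R^{N/r'-\mu}\,\|f\|_{L^{r}(\R^{N})},
$$
the integrability at infinity resting on $\mu r'>N$, equivalently $r<N/(N-\mu)$, which is forced by $1/t'>0$.

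Minimizing $C R^{N-\mu}\, Mf(y)+C R^{N/r'-\mu}\|f\|_{L^{r}}$ over $R>0$ and using $N/r-N/t'=N-\mu$ (a restatement of the HLS scaling) yields the Hedberg inequality
$$
Tf(y)\le C\,(Mf(y))^{\,r/t'}\,\|f\|_{L^{r}(\R^{N})}^{\,1-r/t'}.
$$
Raising to the power $t'$, integrating in $y$, and invoking the Hardy--Littlewood maximal inequality $\|Mf\|_{L^{r}(\R^{N})}\le C_{r}\|f\|_{L^{r}(\R^{N})}$ (valid precisely because $r>1$) gives $\|Tf\|_{L^{t'}}\le C\|f\|_{L^{r}}$, and then the dualization of the first paragraph concludes the proof with a constant $C(r,N,\mu,t)$ depending only on the parameters.

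The main obstacle is twofold. First, the Hardy--Littlewood maximal inequality is itself nontrivial (Vitali covering lemma plus Marcinkiewicz interpolation); one could bypass Hedberg's route entirely by proving the weak-type estimate $|\{Tf>\lambda\}|\le (C\|f\|_{L^{r}}/\lambda)^{\,t'}$ directly via truncation, then interpolating between two such endpoint estimates at nearby exponents. Second, the assertion of a \emph{sharp} constant, and the identification of its extremizers, goes considerably deeper: Lieb's proof uses Riesz's rearrangement inequality, reduction to symmetric-decreasing $f,h$, and a compactness argument on a maximizing sequence. Since the theorem as used in the paper only needs the existence of a finite constant, I would not pursue the sharp value here and would simply record sharpness as a citation to \cite{LL}.
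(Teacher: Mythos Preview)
Your argument is correct: the Fubini--H\"older reduction to a strong-type bound for the Riesz potential, followed by Hedberg's pointwise inequality and the Hardy--Littlewood maximal theorem, is a standard and complete route to the existence of a finite constant $C(r,N,\mu,t)$ under the stated scaling relation. Your exponent bookkeeping is right, and you are also right to flag that the \emph{sharp} constant (and the identification of extremizers) requires Lieb's rearrangement and compactness argument, which is orthogonal to the boundedness proof.

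As for comparison with the paper: there is nothing to compare. The paper does not prove Theorem~\ref{HLS}; it is stated as a quoted result from \cite{LL} and used as a black box throughout (for instance in Lemma~\ref{MPG}, Lemma~\ref{lemK}, and the Palais--Smale analysis). So your proposal supplies strictly more than the paper does. If anything, note that the paper only ever uses the inequality in the special diagonal case $r=t=\frac{2N}{2N-\mu}$ (so that $G(\e x,u)\in L^{t}$ forces the convolution term into $L^{\frac{2N}{\mu}}$), which is the symmetric case where Lieb's sharp constant and Gaussian extremizers are actually known; but for every application in the paper a non-sharp constant suffices, exactly as you observe.
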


\noindent
For any $\e>0$, we denote by $H^{s}_{\e}$ the completion of $C^{\infty}_{0}(\R^{N})$ with respect to the norm
$$
\|u\|^{2}_{\e}=\iint_{\R^{2N}} \frac{|u(x)-u(y)|^{2}}{|x-y|^{N+2s}}\, dx dy+\int_{\R^{N}} V(\e x) u^{2}(x)\, dx.
$$
It is clear that $H^{s}_{\e}$ is a Hilbert space with respect to the inner product 
$$
(u, v)_{\e}=\iint_{\R^{2N}} \frac{(u(x)-u(y))}{|x-y|^{N+2s}}(v(x)-v(y))\, dx dy+\int_{\R^{N}} V(\e x) u v\, dx.
$$

\noindent
By using the change of variable $u(x)\mapsto u(\e x)$ we can see that the problem (\ref{P}) is equivalent to the following one
\begin{equation}\label{R}
(-\Delta)^{s} u + V(\e x)u =  \left(\frac{1}{|x|^{\mu}}*F(u)\right)f(u)  \mbox{ in } \R^{N}.  
\end{equation}
Fix $\ell>2$ and $a>0$ such that $\frac{f(a)}{a}=\frac{V_{0}}{\ell}$, and we introduce the functions
$$
\tilde{f}(t):=
\begin{cases}
f(t)& \text{ if $t \leq a$} \\
\frac{V_{0}}{\ell} t   & \text{ if $t >a$},
\end{cases}
$$ 
and
$$
g(x, t)=\chi_{\Lambda}(x)f(t)+(1-\chi_{\Lambda}(x))\tilde{f}(t),
$$
where $\chi_{\Lambda}$ is the characteristic function on $\Lambda$, and  we write $G(x, t)=\int_{0}^{t} g(x, \tau)\, d\tau$.\\
Let us note that from the assumptions $(f_1)$-$(f_4)$, $g$ satisfies the following properties:
\begin{compactenum}[($g_1$)]
\item $\displaystyle{\lim_{t\rightarrow 0} \frac{g(x, t)}{t}=0}$ uniformly in $x\in \R^{N}$;
\item $\displaystyle{\lim_{t\rightarrow \infty} \frac{g(x, t)}{t^{q-1}}=0}$ uniformly in $x\in \R^{N}$;
\item $0< 4 G(x, t)\leq 2g(x, t)t$ for any $x\in \Lambda$ and $t>0$, and \\
 $0\leq 2 G(x, t)\leq g(x, t)t\leq \frac{V_{0}}{\ell}t^{2}$ for any $x\in \R^{N}\setminus \Lambda$ and $t>0$,
\item $t\mapsto g(x, t)$ and $t\mapsto \frac{G(x, t)}{t}$ are increasing for all $x\in \R^{N}$ and $t>0$.
\end{compactenum}
Thus we consider the following auxiliary problem 
\begin{equation}\label{Pe}
(-\Delta)^{s} u + V(\e x)u =  \left(\frac{1}{|x|^{\mu}}*G(\e x, u)\right)g(\e x, u) \mbox{ in } \R^{N}
\end{equation}
and we note that if $u$ is a solution of (\ref{Pe}) such that 
\begin{equation}\label{ue}
u(x)<a \mbox{ for all } x\in \R^{N}\setminus \Lambda_{\e},
\end{equation}
where $\Lambda_{\e}:=\{x\in \R^{N}: \e x\in \Lambda\}$, then $u$ solves (\ref{R}), in view of the definition of $g$.\\
It is clear that, weak solutions to (\ref{Pe}) are critical points of the $C^{1}$-functional $J_{\e}: H^{s}_{\e}\rightarrow \R$ defined by
$$
J_{\e}(u)=\frac{1}{2}\|u\|^{2}_{\e}-\Sigma_{\e}(u)
$$
where 
$$
\Sigma_{\e}(u)=\frac{1}{2}\int_{\R^{N}} \left(\frac{1}{|x|^{\mu}}*G(\e x, u)\right)G(\e x, u)\, dx.
$$
We begin proving that $J_{\e}$ satisfies the assumptions of the mountain pass theorem \cite{AR}. 
\begin{lem}\label{MPG}
$J_{\e}$ has a mountain pass geometry, that is
\begin{compactenum}[(i)]
\item there exist $\alpha, \rho>0$ such that $J_{\e}(u)\geq \alpha$ for any $u\in H^{s}_{\e}$ such that $\|u\|_{\e}=\rho$;
\item there exists $e\in H^{s}_{\e}$ with $\|e\|_{\e}>\rho$ such that $J_{\e}(e)<0$.
\end{compactenum}
\end{lem}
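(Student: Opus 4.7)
The plan is to verify the two mountain-pass conditions using, respectively, the subcritical growth estimates $(g_1)$-$(g_2)$ together with the Hardy-Littlewood-Sobolev inequality for (i), and the Ambrosetti-Rabinowitz condition $(f_3)$ for (ii) (which controls $F$ from below on the penalization-free zone $\Lambda$).

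For (i), I would first use $(g_1)$-$(g_2)$, both uniform in $x$, to obtain, for any $\xi>0$, the pointwise bound
$$|G(x,t)|\leq \xi\,t^2+C_\xi\,|t|^q \qquad \text{for all } (x,t)\in\R^N\times\R.$$
Setting $\tau:=\frac{2N}{2N-\mu}$ so that $\frac{2}{\tau}+\frac{\mu}{N}=2$, an application of Theorem~\ref{HLS} with both exponents equal to $\tau$ yields
$$\Sigma_\e(u)\leq C\,\|G(\e\cdot,u)\|_{L^\tau(\R^N)}^{2},$$
and plugging in the pointwise bound reduces the estimate to controlling $\|u\|_{L^{2\tau}}^2$ and $\|u\|_{L^{q\tau}}^q$. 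It is essential here that both exponents land in the Sobolev range: $2\tau=\frac{4N}{2N-\mu}\in[2,\p]$ holds because $\mu<2s$, while $q\tau=\frac{2Nq}{2N-\mu}\in[2,\p]$ is precisely the upper bound $q<\frac{\p}{2}(2-\frac{\mu}{N})$ of $(f_2)$. Theorem~\ref{Sembedding} then gives $\|u\|_{L^{2\tau}},\|u\|_{L^{q\tau}}\leq C\|u\|_\e$, so that
$$J_\e(u)\geq \frac{1}{2}\|u\|_\e^{2}-C\,\|u\|_\e^{4}-C\,\|u\|_\e^{2q}.$$
Since $4$ and $2q$ both strictly exceed $2$, choosing $\rho>0$ small enough makes the right-hand side $\geq\alpha>0$ on the sphere $\{\|u\|_\e=\rho\}$.

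For (ii), I would pick a nonnegative $\phi\in C_0^\infty(\Lambda_\e)\setminus\{0\}$, which exists since $\Lambda_\e:=\{x\in\R^N:\e x\in\Lambda\}$ is a nonempty bounded open set. On $\supp\phi$ one has $G(\e x,\cdot)=F$, so
$$J_\e(t\phi)=\frac{t^2}{2}\|\phi\|_\e^{2}-\frac{1}{2}\iint_{\R^{2N}}\frac{F(t\phi(x))\,F(t\phi(y))}{|x-y|^\mu}\,dx\,dy.$$
From $(f_3)$, the quotient rule gives $(F(t)/t^2)'=(f(t)t-2F(t))/t^3\geq 0$ for $t>0$, so $F(t)/t^2$ is nondecreasing and hence $F(t)\geq F(1)\,t^2$ for every $t\geq 1$ (with $F(1)>0$ by the strict lower bound in $(f_3)$). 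Fixing a set $\Omega'\subset\supp\phi$ of positive finite measure on which $\phi\geq c_0>0$, for every $t\geq 1/c_0$ one has $F(t\phi(x))\geq F(1)\,c_0^{2}\,t^2$ on $\Omega'$. Since $\mu<N$ and $\Omega'$ is bounded, the constant $K:=\iint_{\Omega'\times\Omega'}|x-y|^{-\mu}\,dx\,dy$ is finite and strictly positive, so that
$$J_\e(t\phi)\leq \frac{t^2}{2}\|\phi\|_\e^{2}-\frac{F(1)^{2}\,c_0^{4}\,K}{2}\,t^{4}\longrightarrow -\infty \text{ as } t\to\infty,$$
and (ii) follows by taking $e:=t\phi$ with $t$ sufficiently large.

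The only real subtlety is the exponent bookkeeping in (i): the chain HLS $\to$ $L^\tau$ bound on $G(\e\cdot,u)$ $\to$ $L^{2\tau}$ and $L^{q\tau}$ norms of $u$ $\to$ fractional Sobolev embedding must produce powers of $\|u\|_\e$ strictly above $2$, which is exactly where the upper restriction on $q$ in $(f_2)$ (highlighted right after the statement of Theorem~\ref{thmf}) is consumed. Everything else is the standard mountain-pass computation.
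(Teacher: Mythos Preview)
Your proof is correct and, for part (i), essentially identical to the paper's: both apply Hardy--Littlewood--Sobolev with the symmetric exponent $\tau=\frac{2N}{2N-\mu}$, bound $G(\e\cdot,u)$ in $L^\tau$ via $(g_1)$--$(g_2)$, and then invoke the embedding $H^s_\e\hookrightarrow L^{2\tau}\cap L^{q\tau}$ (the latter being exactly where the upper bound on $q$ in $(f_2)$ enters) to obtain
$$J_\e(u)\geq \tfrac{1}{2}\|u\|_\e^{2}-C\bigl(\|u\|_\e^{4}+\|u\|_\e^{2q}\bigr).$$

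For part (ii) the two arguments diverge in implementation, though both produce the same $t^{4}$ growth of $\Sigma_\e(t\phi)$ against the $t^{2}$ quadratic term. The paper works with the full nonlinear functional: setting $h(t)=\Sigma_\e(tu_0/\|u_0\|_\e)$ for a fixed $u_0$ supported in $\Lambda_\e$, the Ambrosetti--Rabinowitz condition $(f_3)$ gives the differential inequality $h'(t)>\tfrac{4}{t}h(t)$, which integrates to $\Sigma_\e(tu_0)\geq C\,t^{4}$. You instead extract the pointwise consequence of $(f_3)$ that $F(t)/t^{2}$ is nondecreasing, hence $F(t)\geq F(1)\,t^{2}$ for $t\geq 1$, and feed this directly into the convolution on a sublevel set where $\phi\geq c_0$. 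Your route is more elementary and avoids the ODE integration, at the cost of throwing away part of the support of $\phi$; the paper's route is a bit cleaner bookkeeping-wise and uses the full factor $4$ in $(f_3)$ rather than just $2$. Either way the conclusion $J_\e(t\phi)\leq C_1t^{2}-C_2t^{4}\to-\infty$ follows.
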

\begin{proof}
From $(g_1)$ and $(g_2)$, it follows that that for any $\eta>0$ there exists $C_{\eta}>0$ such that
\begin{equation}\label{g-estimate}
|g(\e x, t)|\leq \eta |t|+C_{\eta} |t|^{q-1}.
\end{equation}
By using Theorem \ref{HLS} and \eqref{g-estimate}, we get
\begin{align}\label{a1}
\left|\int_{\R^{N}} \left(\frac{1}{|x|^{\mu}}*G(\e x, u)\right)G(\e x, u)\, dx\right|\leq C\|G(\e x, u)\|_{L^{t}(\R^{N})} \|G(\e x, u)\|_{L^{t}(\R^{N})} \leq C\left(\int_{\R^{N}} (|u|^{2}+|u|^{q}\, dx)^{t}\right)^{\frac{2}{t}},
\end{align}
where $\frac{1}{t}=\frac{1}{2}(2-\frac{\mu}{N})$. Since $2<q<\frac{2^{*}_{s}}{2}(2-\frac{\mu}{N})$, we can see that $t q\in (2, 2^{*}_{s})$, and from Theorem \ref{Sembedding},  we have
\begin{align}\label{a2}
\left(\int_{\R^{N}} (|u|^{2}+|u|^{q}\, dx)^{t}\right)^{\frac{2}{t}}\leq C(\|u\|^{2}_{\e}+\|u\|^{q}_{\e})^{2}.
\end{align}
Taking into account \eqref{a1} and \eqref{a2} we can deduce that
\begin{align*}
\left|\int_{\R^{N}} \left(\frac{1}{|x|^{\mu}}*G(\e x, u)\right)G(\e x, u)\, dx\right|\leq C(\|u\|^{2}_{\e}+\|u\|^{q}_{\e})^{2}\leq C(\|u\|^{4}_{\e}+\|u\|^{2q}_{\e}).
\end{align*}
As a consequence
$$
J(u)\geq \frac{1}{2}\|u\|^{2}_{\e}-C(\|u\|^{4}_{\e}+\|u\|^{2q}_{\e}),
$$
and being $q>2$ we can see that $(i)$ holds.
Fix a positive function $u_{0}\in H^{s}(\R^{N})\setminus\{0\}$ with $supp(u_{0})\subset \Lambda_{\e}$, and we set
$$
h(t)=\Sigma_{\e}\left(\frac{t u_{0}}{\|u_{0}\|_{\e}}\right) \mbox{ for } t>0.
$$
Since $G(\e x, u_{0})=F(u_{0})$ and by using $(f_3)$, we deduce that
\begin{align}\label{a3}
h'(t)&=\Sigma_{\e}'\left(\frac{t u_{0}}{\|u_{0}\|_{\e}}\right) \frac{u_{0}}{\|u_{0}\|_{\e}} \nonumber \\
&=\int_{\R^{N}} \left(\frac{1}{|x|^{\mu}}*F\left(\frac{t u_{0}}{\|u_{0}\|_{\e}}\right)  \right) f\left(\frac{t u_{0}}{\|u_{0}\|_{\e}}\right)\frac{u_{0}}{\|u_{0}\|_{\e}}\, dx\nonumber \\
&=\frac{4}{t}\int_{\R^{N}} \frac{1}{2} \left(\frac{1}{|x|^{\mu}}*F\left(\frac{t u_{0}}{\|u_{0}\|_{\e}}\right)  \right) \frac{1}{2}  f\left(\frac{t u_{0}}{\|u_{0}\|_{\e}}\right)\frac{t u_{0}}{\|u_{0}\|_{\e}}\, dx \nonumber\\
&>\frac{4}{t} h(t).
\end{align}
Integrating \eqref{a3} on $[1, t\|u_{0}\|_{\e}]$ with $t>\frac{1}{\|u_{0}\|_{\e}}$, we find
$$
h(t\|u_{0}\|_{\e})\geq h(1)(t\|u_{0}\|_{\e})^{4}
$$
which gives
$$
\Sigma_{\e}(t u_{0})\geq \Sigma_{\e}\left(\frac{u_{0}}{\|u_{0}\|_{\e}}\right) \|u_{0}\|_{\e}^{4}t^{4}.
$$
Therefore, we have
$$
J_{\e}(t u_{0})= \frac{t^{2}}{2}\|u_{0}\|_{\e}^{2}-\Sigma_{\e}(t u_{0})\leq C_{1} t^{2}-C_{2}t^{4} \mbox{ for } t>\frac{1}{\|u_{0}\|_{\e}}.
$$
Taking $e=t u_{0}$ with $t$ sufficiently large, we can see that $(ii)$ holds.
\end{proof}

\noindent
Since $supp(u_{0})\subset \Lambda_{\e}$, there exists $\kappa>0$ independent of $\e, l, a$ such that 
$$
c_{\e}= \inf_{u\in H^{s}_{\e}\setminus \{0\}}\max_{t\geq 0} J_{\e}(tu) <\kappa.
$$
Now, let us define
$$
\B=\{u\in H^{s}(\R^{N}): \|u\|^{2}_{\e}\leq 4(\kappa+1)\}
$$
and we set
$$
\tilde{K}_{\e}(u)(x)=\frac{1}{|x|^{\mu}}*G(\e x, u).
$$
We prove the following useful lemma.
\begin{lem}\label{lemK}
Assume that $(f_1)$-$(f_3)$ hold and $2<q<\frac{2(N-\mu)}{N-2s}$. Then there exists $\ell_{0}>0$ such that 
$$
\frac{\sup_{u\in \B} \|\tilde{K}_{\e}(u)(x)\|_{L^{\infty}(\R^{N})}}{\ell_{0}}<\frac{1}{2} \mbox{ for any } \e>0.
$$
\end{lem}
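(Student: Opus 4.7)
The plan is to establish a uniform (in $\e>0$) bound $\|\tilde{K}_\e(u)\|_{L^\infty(\R^N)} \leq C_0$ for every $u\in\B$, and then simply pick $\ell_0 > 2 C_0$. Since membership in $\B$ controls $\|u\|_\e$ by a constant independent of $\e$, and Sobolev embedding (Theorem \ref{Sembedding}) turns this into $L^p$-bounds for $p\in[2,2^*_s]$, the task reduces to a pointwise estimate of the convolution.

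First, I would fix $x\in\R^N$ and split
\[
\tilde{K}_\e(u)(x)=\int_{|x-y|\leq 1}\frac{G(\e y,u(y))}{|x-y|^\mu}\,dy+\int_{|x-y|>1}\frac{G(\e y,u(y))}{|x-y|^\mu}\,dy.
\]
By integrating $(g_1)$--$(g_2)$ one gets $G(\e y, t)\leq C(t^2+|t|^q)$ uniformly in $y$. For the \emph{far} integral, the kernel is bounded by $1$, so the whole thing is bounded by $C\int_{\R^N}(u^2+|u|^q)\,dy \leq C(\|u\|_{L^2}^2+\|u\|_{L^q}^q)$, which by Theorem \ref{Sembedding} is controlled by a polynomial in $\|u\|_\e$.

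For the \emph{near} integral I would apply H\"older with a carefully chosen exponent $r>1$ and its conjugate $r'$:
\[
\int_{|x-y|\leq 1}\frac{G(\e y,u)}{|x-y|^\mu}\,dy \leq \Bigl(\int_{|z|\leq 1}|z|^{-\mu r'}\,dz\Bigr)^{1/r'}\Bigl(\int_{\R^N}G(\e y,u)^{r}\,dy\Bigr)^{1/r}.
\]
The first factor is finite iff $\mu r'<N$, i.e.\ $r>\frac{N}{N-\mu}$. The second factor is controlled by $C(\|u\|_{L^{2r}}^2+\|u\|_{L^{qr}}^{q})$, which via Theorem \ref{Sembedding} is finite as soon as $qr\leq \p=\frac{2N}{N-2s}$, and then bounded by a polynomial in $\|u\|_\e$. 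The key observation is that these two constraints $\frac{N}{N-\mu}<r\leq \frac{\p}{q}$ admit a common choice precisely when $q\,\frac{N}{N-\mu}<\p$, i.e.\ $q<\frac{\p(N-\mu)}{N}=\frac{2(N-\mu)}{N-2s}$, which is the hypothesis on $q$. So I would fix one such $r$, e.g.\ $r=\frac{1}{2}\bigl(\frac{N}{N-\mu}+\frac{\p}{q}\bigr)$.

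Combining both pieces yields $\tilde{K}_\e(u)(x)\leq C\bigl(\|u\|_\e^{2}+\|u\|_\e^{q}\bigr)$ with $C$ independent of $x,\e$ and of $u\in H^{s}_{\e}$. For $u\in\B$ we have $\|u\|_\e^{2}\leq 4(\kappa+1)$, so the right-hand side is bounded by a constant $C_0=C_0(\kappa,N,s,\mu,q)$ independent of $\e$. Setting $\ell_0:=2C_0+1$ gives the conclusion. The only delicate step is the H\"older estimate on the near region; everything else is a routine consequence of Theorem \ref{Sembedding} and the growth bounds on $g$.
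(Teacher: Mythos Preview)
Your proposal is correct and follows essentially the same route as the paper: both split the convolution at $|x-y|=1$, bound the far part by $\int_{\R^N}(|u|^2+|u|^q)$, and handle the near part by H\"older with an exponent $r>\frac{N}{N-\mu}$ so that $|z|^{-\mu r'}$ is integrable on the unit ball while $qr\leq 2^*_s$ keeps $|u|^{qr}$ in Sobolev range---the admissibility of such an $r$ being exactly the hypothesis $q<\frac{2(N-\mu)}{N-2s}$. The only cosmetic difference is that the paper uses two separate H\"older exponents (one for the $|u|^2$ piece, one for the $|u|^q$ piece), whereas you use a single $r$ for both; since $q>2$ makes the $qr\leq 2^*_s$ constraint the binding one, your simplification is harmless.
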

\begin{proof}
Let us prove that there exists $C_{0}>0$ such that 
\begin{equation}\label{a6}
\sup_{u\in \B} \|\tilde{K}_{\e}(u)(x)\|_{L^{\infty}(\R^{N})}\leq C_{0}.
\end{equation}
We observe that
\begin{equation}\label{a5}
|G(\e x, u)|\leq C(|u|^{2}+|u|^{q}) \mbox{ for all } \e>0.
\end{equation}
By using \eqref{a5}, we can see that
\begin{align}\label{a7}
|\tilde{K}_{\e}(u)(x)|&=\left| \int_{\R^{N}} \frac{G(\e x, u)}{|x-y|^{\mu}} \,dy\right| \nonumber\\
&\leq \left| \int_{|x-y|\leq 1} \frac{G(\e x, u)}{|x-y|^{\mu}} \,dy\right|+\left| \int_{|x-y|>1} \frac{G(\e x, u)}{|x-y|^{\mu}} \,dy\right| \nonumber\\
&\leq C \int_{|x-y|\leq 1} \frac{|u(y)|^{2}+|u(y)|^{q}}{|x-y|^{\mu}}\, dy+C \int_{\R^{N}} (|u|^{2}+|u|^{q})\, dy \nonumber\\
&\leq C \int_{|x-y|\leq 1} \frac{|u(y)|^{2}+|u(y)|^{q}}{|x-y|^{\mu}}\, dy+C
\end{align}
where in the last line we used Theorem \ref{Sembedding} and $\|u\|^{2}_{\e}\leq 4(\kappa+1)$.
Now, we take 
$$
t\in \left(\frac{N}{N-\mu}, \frac{N}{N-2s}\right] \mbox{ and } r\in \left(\frac{N}{N-\mu}, \frac{2N}{q(N-2s)}\right].
$$
By applying H\"older inequality and by using Theorem \ref{Sembedding} and $\|u\|^{2}_{\e}\leq 4(\kappa+1)$, we can see that
\begin{align}\label{a8}
\int_{|x-y|\leq 1} \frac{|u(y)|^{2}}{|x-y|^{\mu}}\, dy&\leq \left(\int_{|x-y|\leq 1} |u|^{2t}\, dy  \right)^{\frac{1}{t}} \left(\int_{|x-y|\leq 1} \frac{1}{|x-y|^{\frac{t\mu}{t-1}}}\, dy \right)^{\frac{t-1}{t}}\nonumber \\
&\leq C_{*}(4(\kappa+1))^{2} \left(\int_{\rho\leq 1} \rho^{N-1-\frac{t \mu}{t-1}}\, d\rho  \right)^{\frac{t-1}{t}}<\infty.
\end{align}
because of $N-1-\frac{t \mu}{t-1}>-1$.
Similarly, we get
\begin{align}\label{a9}
\int_{|x-y|\leq 1} \frac{|u(y)|^{q}}{|x-y|^{\mu}}\, dy&\leq \left(\int_{|x-y|\leq 1} |u|^{rq}\, dy  \right)^{\frac{1}{r}} \left(\int_{|x-y|\leq 1} \frac{1}{|x-y|^{\frac{r\mu}{r-1}}}\, dy \right)^{\frac{r-1}{r}}\nonumber \\
&\leq C_{*}(4(\kappa+1))^{q} \left(\int_{\rho\leq 1} \rho^{N-1-\frac{r \mu}{r-1}}\, d\rho  \right)^{\frac{r-1}{r}}<\infty
\end{align}
in view of $N-1-\frac{r \mu}{r-1}>-1$.
Putting together \eqref{a8} and \eqref{a9} we can see that
$$
\int_{|x-y|\leq 1} \frac{|u(y)|^{2}+|u(y)|^{q}}{|x-y|^{\mu}}\, dy\leq C \mbox{ for all } x\in \R^{N}
$$
which in view of \eqref{a7} yields \eqref{a6}.
Then there exists $\ell_{0}>0$ such that
$$
\frac{\sup_{u\in \B} \|\tilde{K}_{\e}(u)(x)\|_{L^{\infty}(\R^{N})}}{\ell_{0}}\leq \frac{C_{0}}{\ell_{0}}< \frac{1}{2}.
$$
\end{proof}

\noindent
Then, we take $a>0$ the unique number such that 
$$
\frac{f(a)}{a}=\frac{V_{0}}{\ell_{0}}
$$
and we consider the penalized problem \eqref{Pe} with these choices.
\noindent
Now, let us introduce the Nehari manifold associated to (\ref{Pe}), that is
\begin{equation*}
\mathcal{N}_{\e}:= \{u\in H^{s}_{\e} \setminus \{0\} : \langle J_{\e}'(u), u \rangle =0\}.
\end{equation*}
By using Theorem \ref{HLS} and $(g_1)$-$(g_2)$, we can see that for all $u\in \mathcal{N}_{\e}$
$$
\|u\|^{2}_{\e}\leq C(\|u\|_{\e}^{4}+\|u\|_{\e}^{2q}),
$$
so there exists $r>0$ such that 
\begin{equation}\label{BBN}
\|u\|_{\e}\geq r \mbox{ for all } u\in \mathcal{N}_{\e}, \e>0.
\end{equation}
Let us denote by $\mathbb{S}_{\e}$ the unitary sphere in $H^{s}_{\e}$.
Since $f$ is only continuous, the next two results will play a fundamental role
to overcome the non-differentiability of $\mathcal{N}_{\e}$.
\begin{lem}\label{lemz1}
Suppose that $V$ satisfies $(V_1)$-$(V_2)$ and $f$ verifies $(f_1)$-$(f_4)$. \\
Then, the following facts hold true:
\begin{compactenum}
\item[$(a)$] 
For any $u\in H^{s}_{\e}\setminus\{0\}$, let $h_{u}: \R_{+} \rightarrow \R$ be defined by $h_{u}(t):= J_{\e}(t u)$. Then, there is a unique $t_{u}>0$ such that $h_{u}'(t)>0$ in $(0, t_{u})$ and $h_{u}'(t)<0$ in $(t_{u}, \infty)$.
\item[$(b)$] There is $\tau>0$, independent on $u$, such that $t_{u}\geq \tau$ for every $u\in \mathbb{S}_{\e}$. Moreover, for each compact set $\mathcal{W}\subset \mathbb{S}_{\e}$, there is $C_{\mathcal{W}}>0$ such that $t_{u}\leq C_{\mathcal{W}}$ for every $u\in \mathcal{W}$.
\item[$(c)$] The map $\hat{m}_{\e}: H^{s}_{\e}\setminus\{0\}\rightarrow \mathcal{N}_{\e}$ given by $\hat{m}_{\e}(u):=t_{u}u$ is continuous and $m_{\e}:= \hat{m}|_{\mathbb{S}_{\e}}$ is a homeomorphism between $\mathbb{S}_{\e}$ and $\mathcal{N}_{\e}$. Moreover, $m^{-1}_{\e}(u)= \frac{u}{\|u\|_{\e}}$. 
\end{compactenum}
\end{lem}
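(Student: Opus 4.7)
This is the standard Szulkin--Weth fibering argument: along each ray $t\mapsto tu$ the functional $J_\e$ has mountain-pass shape, its unique positive maximum gives $t_uu\in\mathcal N_\e$, and the homogeneous dependence of that maximum on $u$ produces the homeomorphism $\mathbb S_\e\cong\mathcal N_\e$.

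For $(a)$, I would examine the fibering map $h_u(t)=\tfrac{t^{2}}{2}\|u\|_\e^{2}-\Sigma_\e(tu)$. Repeating the estimates used in Lemma~\ref{MPG}$(i)$ gives $h_u(t)>0$ and $h_u'(t)>0$ for $t>0$ small. To force $h_u(t)\to-\infty$, I split cases on whether $u^{+}$ is essentially supported in $\Lambda_\e$: if yes, the AR computation of Lemma~\ref{MPG}$(ii)$ based on $(f_{3})$ yields $\Sigma_\e(tu)\ge Ct^{4}$; if no, the explicit tail $\tilde f(t)=\tfrac{V_{0}}{\ell_{0}}t$ for $t>a$ gives $G(\e x,s)\ge cs^{2}$ for $x\notin\Lambda_\e$ and $s$ large, so still $\Sigma_\e(tu)\ge Ct^{4}$. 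Hence $h_u$ attains a positive maximum at some $t_u>0$. For uniqueness and the stated sign of $h_u'$, I rewrite
\[
\frac{h_u'(t)}{t}=\|u\|_\e^{2}-\int_{\R^{N}}\!\left(\int_{\R^{N}}\frac{G(\e x,tu(x))}{|x-y|^{\mu}}\,dx\right)\frac{g(\e y,tu(y))}{tu(y)}\,u(y)^{2}\,dy,
\]
and invoke $(g_{4})$: the inner convolution is strictly increasing in $t$ (since $g(x,\cdot)>0$ on $(0,\infty)$), while $s\mapsto g(x,s)/s$ is non-decreasing, so the right-hand side is strictly decreasing in $t$ and vanishes exactly once.

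For $(b)$, the lower bound follows from \eqref{BBN}: for $u\in\mathbb S_\e$ one has $t_u=\|t_uu\|_\e\ge r$, so $\tau=r$ works. For the upper bound, I argue by contradiction: if $u_n\in\mathcal W$ with $t_n:=t_{u_n}\to\infty$, compactness of $\mathcal W$ yields $u_n\to u$ (up to a subsequence) and the maximum property gives $J_\e(t_nu_n)\ge h_{u_n}(t_u)\to h_u(t_u)>0$. On the other hand, writing
\[
\frac{J_\e(t_nu_n)}{t_n^{4}}=\frac{1}{2t_n^{2}}-\frac{1}{2}\iint_{\R^{2N}}\frac{1}{|x-y|^{\mu}}\cdot\frac{G(\e x,t_nu_n(x))}{t_n^{2}}\cdot\frac{G(\e y,t_nu_n(y))}{t_n^{2}}\,dx\,dy
\]
and using $G(\e x,tu_n)/t^{2}\ge G(\e x,Tu_n)/T^{2}$ for $t\ge T$ (a consequence of $2G\le gt$ in $(g_{3})$), Fatou delivers a strictly positive liminf of the double integral on a positive-measure subset of $\{u>0\}$; hence $J_\e(t_nu_n)\to-\infty$, contradicting the previous lower bound.

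For $(c)$, homogeneity together with uniqueness in $(a)$ gives $\hat m_\e(u)=m_\e(u/\|u\|_\e)$, reducing continuity of $\hat m_\e$ to that of $m_\e$ on $\mathbb S_\e$. If $v_n\to v$ in $\mathbb S_\e$, part $(b)$ applied to $\{v_n\}\cup\{v\}$ keeps $t_{v_n}\in[\tau,C]$; along any subsequence with $t_{v_n}\to t^{\ast}$, continuity of $J_\e'$ passes $\langle J_\e'(t_{v_n}v_n),t_{v_n}v_n\rangle=0$ to the limit, and uniqueness from $(a)$ forces $t^{\ast}=t_v$, so the whole sequence converges. The inverse map $\mathcal N_\e\ni u\mapsto u/\|u\|_\e$ is manifestly continuous, completing the homeomorphism. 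The main obstacle I anticipate is propagating the monotonicities from $(g_{3})$--$(g_{4})$ cleanly through the double convolution in both $(a)$ and $(b)$; once this is done, the remainder follows the routine Szulkin--Weth template.
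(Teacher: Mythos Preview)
Your proposal is correct and follows the same Szulkin--Weth fibering scheme as the paper. The only tactical differences are in $(b)$: for the lower bound the paper estimates $h_u'(t_u)=0$ directly to get $t_u^2\le C(t_u^4+t_u^{2q})$, whereas you invoke \eqref{BBN}; for the upper bound the paper records the Nehari coercivity $J_\e(v)\ge\tfrac14\|v\|_\e^2$ on $\mathcal N_\e$ and pairs it with the (asserted) fact that $J_\e(t_{u_n}u_n)\to-\infty$, while your Fatou argument on $J_\e(t_nu_n)/t_n^4$ supplies exactly that divergence---so your version in fact fills in a detail the paper leaves implicit.
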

\begin{proof}
$(a)$ From the proof of Lemma \ref{MPG} we can see that $h_{u}(0)=0$, $h_{u}(t)>0$ for $t$ small and $h_{u}(t)<0$ for $t$ large.
Then, by the continuity of $h_{u}$, it is easy to see that there exists $t_{u}>0$ such that $\max_{t\geq 0} h_{u}(t)=h_{u}(t_{u})$, $t_{u}u\in \mathcal{N}_{\e}$ and $h_{u}'(t_{u})=0$. \\
Now, we note that 
\begin{align*}
t u\in \mathcal{N}_{\e} \Longleftrightarrow  \|u\|^{2}_{\e}=\int_{\R^{N}} \left(\frac{1}{|x|^{\mu}}*\frac{G(\e x, t u)}{t} \right) g(\e x, tu) u\, dx,
\end{align*}
so, by using $(g_4)$, we get the uniqueness of a such $t_{u}$. 

$(b)$ Let $u\in \mathbb{S}_{\e}$. Recalling that $h'_{u}(t_{u})=0$, and using $(g_1)$-$(g_2)$, Theorem \ref{HLS} (see estimates in Lemma \ref{lemMPG}), and Theorem \ref{Sembedding}, we get for any $\xi>0$ small 
$$
t_{u}^{2}=\int_{\R^{N}} \tilde{K}_{\e}(t_{u}u) g(\e x, t_{u} u)t_{u}u \, dx\leq  \xi C_{1} t_{u}^{4}+C_{2} C_{\xi} t_{u}^{2q}.
$$
Being $q>2$, there exists $\tau>0$ independent of $u$, such that $t_{u}\geq \tau$.
Now, by using $(g_3)$, we can observe that 
\begin{align}\label{absurd}
J_{\e}(v)&=J_{\e}(v)-\frac{1}{4}\langle J'_{\e}(v), v\rangle \nonumber \\
&=\frac{1}{4}\|v\|_{\e}^{2}-\frac{1}{4}\int_{\R^{N}} \left(\frac{1}{|x|^{\mu}}*G(\e x, u) \right)[2G(\e x, u)-g(\e x, u)u]\, dx\nonumber\\
&\geq \frac{1}{4}\|v\|_{\e}^{2} \quad \mbox{ for any } v\in \mathcal{N}_{\e}.
\end{align}
Hence, if $\mathcal{W}\subset \mathbb{S}_{\e}$ is a compact set, and $(u_{n})\subset \mathcal{W}$ is such that $t_{u_{n}}\rightarrow \infty$, it follows that $u_{n}\rightarrow u$ in $H^{s}_{\e}$, and $J_{\e}(t_{u_{n}}u_{n})\rightarrow -\infty$. Taking $v_{n}=t_{u_{n}}u_{n}\in \mathcal{N}_{\e}$ in (\ref{absurd}), we can see that 
$$
0<\frac{1}{4}\leq \frac{J_{\e}(t_{u_{n}}u_{n})}{t^{2}_{u_{n}}}\leq 0 \mbox{ as } n\rightarrow \infty
$$
which gives a contradiction. \\
$(c)$ Since $(a)$ and $(b)$ hold, we can apply Proposition $8$ in \cite{SW} to deduce the thesis.
\end{proof}

\begin{remark}
From the estimates in $(b)$, we can deduce that $J_{\e}$ is coercive on $\mathcal{N}_{\e}$, because  for all $u\in \mathcal{N}_{\e}$
$$
J_{\e}(u)\geq \frac{1}{4}\|u\|^{2}_{\e}\rightarrow \infty \mbox{ as } \|u\|_{\e}\rightarrow \infty.
$$
Taking into account the above estimate and \eqref{BBN}, we can also see that $J_{\e}|_{\mathcal{N}_{\e}}$ is bounded below by some positive constant.
\end{remark}

\noindent
Let us define the maps $\hat{\psi}_{\e}: H^{s}_{\e}\setminus\{0\} \rightarrow \R$ by $\hat{\psi}_{\e}(u):= J_{\e}(\hat{m}_{\e}(u))$, and $\psi_{\e}:=\hat{\psi}|_{\mathbb{S}_{\e}}$. 
The next result is a consequence of Lemma \ref{lemz1}. For more details, see Proposition $9$ and Corollary $10$ in \cite{SW}.
\begin{prop}\label{propz2}
Suppose that $V$ satisfies $(V_1)$-$(V_2)$ and $f$ verifies $(f_1)$-$(f_4)$. Then, one has:
\begin{compactenum}
\item[$(a)$] $\hat{\psi}_{\e}\in C^{1}(H^{s}_{\e}\setminus\{0\}, \R)$ and
\begin{equation*}
\langle \hat{\psi}_{\e}'(u), v \rangle=\frac{\|\hat{m}_{\e}(u)\|_{\e}}{\|u\|_{\e}} \langle J_{\e}'(\hat{m}_{\e}(u)), v \rangle \,,
\end{equation*}
for every $u\in H^{s}_{\e}\setminus\{0\}$ and $v\in H^{s}_{\e}$;
\item[$(b)$] $\psi_{\e} \in C^{1}(\mathbb{S}_{\e}, \R)$ and $\langle \psi'_{\e}(u), v \rangle = \|m_{\e}(u)\|_{\e} \langle J_{\e}'(m_{\e}(u)), v \rangle$, for every $v\in T_{u}\mathbb{S}_{\e}$.
\item[$(c)$] If $(u_{n})$ is a $(PS)_{d}$ sequence for $\psi_{\e}$, then $(m_{\e}(u_{n}))$ is a $(PS)_{d}$ sequence for $J_{\e}$. Moreover, if $(u_{n})\subset \mathcal{N}_{\e}$ is a bounded $(PS)_{d}$ sequence for $J_{\e}$, then $(m_{\e}^{-1} (u_{n}))$ is a $(PS)_{d}$ sequence for the functional $\psi_{\e}$;
\item[$(d)$] $u$ is a critical point of $\psi_{\e}$ if and only if $m_{\e}(u)$ is a nontrivial critical point for $J_{\e}$. Moreover, the corresponding critical values coincide and
\begin{equation*}
\inf_{u\in\mathbb{S}_{\e}} \psi_{\e}(u) = \inf_{u\in\mathcal{N}_{\e}} J_{\e}(u).
\end{equation*}
\end{compactenum}
\end{prop}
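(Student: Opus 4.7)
The plan is to apply the Szulkin--Weth abstract framework directly: Lemma \ref{lemz1} already verifies the hypotheses needed for their Proposition~9 and Corollary~10, so my proof would mirror that argument in the present setting. Specifically, $J_{\e} \in C^{1}(H^{s}_{\e},\R)$, the Nehari manifold $\mathcal{N}_{\e}$ is bounded away from $0$ by \eqref{BBN}, $J_{\e}|_{\mathcal{N}_{\e}}$ is coercive, and for each $u \in H^{s}_{\e}\setminus\{0\}$ there is a unique maximizer $t_u$ of $h_u$; these are exactly the ingredients feeding the abstract machinery.

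For part $(a)$, the key step is to prove that $u \mapsto t_u$ is of class $C^{1}$ on $H^{s}_{\e}\setminus\{0\}$. I would apply the implicit function theorem to the defining relation $\Phi(u,t) := \langle J'_{\e}(tu), u\rangle = 0$ at the point $(u,t_u)$. The partial derivative $\partial_t \Phi(u,t_u)$ is nonzero because, by $(g_4)$, the map $t \mapsto g(\e x,t)/t$ is strictly increasing, so a differentiation of the Nehari identity together with $(g_3)$ yields $\partial_t \Phi(u,t_u) < 0$ (this is the standard computation showing $t_u$ is a strict maximum). Once $t_u$ depends $C^{1}$ on $u$, I differentiate $\hat\psi_{\e}(u) = J_{\e}(t_u u)$ in a direction $v \in H^{s}_{\e}$, obtaining
\begin{equation*}
\langle \hat\psi'_{\e}(u), v\rangle = \langle J'_{\e}(t_u u), D_u(t_u)[v]\, u + t_u v\rangle = t_u \langle J'_{\e}(t_u u), v\rangle,
\end{equation*}
where the term involving $D_u(t_u)[v]$ drops out because $t_u u \in \mathcal{N}_{\e}$ means $\langle J'_{\e}(t_u u), u\rangle = 0$. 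Rewriting $t_u = \|\hat m_{\e}(u)\|_{\e}/\|u\|_{\e}$ gives the claimed formula. Part $(b)$ then follows at once: $\psi_{\e} = \hat\psi_{\e}|_{\mathbb{S}_{\e}}$ is $C^{1}$ on the Hilbert manifold $\mathbb{S}_{\e}$, and on tangent vectors $v \in T_u \mathbb{S}_{\e}$ (i.e., $\|u\|_\e = 1$) the formula collapses to $\langle \psi'_{\e}(u), v\rangle = \|m_{\e}(u)\|_{\e}\langle J'_{\e}(m_{\e}(u)), v\rangle$.

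For part $(c)$, a Palais--Smale sequence $(u_n)$ for $\psi_{\e}$ transfers to a $(PS)_d$ sequence for $J_{\e}$ along $m_{\e}(u_n) = t_{u_n} u_n$ by using (b) and the two-sided bounds on $t_{u_n}$ coming from Lemma \ref{lemz1}(b) together with the coercivity noted in the remark (which prevents $t_{u_n} \to \infty$ on a $(PS)_d$ sequence); conversely, a bounded $(PS)_d$ sequence in $\mathcal{N}_{\e}$ is pushed back by $m_{\e}^{-1}(u) = u/\|u\|_\e$, whose derivative is uniformly controlled on the set $\|u\|_\e \geq r$. The critical-point correspondence in $(d)$ is an immediate consequence of the derivative formula in $(a)$--$(b)$ together with the fact that $\hat m_{\e}(u) \neq 0$. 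Finally, the identity $\psi_{\e}(u) = J_{\e}(m_{\e}(u))$ combined with $m_{\e}$ being a homeomorphism $\mathbb{S}_{\e} \to \mathcal{N}_{\e}$ yields $\inf_{\mathbb{S}_{\e}}\psi_{\e} = \inf_{\mathcal{N}_{\e}} J_{\e}$. The main technical obstacle in this plan is the $C^{1}$ dependence $u \mapsto t_u$; once this is secured through $(g_4)$ and the implicit function theorem, everything else is essentially bookkeeping.
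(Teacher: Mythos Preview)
Your high-level plan---apply the Szulkin--Weth abstract results (their Proposition~9 and Corollary~10) after Lemma~\ref{lemz1} has verified the hypotheses---is exactly what the paper does; the paper in fact gives no proof beyond that citation.

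There is, however, a genuine gap in your detailed sketch of part $(a)$. You propose to obtain the $C^{1}$ dependence $u\mapsto t_u$ by applying the implicit function theorem to $\Phi(u,t)=\langle J'_{\e}(tu),u\rangle$. But $\Phi$ is built from $J'_{\e}$, so differentiating $\Phi$ in $t$ requires $J_{\e}\in C^{2}$, which in turn needs $f\in C^{1}$. A central point of this paper is that $f$ is merely continuous, so $J_{\e}$ is only $C^{1}$ and the implicit function theorem does not apply to $\Phi$. This is precisely why the paper invokes the Szulkin--Weth machinery rather than a direct Nehari-manifold argument.

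The way Szulkin--Weth circumvent this is worth noting: they do \emph{not} prove that $t_u$ is $C^{1}$. They prove the differentiability of $\hat\psi_{\e}$ directly by squeezing. Since $t_u$ maximizes $t\mapsto J_{\e}(tu)$, for small $s$ one has
\[
J_{\e}(t_{u+sv}(u+sv)) - J_{\e}(t_{u+sv}u)\;\le\;\hat\psi_{\e}(u+sv)-\hat\psi_{\e}(u)\;\le\;J_{\e}(t_{u}(u+sv)) - J_{\e}(t_{u}u),
\]
and dividing by $s$, using the mean value theorem for $J_{\e}\in C^{1}$, and the mere \emph{continuity} of $u\mapsto t_u$ (from Lemma~\ref{lemz1}) gives the derivative formula. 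Once you replace your IFT step with this argument, the rest of your outline for $(b)$--$(d)$ goes through as written.
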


\begin{remark}
As in \cite{SW}, we have the following characterization of the infimum of $J_{\e}$ on $\mathcal{N}_{\e}$:
$$
c_{\e}= \inf_{u\in\mathcal{N}_{\e}} J_{\e}(u)=\inf_{u\in H^{s}_{\e}\setminus\{0\}} \max_{t>0} J_{\e}(t u)=\inf_{u\in \mathbb{S}_{\e}} \max_{t>0} J_{\e}(t u).
$$
\end{remark}

\noindent
In the next result we show that $J_{\e}$ verifies a local compactness condition.
\begin{lem}\label{PSc}
 $J_{\e}$ satisfies the $(PS)_{c}$ condition for all $c\in [c_{\e}, \kappa]$.
\end{lem}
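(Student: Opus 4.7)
The plan is to show that any $(PS)_c$ sequence $(u_n)\subset H^s_\e$ with $c\in[c_\e,\kappa]$ admits a strongly convergent subsequence. First I would obtain $H^s_\e$-boundedness by the computation underlying \eqref{absurd}: invoking $(g_3)$ both inside and outside $\Lambda_\e$, so that $g(\e x,t)t-2G(\e x,t)\ge 0$ pointwise, one gets
\[
c+o_n(1)+o_n(1)\|u_n\|_\e \;\ge\; J_\e(u_n)-\tfrac{1}{4}\langle J_\e'(u_n),u_n\rangle \;\ge\; \tfrac{1}{4}\|u_n\|_\e^{2},
\]
which yields $\limsup_n\|u_n\|_\e^{2}\le 4c\le 4\kappa$. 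Hence $u_n\in\B$ for $n$ large, and Lemma~\ref{lemK} then supplies the decisive uniform bound $\|\tilde K_\e(u_n)\|_{L^\infty(\R^N)}\le \ell_0/2$; this is exactly where the restriction $c\le\kappa$ enters. Up to a subsequence, $u_n\rightharpoonup u$ in $H^s_\e$, $u_n\to u$ in $L^r_{loc}(\R^N)$ for $r\in[2,\p)$ by Theorem~\ref{Sembedding}, and a.e.; a routine density argument using Theorem~\ref{HLS} and \eqref{g-estimate} then shows $J_\e'(u)=0$.

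The decisive step is ruling out escape of mass at infinity. Fix $R_0>0$ with $\Lambda_\e\subset B_{R_0}$ and take a smooth cutoff $\eta_R$ with $\eta_R\equiv 0$ on $B_R$, $\eta_R\equiv 1$ off $B_{2R}$, and $\|\nabla\eta_R\|_\infty\le C/R$, for $R\ge R_0$. On $\supp(\eta_R)\subset\Lambda_\e^c$ the penalized nonlinearity obeys $g(\e x,u_n)u_n\le (V_0/\ell_0)u_n^{2}$ by $(g_3)$, which combined with the pointwise bound from Lemma~\ref{lemK} gives
\[
\int_{\R^N}\tilde K_\e(u_n)\,g(\e x,u_n)\,\eta_R u_n\,dx \;\le\; \frac{V_0}{2}\int_{\R^N}\eta_R u_n^{2}\,dx \;\le\; \frac{1}{2}\int_{\R^N}V(\e x)\,\eta_R u_n^{2}\,dx.
\]
Testing $\langle J_\e'(u_n),\eta_R u_n\rangle=o_n(1)$ and absorbing the right-hand side into the quadratic form, while controlling the fractional commutator
\[
\iint_{\R^{2N}}\frac{(u_n(x)-u_n(y))\,u_n(y)(\eta_R(x)-\eta_R(y))}{|x-y|^{N+2s}}\,dx\,dy
\]
by a standard $o_R(1)$ estimate relying on $\|\nabla\eta_R\|_\infty\le C/R$ and the boundedness of $[u_n]$, the conclusion is $\limsup_n\int_{|x|>2R}u_n^{2}\,dx\to 0$ as $R\to\infty$.

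Combining this tightness with the local strong convergence from the first paragraph and interpolating against the uniform $L^{\p}$ bound yields $u_n\to u$ in $L^r(\R^N)$ for every $r\in[2,\p)$. Writing
\[
o_n(1)=\langle J_\e'(u_n)-J_\e'(u),u_n-u\rangle=\|u_n-u\|_\e^{2}-\int_{\R^N}\bigl(\tilde K_\e(u_n)g(\e x,u_n)-\tilde K_\e(u)g(\e x,u)\bigr)(u_n-u)\,dx,
\]
splitting the last integral as $\int_{\R^N}\tilde K_\e(u_n)(g(\e x,u_n)-g(\e x,u))(u_n-u)\,dx+\int_{\R^N}(\tilde K_\e(u_n)-\tilde K_\e(u))g(\e x,u)(u_n-u)\,dx$, and applying Theorem~\ref{HLS} together with \eqref{g-estimate} to each piece, both terms are shown to be $o_n(1)$, so that $\|u_n-u\|_\e\to 0$. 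The hard part is the tightness estimate: the delicate interplay between the penalization bound $g(\e x,t)t\le(V_0/\ell_0)t^{2}$ outside $\Lambda_\e$ and the $L^\infty$ control of $\tilde K_\e(u_n)$ from Lemma~\ref{lemK} has to be calibrated so that the convolution contribution is absorbed by $\int V(\e x)u_n^{2}\,dx$, and the nonlocal cutoff commutator must be tamed uniformly in $n$.
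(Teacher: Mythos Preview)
Your outline is correct and follows the paper's two-step scheme: boundedness from $(g_3)$ (so that $u_n\in\B$ for large $n$ and Lemma~\ref{lemK} applies), tightness by testing $\langle J_\e'(u_n),\eta_Ru_n\rangle$ and absorbing the convolution term into $\tfrac12\int V(\e x)\eta_Ru_n^2$, then strong $L^r$-convergence and a final expansion to conclude $\|u_n-u\|_\e\to0$. The one place where your write-up diverges is the fractional cutoff commutator. The paper devotes most of its Step~1 to an elaborate decomposition of $\R^{2N}$ into regions $X^1_R,X^2_R,X^3_R$ (with further sub-splittings, an auxiliary parameter $k>4$, and H\"older against $L^{2^*_s}$) in order to show
\[
\iint_{\R^{2N}}\frac{|\eta_R(x)-\eta_R(y)|^2}{|x-y|^{N+2s}}\,u_n^2(y)\,dx\,dy\longrightarrow 0,
\]
whereas you dismiss this as a ``standard $o_R(1)$'' estimate. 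Your claim is in fact correct and admits a much shorter proof than the paper's: from $|\eta_R(x)-\eta_R(y)|\le\min(2,\tfrac{C}{R}|x-y|)$ and splitting the inner integral over $|x-y|\lessgtr R$ one gets a bound $CR^{-2s}\|u_n\|_{L^2}^2$, hence $O(R^{-s})$ for the commutator after Cauchy--Schwarz, uniformly in $n$. So your route is both valid and cleaner here. The final identity you use, $\langle J_\e'(u_n)-J_\e'(u),\,u_n-u\rangle$, also differs cosmetically from the paper's expansion $\Psi_n=\langle J_\e'(u_n),u_n\rangle-\langle J_\e'(u_n),u\rangle+\cdots$, but the two are interchangeable once $J_\e'(u)=0$ is known; in either version the nonlinear remainder is killed by the uniform $L^\infty$ bound on $\tilde K_\e(u_n)$ together with the strong $L^r$-convergence you obtained from tightness.
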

\begin{proof}
Let $(u_{n})$ be a Palais-Smale sequence at the level $c$, that is $J_{\e}(u_{n})\rightarrow c$ and $J_{\e}'(u_{n})\rightarrow 0$.
We divide the proof in two main steps.\\
{\bf Step $1$}: For any $\eta>0$ there exists $R=R_{\eta}>0$ such that 
\begin{equation}\label{DF}
\limsup_{n\rightarrow \infty}\int_{\R^{N}\setminus B_{R}} |(-\Delta)^{\frac{s}{2}}u_{n}|^{2}+V(\e x)u_{n}^{2}\, dx<\eta.
\end{equation}
By using $(g_3)$, we can see that
\begin{align*}
J_{\e}(u_{n})-\frac{1}{4}\langle J'_{\e}(u_{n}), u_{n}\rangle&=\left(\frac{1}{2}-\frac{1}{4}\right)\|u_{n}\|^{2}_{H^{s}_{\e}}+\frac{1}{4} \int_{\R^{N}} \left(\frac{1}{|x|^{\mu}}*G(\e x, u_{n})\right) g(\e x, u_{n}) u_{n}\, dx \\
&-\frac{1}{2}  \int_{\R^{N}} \left(\frac{1}{|x|^{\mu}}*G(\e x, u_{n})\right) G(\e x, u_{n}) \, dx\\
&\geq \frac{1}{4} \|u_{n}\|^{2}_{\e},
\end{align*}
so there exists $n_{0}\in \N$ such that 
$$
\|u_{n}\|^{2}_{\e}\leq 4(\kappa+1) \mbox{ for all } n\geq n_{0}.
$$
Therefore, we may assume that $u_{n}\rightharpoonup u$ in $H^{s}(\R^{N})$ and $u_{n}\rightarrow u$ in $L^{r}_{loc}(\R^{N})$ for any $r\in [2, 2^{*}_{s})$. 
Moreover, by Lemma \ref{lemK}, we can deduce that
\begin{equation}\label{Kbound}
\frac{\sup_{n\geq n_{0}}\|\tilde{K}_{\e}(u_{n})(x)\|_{L^{\infty}(\R^{N})}}{\ell_{0}}\leq \frac{1}{2}.
\end{equation}
Fix $R>0$ and let $\psi_{R}\in C^{\infty}(\R^{N})$ be a function such that $\psi_{R}=0$ in $B_{R/2}$, $\psi_{R}=1$ in $B_{R}^{c}$, $\psi_{R}\in [0, 1]$ and $|\nabla \eta_{R}|\leq C/R$.
Since $(u_{n}\psi_{R})$ is bounded we can see that
\begin{align}
\int_{\R^{N}} (-\Delta)^{\frac{s}{2}} u_{n} (-\Delta)^{\frac{s}{2}} (u_{n}\psi_{R})+V(\e x)\psi_{R}u_{n}^{2} \, dx &=\langle J_{\e}'(u_{n}), u_{n}\psi_{R}\rangle+\int_{\R^{N}} \left(\frac{1}{|x|^{\mu}}*G(\e x, u_{n})\right) g(\e x, u_{n})u_{n}\psi_{R} \nonumber\\
&=o_{n}(1)+\int_{\R^{N}} \left(\frac{1}{|x|^{\mu}}*G(\e x, u_{n})\right) g(\e x, u_{n})u_{n}\psi_{R}\, dx
\end{align}
For $n\geq n_{0}$ and $\e>0$ fixed, take $R>0$ big enough such that $\Lambda_{\e}\subset B_{R/2}$.
Then, by using $(g_3)$ with $\ell_{0}$ as in Lemma \ref{lemK}, we deduce that
\begin{align*}
\int_{\R^{N}\setminus B_{R/2}} |(-\Delta)^{\frac{s}{2}}u_{n}|^{2}+V(\e x)u_{n}^{2}\, dx&\leq \int_{\R^{N}\setminus B_{R/2}} \left(\frac{1}{|x|^{\mu}}*G(\e x, u_{n})\right) g(\e x, u_{n})u_{n}\, dx+o_{n}(1) \\
&-\iint_{\R^{2N}} \frac{(u_{n}(x)-u_{n}(u))(\psi_{R}(x)-\psi_{R}(y))}{|x-y|^{N+2s}} u_{n}(y)\, dx dy\\
&\leq \int_{\R^{N}\setminus B_{R/2}}  \frac{\sup_{n\geq n_{0}}\|\tilde{K}_{\e}(u_{n})(x)\|_{L^{\infty}(\R^{N})}}{\ell_{0}} V_{0}u_{n}^{2}\, dx+o_{n}(1) \\
&-\iint_{\R^{2N}} \frac{(u_{n}(x)-u_{n}(u))(\psi_{R}(x)-\psi_{R}(y))}{|x-y|^{N+2s}} u_{n}(y)\, dx dy,
\end{align*}
which together with \eqref{Kbound} yields
\begin{align*}
\frac{1}{2}\int_{\R^{N}\setminus B_{R/2}} |(-\Delta)^{\frac{s}{2}}u_{n}|^{2}+V(\e x)u_{n}^{2}\, dx\leq o_{n}(1)-\iint_{\R^{2N}} \frac{(u_{n}(x)-u_{n}(u))(\psi_{R}(x)-\psi_{R}(y))}{|x-y|^{N+2s}} u_{n}(y)\, dx dy.
\end{align*}
Now, we note that the H\"older inequality and the boundedness of $(u_{n})$ imply that
\begin{align*}
&\left|\iint_{\R^{2N}} \frac{(u_{n}(x)-u_{n}(u))(\psi_{R}(x)-\psi_{R}(y))}{|x-y|^{N+2s}} u_{n}(y)\, dx dy\right| \\
&\leq \left( \iint_{\R^{2N}} \frac{|u_{n}(x)-u_{n}(y)|^{2}}{|x-y|^{N+2s}}\, dx dy \right)^{\frac{1}{2}} \left( \iint_{\R^{2N}} \frac{|\psi_{R}(x)-\psi_{R}(y)|^{2}}{|x-y|^{N+2s}} u_{n}^{2}(y)\, dx dy \right)^{\frac{1}{2}} \\
&\leq C \left( \iint_{\R^{2N}} \frac{|\psi_{R}(x)-\psi_{R}(y)|^{2}}{|x-y|^{N+2s}} u_{n}^{2}(y)\, dx dy \right)^{\frac{1}{2}}. 
\end{align*}
Therefore, it is enough to prove that 
$$
\lim_{R\rightarrow \infty} \limsup_{n\rightarrow \infty}\iint_{\R^{2N}} \frac{|\psi_{R}(x)-\psi_{R}(y)|^{2}}{|x-y|^{N+2s}} u_{n}^{2}(y)\, dx dy=0
$$
to conclude our first claim.

Let us note that $\R^{2N}$ can be written as 
$$
\R^{2N}=((\R^{N}\setminus B_{2R})\times (\R^{N}\setminus B_{2R})) \cup ((\R^{N}\setminus B_{2R})\times B_{2R})\cup (B_{2R}\times \R^{N})=: X^{1}_{R}\cup X^{2}_{R} \cup X^{3}_{R}.
$$
Then
\begin{align}\label{Pa1}
&\iint_{\R^{2N}}\frac{|\psi_{R}(x)-\psi_{R}(y)|^{2}}{|x-y|^{N+2s}} u_{n}^{2}(x) dx dy =\iint_{X^{1}_{R}}\frac{|\psi_{R}(x)-\psi_{R}(y)|^{2}}{|x-y|^{N+2s}} u_{n}^{2}(x) dx dy \nonumber \\
&+\iint_{X^{2}_{R}}\frac{|\psi_{R}(x)-\psi_{R}(y)|^{2}}{|x-y|^{N+2s}} u_{n}^{2}(x) dx dy+
\iint_{X^{3}_{R}}\frac{|\psi_{R}(x)-\psi_{R}(y)|^{2}}{|x-y|^{N+2s}} u_{n}^{2}(x) dx dy.
\end{align}
Now, we estimate each integral in (\ref{Pa1}).
Since $\psi_{R}=1$ in $\R^{N}\setminus B_{2R}$, we have
\begin{align}\label{Pa2}
\iint_{X^{1}_{R}}\frac{u_{n}^{2}(x) |\psi_{R}(x)-\psi_{R}(y)|^{2}}{|x-y|^{N+2s}} dx dy=0.
\end{align}
Let $k>4$. Clearly, we have
\begin{equation*}
X^{2}_{R}=(\R^{N} \setminus B_{2R})\times B_{2R} = ((\R^{2N}\setminus B_{kR})\times B_{2R})\cup ((B_{kR}\setminus B_{2R})\times B_{2R}) 
\end{equation*}
Let us observe that, if $(x, y) \in (\R^{2N}\setminus B_{kR})\times B_{2R}$, then
\begin{equation*}
|x-y|\geq |x|-|y|\geq |x|-2R>\frac{|x|}{2}. 
\end{equation*}
Therefore, taking into account $0\leq \psi_{R}\leq 1$, $|\nabla \psi_{R}|\leq \frac{C}{R}$ and applying H\"older inequality, we can see
\begin{align}\label{Pa3}
&\iint_{X^{2}_{R}}\frac{u_{n}^{2}(x)|\psi_{R}(x)-\psi_{R}(y)|^{2}}{|x-y|^{N+2s}} dx dy \nonumber \\
&=\int_{\R^{2N}\setminus B_{kR}} \int_{B_{2R}} \frac{u_{n}^{2}(x)|\psi_{R}(x)-\psi_{R}(y)|^{2}}{|x-y|^{N+2s}} dx dy + \int_{B_{kR}\setminus B_{2R}} \int_{B_{2R}} \frac{u_{n}^{2}(x)|\psi_{R}(x)-\psi_{R}(y)|^{2}}{|x-y|^{N+2s}} dx dy \nonumber \\
&\leq 2^{2+N+2s} \int_{\R^{N}\setminus B_{kR}} \int_{B_{2R}} \frac{u_{n}^{2}(x)}{|x|^{N+2s}}\, dxdy+ \frac{C}{R^{2}} \int_{B_{kR}\setminus B_{2R}} \int_{B_{2R}} \frac{u_{n}^{2}(x)}{|x-y|^{N+2(s-1)}}\, dxdy \nonumber \\
&\leq CR^{N} \int_{\R^{N}\setminus B_{kR}} \frac{u_{n}^{2}(x)}{|x|^{N+2s}}\, dx + \frac{C}{R^{2}} (kR)^{2(1-s)} \int_{B_{kR}\setminus B_{2R}} u_{n}^{2}(x) dx \nonumber \\
&\leq CR^{N} \left( \int_{\R^{N}\setminus B_{kR}} |u_{n}(x)|^{2^{*}_{s}} dx \right)^{\frac{2}{2^{*}_{s}}} \left(\int_{\R^{N}\setminus B_{kR}}\frac{1}{|x|^{\frac{N^{2}}{2s} +N}}\, dx \right)^{\frac{2s}{N}} + \frac{C k^{2(1-s)}}{R^{2s}} \int_{B_{kR}\setminus B_{2R}} u_{n}^{2}(x) dx \nonumber \\
&\leq \frac{C}{k^{N}} \left( \int_{\R^{N}\setminus B_{kR}} |u_{n}(x)|^{2^{*}_{s}} dx \right)^{\frac{2}{2^{*}_{s}}} + \frac{C k^{2(1-s)}}{R^{2s}} \int_{B_{kR}\setminus B_{2R}} u_{n}^{2}(x) dx \nonumber \\
&\leq \frac{C}{k^{N}}+ \frac{C k^{2(1-s)}}{R^{2s}} \int_{B_{kR}\setminus B_{2R}} u_{n}^{2}(x) dx.
\end{align}

\noindent
Now, fix $\e\in (0,1)$, and we note that
\begin{align}\label{Ter1}
&\iint_{X^{3}_{R}} \frac{u_{n}^{2}(x) |\psi_{R}(x)- \psi_{R}(y)|^{2}}{|x-y|^{N+2s}}\, dxdy \nonumber\\
&\leq \int_{B_{2R}\setminus B_{\varepsilon R}} \int_{\R^{N}} \frac{u_{n}^{2}(x) |\psi_{R}(x)- \psi_{R}(y)|^{2}}{|x-y|^{N+2s}}\, dxdy + \int_{B_{\varepsilon R}} \int_{\R^{N}} \frac{u_{n}^{2}(x) |\psi_{R}(x)- \psi_{R}(y)|^{2}}{|x-y|^{N+2s}}\, dxdy. 
\end{align}
Let us estimate the first integral in \eqref{Ter1}. Then, 
\begin{align*}
\int_{B_{2R}\setminus B_{\varepsilon R}} \int_{\R^{N} \cap \{y: |x-y|<R\}} \frac{u_{n}^{2}(x) |\psi_{R}(x)- \psi_{R}(y)|^{2}}{|x-y|^{N+2s}}\, dxdy \leq \frac{C}{R^{2s}} \int_{B_{2R}\setminus B_{\varepsilon R}} u_{n}^{2}(x) dx
\end{align*}
and 
\begin{align*}
\int_{B_{2R}\setminus B_{\varepsilon R}} \int_{\R^{N} \cap \{y: |x-y|\geq R\}} \frac{u_{n}^{2}(x) |\psi_{R}(x)- \psi_{R}(y)|^{2}}{|x-y|^{N+2s}}\, dxdy \leq \frac{C}{R^{2s}} \int_{B_{2R}\setminus B_{\varepsilon R}} u_{n}^{2}(x) dx
\end{align*}
from which we have
\begin{align}\label{Ter2}
\int_{B_{2R}\setminus B_{\varepsilon R}} \int_{\R^{N}} \frac{u_{n}^{2}(x) |\psi_{R}(x)- \psi_{R}(y)|^{2}}{|x-y|^{N+2s}}\, dxdy \leq \frac{C}{R^{2s}} \int_{B_{2R}\setminus B_{\varepsilon R}} u_{n}^{2}(x) dx. 
\end{align}
By using the definition of $\psi_{R}$, $\e\in (0,1)$, and $\psi_{R}\leq 1$, we have 
\begin{align}\label{Ter3}
\int_{B_{\varepsilon R}} \int_{\R^{N}} \frac{u_{n}^{2}(x) |\psi_{R}(x)- \psi_{R}(y)|^{2}}{|x-y|^{N+2s}}\, dxdy &= \int_{B_{\varepsilon R}} \int_{\R^{N}\setminus B_{R}} \frac{|u_{n}(x)|^{2} |\psi_{R}(x)- \psi_{R}(y)|^{2}}{|x-y|^{N+2s}}\, dxdy\nonumber \\
&\leq 4 \int_{B_{\varepsilon R}} \int_{\R^{N}\setminus B_{R}} \frac{u_{n}^{2}(x)}{|x-y|^{N+2s}}\, dxdy\nonumber \\
&\leq C \int_{B_{\varepsilon R}} u_{n}^{2}(x) dx \int_{(1-\e)R}^{\infty} \frac{1}{r^{1+2s}} dr\nonumber \\
&=\frac{C}{[(1-\e)R]^{2s}} \int_{B_{\varepsilon R}} u_{n}^{2}(x) dx
\end{align}
where we use the fact that if $(x, y) \in B_{\varepsilon R}\times (\R^{N} \setminus B_{R})$, then $|x-y|>(1-\e)R$. \\
Taking into account \eqref{Ter1}, \eqref{Ter2} and \eqref{Ter3} we deduce 
\begin{align}\label{Pa4}
\iint_{X^{3}_{R}} &\frac{u_{n}^{2}(x) |\psi_{R}(x)- \psi_{R}(y)|^{2}}{|x-y|^{N+2s}}\, dxdy \nonumber \\
&\leq \frac{C}{R^{2s}} \int_{B_{2R}\setminus B_{\varepsilon R}} |u_{n}(x)|^{2} dx + \frac{C}{[(1-\e)R]^{2s}} \int_{B_{\varepsilon R}} u_{n}^{2}(x) dx. 
\end{align}
Putting together \eqref{Pa1}, \eqref{Pa2}, \eqref{Pa3} and \eqref{Pa4}, we can infer 
\begin{align}\label{Pa5}
\iint_{\R^{2N}} &\frac{u_{n}^{2}(x) |\psi_{R}(x)-\psi_{R}(y)|^{2}}{|x-y|^{N+2s}}\, dxdy \nonumber \\
&\leq \frac{C}{k^{N}} + \frac{Ck^{2(1-s)}}{R^{2s}} \int_{B_{kR}\setminus B_{2R}} u_{n}^{2}(x) dx + \frac{C}{R^{2s}} \int_{B_{2R}\setminus B_{\varepsilon R}} |u_{n}(x)|^{2} dx + \frac{C}{[(1-\e)R]^{2s}}\int_{B_{\varepsilon R}} u_{n}^{2}(x) dx. 
\end{align}
Since $(u_{n})$ is bounded in $H^{s}(\R^{N})$, we may assume that $u_{n}\rightarrow u$ in $L^{2}_{loc}(\R^{N})$ for some $u\in H^{s}(\R^{N})$. Then, taking the limit as $n\rightarrow \infty$ in \eqref{Pa5}, we have
\begin{align*}
&\limsup_{n\rightarrow \infty} \iint_{\R^{2N}} \frac{|u_{n}(x)|^{2} |\psi_{R}(x)- \psi_{R}(y)|^{2}}{|x-y|^{N+2s}}\, dxdy\\
&\leq \frac{C}{k^{N}} + \frac{Ck^{2(1-s)}}{R^{2s}} \int_{B_{kR}\setminus B_{2R}} |u(x)|^{2} dx + \frac{C}{R^{2s}} \int_{B_{2R}\setminus B_{\varepsilon R}} |u(x)|^{2} dx + \frac{C}{[(1-\e)R]^{2s}}\int_{B_{\varepsilon R}} |u(x)|^{2} dx \\
&\leq \frac{C}{k^{N}} + Ck^{2} \left( \int_{B_{kR}\setminus B_{2R}} |u(x)|^{2^{*}_{s}} dx\right)^{\frac{2}{2^{*}_{s}}} + C\left(\int_{B_{2R}\setminus B_{\varepsilon R}} |u(x)|^{2^{*}_{s}} dx\right)^{\frac{2}{2^{*}_{s}}} + C\left( \frac{\e}{1-\e}\right)^{2s} \left(\int_{B_{\varepsilon R}} |u(x)|^{2^{*}_{s}} dx\right)^{\frac{2}{2^{*}_{s}}}, 
\end{align*}
where in the last passage we use H\"older inequality. \\
Since $u\in L^{2^{*}_{s}}(\R^{N})$, $k>4$ and $\e \in (0,1)$, we obtain
\begin{align*}
\limsup_{R\rightarrow \infty} \int_{B_{kR}\setminus B_{2R}} |u(x)|^{2^{*}_{s}} dx = \limsup_{R\rightarrow \infty} \int_{B_{2R}\setminus B_{\varepsilon R}} |u(x)|^{2^{*}_{s}} dx = 0. 
\end{align*}
Choosing $\e= \frac{1}{k}$, we get
\begin{align*}
&\limsup_{R\rightarrow \infty} \limsup_{n\rightarrow \infty} \iint_{\R^{2N}} \frac{u_{n}^{2}(x) |\psi_{R}(x)- \psi_{R}(y)|^{2}}{|x-y|^{N+2s}}\, dxdy\\
&\leq \lim_{k\rightarrow \infty} \limsup_{R\rightarrow \infty} \Bigl[\, \frac{C}{k^{N}} + Ck^{2} \left( \int_{B_{kR}\setminus B_{2R}} |u(x)|^{2^{*}_{s}} dx\right)^{\frac{2}{2^{*}_{s}}} + C\left(\int_{B_{2R}\setminus B_{\frac{1}{k} R}} |u(x)|^{2^{*}_{s}} dx\right)^{\frac{2}{2^{*}_{s}}} \\
&+ C\left(\frac{1}{k-1}\right)^{2s} \left(\int_{B_{\frac{1}{k} R}} |u(x)|^{2^{*}_{s}} dx\right)^{\frac{2}{2^{*}_{s}}}\, \Bigr]\\
&\leq \lim_{k\rightarrow \infty} \frac{C}{k^{N}} + C\left(\frac{1}{k-1}\right)^{2s} \left(\int_{\R^{N}} |u(x)|^{2^{*}_{s}} dx \right)^{\frac{2}{2^{*}_{s}}}= 0.
\end{align*}

\noindent
{\bf Step $2$}: Let us prove that $u_{n}\rightarrow u$ in $H^{s}_{\e}$ as $n\rightarrow \infty$.\\
Set $\Psi_{n}=\|u_{n}-u\|^{2}_{\e}$ and we observe that 
\begin{equation}\label{psin}
\Psi_{n}=\langle J'_{\e}(u_{n}),u_{n}\rangle-\langle J'_{\e}(u_{n}), u\rangle+\int_{\R^{N}} \left(\frac{1}{|x|^{\mu}}*G(\e x, u_{n}) \right) g(\e x, u_{n})(u_{n}-u)\, dx+o_{n}(1).
\end{equation}
Let us note that $\langle J'_{\e}(u_{n}),u_{n}\rangle=\langle J'_{\e}(u_{n}),u\rangle=o_{n}(1)$, so in view of \eqref{psin}, we need to show that 
$$
\int_{\R^{N}} \left( \frac{1}{|x|^{\mu}}*G(\e x, u_{n}) \right) g(\e x, u_{n})(u_{n}-u)\, dx=o_{n}(1),
$$
to infer that $\Psi_{n}\rightarrow 0$ as $n\rightarrow \infty$.\\
We observe that $G(\e x, u_{n})$ is bounded in $L^{\frac{2N}{2N-\mu}}(\R^{N})$ (since $q<\frac{2^{*}_{s}}{2}\left(2-\frac{\mu}{N}\right)$), $u_{n}\rightarrow u$ a.e. in $\R^{N}$, and $G$ is continuous, so we deduce that 
\begin{equation}\label{Gconv}
G(\e x, u_{n})\rightharpoonup G(\e x, u) \mbox{ in } L^{\frac{2N}{2N-\mu}}(\R^{N}).
\end{equation}
In virtue of Theorem \ref{HLS}, we know that the convolution term
$$
\frac{1}{|x|^{\mu}}*h(x)\in L^{\frac{2N}{\mu}}(\R^{N}) \mbox{ for all } h\in L^{\frac{2N}{2N-\mu}}(\R^{N})
$$
is a linear bounded operator from $L^{\frac{2N}{2N-\mu}}(\R^{N})$ to $L^{\frac{2N}{\mu}}(\R^{N})$,
so we can see that
\begin{equation}\label{wconv}
\tilde{K}_{\e}(u_{n}) =\frac{1}{|x|^{\mu}}*G(\e x, u_{n})\rightharpoonup \frac{1}{|x|^{\mu}}*G(\e x, u)=\tilde{K}_{\e}(u)  \mbox{ in } L^{\frac{2N}{\mu}}(\R^{N}).
\end{equation}
Since $g$ has a subcritical growth, by using Theorem \ref{Sembedding} and \eqref{wconv}, we obtain
\begin{equation}\label{Me1limit}
\lim_{n\rightarrow \infty} \int_{B_{R}} \tilde{K}_{\e}(u_{n}) g(\e x, u_{n})(u_{n}-u)  \, dx=0.
\end{equation}
From the growth assumption and the boundedness of $\tilde{K}_{\e}(u_{n})$ we obtain
\begin{equation*}
\int_{\R^{N}\setminus B_{R}} \tilde{K}_{\e}(u_{n}) |g(\e x, u_{n})u_{n}| \, dx\leq C_{1}\int_{\R^{N}\setminus B_{R}} u_{n}^{2}\, dx.
\end{equation*}
By the Step $1$ and Theorem \ref{Sembedding}, for any $\eta>0$ there exists $R_{\eta}>0$ such that
$$
\limsup_{n\rightarrow \infty} \int_{\R^{N}\setminus B_{R}} \tilde{K}_{\e}(u_{n}) |g(\e x, u_{n})u_{n}| \, dx\leq C_{2}\eta. 
$$
In similar way, from H\"older inequality, we can see that
$$
\limsup_{n\rightarrow \infty} \int_{\R^{N}\setminus B_{R}} \tilde{K}_{\e}(u_{n}) |g(\e x, u_{n})u| \, dx\leq C_{3}\eta. 
$$
Taking into account the above limits we can infer that  
$$
\lim_{n\rightarrow \infty} \int_{\R^{N}} \tilde{K}_{\e}(u_{n}) g(\e x, u_{n})(u_{n}-u) \, dx=0. 
$$
\end{proof}

\noindent
Finally, we prove the following result:
\begin{lem}\label{lemma2.10}
The functional $\psi_{\e}$ satisfies the $(PS)_{c}$ on $\mathbb{S}_{\e}$ for any $c\in [c_{\e}, \kappa]$.
\end{lem}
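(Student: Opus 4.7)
The plan is to deduce the Palais-Smale condition for $\psi_\e$ on $\mathbb{S}_\e$ from the already established Palais-Smale condition for $J_\e$ on $H^s_\e$ (Lemma \ref{PSc}), using the homeomorphism $m_\e:\mathbb{S}_\e\to\mathcal{N}_\e$ of Lemma \ref{lemz1}(c) and the transfer properties of Proposition \ref{propz2}(c).

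More concretely, let $(u_n)\subset\mathbb{S}_\e$ be a $(PS)_c$ sequence for $\psi_\e$ with $c\in[c_\e,\kappa]$. First I would set $v_n:=m_\e(u_n)\in\mathcal{N}_\e$. By Proposition \ref{propz2}(c), $(v_n)$ is a $(PS)_c$ sequence for $J_\e$ in $H^s_\e$, since $J_\e(v_n)=\hat\psi_\e(u_n)=\psi_\e(u_n)\to c$ and $\|J'_\e(v_n)\|\to 0$. Next, the coercivity remark following Lemma \ref{lemz1}, namely $J_\e(v)\geq\tfrac14\|v\|_\e^2$ on $\mathcal{N}_\e$, together with $J_\e(v_n)\to c$, forces $(v_n)$ to be bounded in $H^s_\e$. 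Hence Lemma \ref{PSc} applies and, up to a subsequence, $v_n\to v$ strongly in $H^s_\e$ for some $v\in H^s_\e$.

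To close the argument I would then pull this convergence back to $\mathbb{S}_\e$. By the uniform lower bound \eqref{BBN}, $\|v_n\|_\e\geq r>0$ for every $n$, so $\|v\|_\e\geq r$ and in particular $v\neq 0$; moreover, passing to the limit in $\langle J'_\e(v_n),v_n\rangle=0$ (which is continuous on bounded sets once one has strong convergence) shows $v\in\mathcal{N}_\e$. Using the explicit formula $m_\e^{-1}(w)=w/\|w\|_\e$ from Lemma \ref{lemz1}(c), we get
\begin{equation*}
u_n=m_\e^{-1}(v_n)=\frac{v_n}{\|v_n\|_\e}\longrightarrow\frac{v}{\|v\|_\e}=:u\in\mathbb{S}_\e,
\end{equation*}
which is the required strong convergence in $\mathbb{S}_\e$ for $\psi_\e$.

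The main potential obstacle is that the strong limit $v$ of $(v_n)$ from Lemma \ref{PSc} lies a priori only in $H^s_\e$, not necessarily in $\mathcal{N}_\e\setminus\{0\}$, so without care the pullback $v/\|v\|_\e$ could be meaningless. This obstacle is removed precisely by the uniform bound \eqref{BBN}, which guarantees that $\|v\|_\e$ is bounded below by a positive constant independent of $n$ and thus that $m_\e^{-1}$ extends continuously to the limit. The rest is a routine application of the continuity of $m_\e^{-1}$ and the fact that $c\in[c_\e,\kappa]$ lies in the range where Lemma \ref{PSc} is available.
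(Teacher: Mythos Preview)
Your proof is correct and follows essentially the same approach as the paper: transfer the $(PS)_c$ sequence from $\psi_\e$ to $J_\e$ via Proposition~\ref{propz2}(c), apply Lemma~\ref{PSc} to obtain strong convergence of $v_n=m_\e(u_n)$, and then pull back to $\mathbb{S}_\e$ using the explicit inverse $m_\e^{-1}(w)=w/\|w\|_\e$ from Lemma~\ref{lemz1}(c). Your write-up is in fact a bit more careful than the paper's, since you explicitly justify that the limit $v$ is nonzero (and hence lies in $\mathcal{N}_\e$) via \eqref{BBN}, a point the paper glosses over.
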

\begin{proof}
Let $(u_{n})\subset \mathbb{S}_{\e}$ be a $(PS)_{c}$ sequence for $\psi_{\e}$. Then $\psi_{\e}(u_{n})\rightarrow c$ and $\|\psi'_{\e}(u_{n})\|_{*}\rightarrow 0$, where $\|\cdot\|_{*}$  denotes the norm in the dual space of $(T_{u_{n}}\mathbb{S}_{\e})^{*}$. By using Proposition \ref{propz2}-$(c)$, we can infer that $(m_{\e}(u_{n}))$ is a $(PS)_{c}$ sequence for $J_{\e}$. In view of Lemma \ref{PSc}, we can see that, up to a subsequence, there exists $u\in \mathbb{S}_{\e}$ such that $m_{\e}(u_{n})\rightarrow m_{\e}(u)$ in $H^{s}_{\e}$.
From Lemma \ref{lemz1}-$(c)$, we conclude that $u_{n}\rightarrow u$ in $\mathbb{S}_{\e}$.

\end{proof}

\section{The autonomous problem}
In this section we deal with the limit problem associated to \eqref{R}, namely
\begin{equation}\label{AP}
(-\Delta)^{s} u + V_{0}u = \left(\frac{1}{|x|^{\mu}}*F(u)\right)f(u) \mbox{ in } \R^{N}. 
\end{equation}
In what follows, we denote the above problem with $(P_{V_{0}})$.\\
The functional $J_{V_{0}}: H^{s}_{0}\rightarrow \R$ associated to the above problem is given by
$$
J_{V_{0}}(u)=\frac{1}{2}\|u\|^{2}_{V_{0}}-\Sigma_{0}(u), 
$$
where $H_{0}^{s}$ is the space $H^{s}(\R^{N})$ endowed with the norm 
$$
\|u\|^{2}_{V_{0}}=[u]^{2}+\int_{\R^{N}} V_{0} u^{2}\,dx,
$$
and
$$
\Sigma_{0}(u)=\frac{1}{2}\int_{\R^{N}} \left(\frac{1}{|x|^{\mu}}*F(u)\right)F(u)\, dx.
$$
Let us consider the following Nehari manifold
$$
\mathcal{N}_{V_{0}}=\{u\in H^{s}_{0}\setminus\{0\}: \langle J'_{V_{0}}(u), u\rangle=0 \}
$$ 
and let us denote by $\mathbb{S}_{0}$ the unit sphere in $H^{s}_{0}$.
Arguing as in the proofs of Lemma $\ref{lemz1}$ and Proposition $\ref{propz2}$, we can see that the following results hold.
\begin{lem}\label{lemz1e0}
Suppose that $f$ verifies $(f_1)$-$(f_4)$. \\
Then, the following facts hold true:
\begin{compactenum}
\item[$(a)$] 
For any $u\in H^{s}_{0}\setminus\{0\}$, let $h_{u}: \R_{+} \rightarrow \R$ be defined by $h_{u}(t):= J_{V_{0}}(t u)$. Then, there is a unique $t_{u}>0$ such that $h_{u}'(t)>0$ in $(0, t_{u})$ and $h_{u}'(t)<0$ in $(t_{u}, +\infty)$.
\item[$(b)$] There is $\tau>0$, independent on $u$, such that $t_{u}\geq \tau$ for every $u\in \mathbb{S}_{0}$. Moreover, for each compact set $\mathcal{W}\subset \mathbb{S}_{0}$, there is $C_{\mathcal{W}}>0$ such that $t_{u}\leq C_{\mathcal{W}}$ for every $u\in \mathcal{W}$.
\item[$(c)$] The map $\hat{m}_{0}: H^{s}_{0}\setminus\{0\}\rightarrow \mathcal{N}_{V_{0}}$ given by $\hat{m}_{0}(u):=t_{u}u$ is continuous and $m_{0}:= \hat{m}|_{\mathbb{S}_{0}}$ is a homeomorphism between $\mathbb{S}_{0}$ and $\mathcal{N}_{V_{0}}$. Moreover, $m^{-1}_{0}(u)= \frac{u}{\|u\|_{V_{0}}}$. 
\end{compactenum}
\end{lem}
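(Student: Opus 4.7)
The proof is essentially a verbatim adaptation of Lemma \ref{lemz1}, with the modified nonlinearity $g(\e x, \cdot)$ replaced by $f(\cdot)$ and the $\|\cdot\|_{\e}$-norm replaced by $\|\cdot\|_{V_{0}}$. The key simplification is that $f$ satisfies the full Ambrosetti--Rabinowitz condition $(f_3)$ and the monotonicity $(f_4)$ everywhere, so there is no need to track the distinction between $\Lambda$ and $\R^{N}\setminus\Lambda$ that complicated the penalized argument.

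For part $(a)$, the plan is to first verify the mountain pass geometry of $h_{u}(t)=J_{V_{0}}(tu)$: from $(f_1)$--$(f_2)$ combined with Theorem \ref{HLS} and Theorem \ref{Sembedding}, one has $h_{u}(t)\ge \tfrac{t^{2}}{2}\|u\|_{V_{0}}^{2}-C(t^{4}\|u\|_{V_{0}}^{4}+t^{2q}\|u\|_{V_{0}}^{2q})$, so $h_{u}(t)>0$ for small $t>0$; and, exactly as in the proof of Lemma \ref{MPG}, an integration argument based on $(f_3)$ gives $\Sigma_{0}(tu)\ge \Sigma_{0}(u/\|u\|_{V_{0}})\|u\|_{V_{0}}^{4}t^{4}$, hence $h_{u}(t)\to -\infty$ as $t\to\infty$. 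So a maximum $t_{u}>0$ exists by continuity, with $t_{u}u\in\mathcal{N}_{V_{0}}$ and $h_{u}'(t_{u})=0$. For uniqueness, I would rewrite the Nehari condition $tu\in\mathcal{N}_{V_{0}}$ as
\[
\frac{\|u\|_{V_{0}}^{2}}{t^{2}}=\int_{\R^{N}}\Bigl(\frac{1}{|x|^{\mu}}*\frac{F(tu)}{t^{2}}\Bigr)\frac{f(tu)}{tu}\,u^{2}\,dx.
\]
The left-hand side is strictly decreasing in $t>0$. By $(f_4)$ the factor $f(tu)/(tu)$ is increasing in $t$, and the AR condition $(f_3)$ implies that $F(s)/s^{2}$ is increasing in $s$ (since $(F(s)/s^{2})'=(sf(s)-2F(s))/s^{3}\ge 0$ by $(f_3)$), so $F(tu)/t^{2}=u^{2}\,F(tu)/(tu)^{2}$ is also increasing in $t$. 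Hence the right-hand side is strictly increasing in $t$, forcing uniqueness of $t_{u}$.

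For part $(b)$, the uniform lower bound follows from $h_{u}'(t_{u})=0$ together with the growth estimate \eqref{g-estimate} (applied to $f$) and the inequalities in Lemma \ref{MPG}: for $u\in\mathbb{S}_{0}$, $t_{u}^{2}\le \xi C_{1}t_{u}^{4}+C_{\xi}C_{2}t_{u}^{2q}$, and since $q>2$ one obtains a $u$-independent $\tau>0$. For the upper bound on a compact $\mathcal{W}\subset\mathbb{S}_{0}$, I would argue by contradiction exactly as in Lemma \ref{lemz1}$(b)$: assuming $u_{n}\to u$ in $\mathbb{S}_{0}$ with $t_{u_{n}}\to\infty$, the geometry from $(a)$ yields $J_{V_{0}}(t_{u_{n}}u_{n})\to -\infty$, while the AR identity
\[
J_{V_{0}}(v)=J_{V_{0}}(v)-\tfrac{1}{4}\langle J'_{V_{0}}(v),v\rangle\ge \tfrac{1}{4}\|v\|_{V_{0}}^{2}\qquad\forall v\in\mathcal{N}_{V_{0}}
\]
(valid here without the case split appearing in $(g_3)$, thanks to $(f_3)$) gives $J_{V_{0}}(t_{u_{n}}u_{n})/t_{u_{n}}^{2}\ge 1/4$, a contradiction.

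Finally, part $(c)$ is a direct application of Proposition~8 in Szulkin--Weth~\cite{SW}, whose hypotheses are precisely $(a)$ and $(b)$. The main (and only) delicate point is the uniqueness in $(a)$, but once one observes that AR together with $(f_4)$ makes the rescaled right-hand side of the Nehari equation strictly monotone, the whole argument reduces to the one already carried out for the penalized functional. I do not expect any genuine obstacle beyond checking that the estimates of Section~2 go through without the cut-off terms.
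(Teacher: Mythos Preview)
Your proposal is correct and follows essentially the same route as the paper, which in fact gives no separate proof for this lemma and simply refers back to Lemma~\ref{lemz1}. The only cosmetic difference is your uniqueness argument in~$(a)$: you divide the Nehari identity by $t^{4}$ and compare a strictly decreasing left-hand side with an increasing right-hand side using $(f_3)$--$(f_4)$, whereas the paper (in the proof of Lemma~\ref{lemz1}) divides by $t^{2}$ and invokes the analogue of $(g_4)$, but the two presentations are equivalent.
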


\noindent
Let us define the maps $\hat{\psi}_{0}: H^{s}_{0}\setminus\{0\} \rightarrow \R$ by $\hat{\psi}_{0}(u):= J_{V_{0}}(\hat{m}_{0}(u))$, and $\psi:=\hat{\psi}_{0}|_{\mathbb{S}_{0}}$. 
Then we have
\begin{prop}\label{propz2e0}
Suppose that $f$ verifies $(f_1)$-$(f_4)$. Then, one has:
\begin{compactenum}
\item[$(a)$] $\hat{\psi}_{0}\in C^{1}(H^{s}_{0}\setminus\{0\}, \R)$ and
\begin{equation*}
\langle \hat{\psi}_{0}'(u), v \rangle=\frac{\|\hat{m}_{0}(u)\|_{V_{0}}}{\|u\|_{V_{0}}} \langle J_{\e}'(\hat{m}_{0}(u)), v \rangle \,,
\end{equation*}
for every $u\in H^{s}_{0}\setminus\{0\}$ and $v\in H^{s}_{0}$;
\item[$(b)$] $\psi_{0} \in C^{1}(\mathbb{S}_{0}, \R)$ and $\langle \psi'_{0}(u), v \rangle = \|m_{0}(u)\|_{V_{0}} \langle J_{V_{0}}'(m_{0}(u)), v \rangle$, for every $v\in T_{u}\mathbb{S}_{0}$.
\item[$(c)$] If $(u_{n})$ is a $(PS)_{d}$ sequence for $\psi_{0}$, then $(m_{0}(u_{n}))$ is a $(PS)_{d}$ sequence for $J_{V_{0}}$. Moreover, if $(u_{n})\subset \mathcal{N}_{V_{0}}$ is a bounded $(PS)_{d}$ sequence for $J_{V_{0}}$, then $(m_{0}^{-1} (u_{n}))$ is a $(PS)_{d}$ sequence for the functional $\psi_{0}$;
\item[$(d)$] $u$ is a critical point of $\psi_{0}$ if and only if $m_{0}(u)$ is a nontrivial critical point for $J_{V_{0}}$. Moreover, the corresponding critical values coincide and
\begin{equation*}
\inf_{u\in\mathbb{S}_{0}} \psi_{0}(u) = \inf_{u\in\mathcal{N}_{V_{0}}} J_{V_{0}}(u).
\end{equation*}
\end{compactenum}
\end{prop}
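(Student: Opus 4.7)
The plan is to mirror the proof of Proposition \ref{propz2} almost verbatim, feeding the autonomous ingredients from Lemma \ref{lemz1e0} into the Szulkin--Weth abstract machinery (Proposition 8, Proposition 9, and Corollary 10 in \cite{SW}). The key observation is that the structural hypotheses used in the nonautonomous setting, namely the existence and uniqueness of a maximum $t_u$ of $h_u$, the uniform lower bound for $t_u$ on $\mathbb{S}_0$ together with compact-set upper bounds, and the homeomorphism $m_0:\mathbb{S}_0\to \mathcal{N}_{V_0}$ with explicit inverse $m_0^{-1}(u)=u/\|u\|_{V_0}$, are exactly what Lemma \ref{lemz1e0} provides. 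So the abstract framework applies without modification.

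For part $(a)$, I would argue that $\hat m_0$ is continuous from $H^s_0\setminus\{0\}$ into $\mathcal{N}_{V_0}\subset H^s_0$ by Lemma \ref{lemz1e0}$(c)$, and $J_{V_0}\in C^1(H^s_0,\R)$, so $\hat\psi_0=J_{V_0}\circ \hat m_0$ is differentiable with derivative computed via the chain rule. Using that $\hat m_0(u)=t_u u$ with $t_u>0$ depending on $u$, and that $\langle J'_{V_0}(\hat m_0(u)),\hat m_0(u)\rangle=0$ because $\hat m_0(u)\in \mathcal{N}_{V_0}$, one checks that the $t_u$-contribution cancels and the stated formula drops out, exactly as in the proof of Proposition $9$ of \cite{SW}. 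Part $(b)$ is then immediate: restrict $\hat\psi_0$ to $\mathbb{S}_0$, use $\|u\|_{V_0}=1$, observe that tangent vectors $v\in T_u\mathbb{S}_0$ satisfy $(u,v)_{V_0}=0$, and conclude that $\langle\psi_0'(u),v\rangle = \|m_0(u)\|_{V_0}\langle J'_{V_0}(m_0(u)),v\rangle$.

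For part $(c)$, the transfer of Palais--Smale sequences uses both directions of the homeomorphism. Given a $(PS)_d$ sequence $(u_n)\subset\mathbb{S}_0$ for $\psi_0$, set $v_n:=m_0(u_n)$; by the explicit norm formula in part $(b)$ and the uniform bounds on $t_{u_n}$ from Lemma \ref{lemz1e0}$(b)$, the quantity $\|m_0(u_n)\|_{V_0}$ is bounded away from zero and infinity along the sequence, so $\|J'_{V_0}(v_n)\|\to 0$ in the full dual, giving a $(PS)_d$ sequence for $J_{V_0}$. Conversely, if $(v_n)\subset\mathcal{N}_{V_0}$ is a bounded $(PS)_d$ sequence for $J_{V_0}$, applying $m_0^{-1}(v)=v/\|v\|_{V_0}$ and using boundedness from above and below (the latter follows from the analogue of \eqref{BBN} in the autonomous setting) yields a $(PS)_d$ sequence on $\mathbb{S}_0$. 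Part $(d)$ is then a direct consequence: critical points of $\psi_0$ correspond to points where the tangential derivative of $J_{V_0}$ at $m_0(u)$ vanishes, but since $m_0(u)\in\mathcal{N}_{V_0}$ already kills the radial component, one gets $J'_{V_0}(m_0(u))=0$; critical values coincide by definition, and the infimum identity follows because $m_0$ is a bijection.

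The main potential obstacle is not conceptual but bookkeeping: one must verify that the lower bound $t_{u_n}\geq \tau$ and the compact-set upper bound from Lemma \ref{lemz1e0}$(b)$ are genuinely enough to keep $\|m_0(u_n)\|_{V_0}$ bounded in both directions along a $(PS)_d$ sequence, so that convergence of Fr\'echet derivatives transfers faithfully between the sphere and the Nehari manifold. Since this step already works in the more delicate penalized case of Proposition \ref{propz2} (where the potential $V(\e x)$ varies), it transfers a fortiori to the constant-potential case $V_0$, so no new analytic ingredient is required and the proof reduces to a bracket-by-bracket translation of the earlier argument.
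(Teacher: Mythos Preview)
Your proposal is correct and matches the paper's own approach exactly: the paper does not give a standalone proof of this proposition but simply states that it follows by arguing as in Proposition \ref{propz2} (which in turn invokes Proposition 9 and Corollary 10 of \cite{SW}), using Lemma \ref{lemz1e0} as input. Your write-up in fact supplies more detail than the paper does, but the underlying strategy --- feed the autonomous Nehari structure into the Szulkin--Weth abstract framework --- is identical.
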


\noindent
Moreover, we have the following characterization of the infimum of $J_{0}$ on $\mathcal{N}_{V_{0}}$
\begin{equation}\label{cv0c}
c_{V_{0}}= \inf_{u\in\mathcal{N}_{V_{0}}} J_{V_{0}}(u)=\inf_{u\in H^{s}_{0}\setminus\{0\}} \max_{t>0} J_{V_{0}}(t u)=\inf_{u\in \mathbb{S}_{0}} \max_{t>0} J_{V_{0}}(t u).
\end{equation}

\noindent
The next Lemma allows us to assume that the weak limit of a $(PS)_{c}$ sequence is nontrivial.
\begin{lem}\label{HZL}
Let $(u_{n})\subset H^{s}_{0}$ be a $(PS)_{c}$ sequence for $J_{V_{0}}$ and such that $u_{n}\rightharpoonup 0$. Then, only one of the following alternatives holds.
\begin{compactenum}[$(a)$]
\item $u_{n}\rightarrow 0$ in $H^{s}_{0}$, or
\item there exists a sequence $(\tilde{y}_{n})\subset \R^{N}$, and constants $R>0$ and $\gamma>0$ such that
\begin{equation*}
\liminf_{n\rightarrow \infty}\int_{B_{R}(\tilde{y}_{n})} |u_{n}|^{2} \, dx\geq \gamma>0.
\end{equation*}
\end{compactenum}
\end{lem}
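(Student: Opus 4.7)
The plan is to prove the contrapositive $\neg(a) \Rightarrow (b)$. So I would suppose that $u_n \not\to 0$ in $H^s_0$ and deduce $(b)$.

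First, $(u_n)$ is bounded in $H^s_0$: proceeding as in Lemma \ref{PSc} via $(f_3)$ together with the non-negativity of $F$ and of the Riesz kernel $|x|^{-\mu}$, one obtains
$$
\tfrac{1}{4} \|u_n\|^2_{V_0} \leq J_{V_0}(u_n) - \tfrac{1}{4} \langle J'_{V_0}(u_n), u_n \rangle = c + o_n(1) + o_n(\|u_n\|_{V_0}),
$$
which yields boundedness. Since $(a)$ is assumed to fail, after extracting a subsequence there is $\delta>0$ with $\|u_n\|^2_{V_0}\geq \delta$ for every $n$. Testing $J'_{V_0}(u_n)\to 0$ against $u_n$ then forces
$$
\int_{\R^N} \left( \frac{1}{|x|^\mu} * F(u_n) \right) f(u_n)\, u_n \, dx = \|u_n\|^2_{V_0} + o_n(1) \geq \tfrac{\delta}{2}
$$
for all $n$ large.

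Next, I would set $t=\frac{2N}{2N-\mu}$, so that $\frac{2}{t}+\frac{\mu}{N}=2$, and use Theorem \ref{HLS} together with the pointwise bound $|F(u)|+|f(u)u|\leq C(|u|^2+|u|^q)$ coming from $(f_1)$--$(f_2)$ to estimate
$$
\tfrac{\delta}{2} \leq C \|F(u_n)\|_{L^t(\R^N)} \|f(u_n) u_n\|_{L^t(\R^N)} \leq C \bigl( \|u_n\|^2_{L^{2t}(\R^N)} + \|u_n\|^q_{L^{qt}(\R^N)} \bigr)^2.
$$
A short computation using $\mu\in (0,2s)$ and the range $q\in (2, \frac{2^{*}_{s}}{2}(2-\mu/N))$ from $(f_2)$ shows that both $2t$ and $qt$ lie strictly inside $(2, 2^{*}_{s})$. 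Consequently at least one of $\|u_n\|_{L^{2t}(\R^N)}$, $\|u_n\|_{L^{qt}(\R^N)}$ stays bounded below by a positive constant along a subsequence.

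Invoking the contrapositive of Lemma \ref{lions lemma}, the failure of $L^r$-convergence to zero for some $r\in (2, 2^{*}_{s})$ yields $R>0$, $\gamma>0$ and a sequence $(\tilde{y}_n)\subset \R^N$ with $\liminf_{n\to\infty}\int_{B_R(\tilde{y}_n)}|u_n|^2\, dx\geq \gamma$, which is precisely $(b)$. The main subtlety I anticipate is the verification that $2t$ and $qt$ both sit strictly inside the Lions range $(2,2^{*}_{s})$; this is exactly where the upper bound on $q$ in $(f_2)$ is used in an essential way and is the structural reason for the restriction on $q$ highlighted just after Theorem \ref{thmf}.
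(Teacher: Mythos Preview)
Your argument is correct and uses the same ingredients as the paper's proof (boundedness from the AR-type estimate, the growth bounds from $(f_1)$--$(f_2)$, Hardy--Littlewood--Sobolev, and Lions' vanishing lemma). The only difference is the direction of the contrapositive: the paper assumes $(b)$ fails, applies Lemma~\ref{lions lemma} to get $u_n\to 0$ in $L^r$ for all $r\in(2,2^*_s)$, then uses HLS to kill the convolution term and conclude $\|u_n\|_{V_0}\to 0$; you run these steps in reverse, and your explicit check that $2t,\,qt\in(2,2^*_s)$ is exactly the range verification implicit in the paper's direction.
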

\begin{proof}
Suppose that $(b)$ does not hold. Then, for all $R>0$, we have
$$
\lim_{n\rightarrow \infty}\sup_{y\in \R^{N}}\int_{B_{R}(y)} |u_{n}|^{2} \, dx=0.
$$
Since we know that $(u_{n})$ is bounded in $H^{s}_{0}$, we can use Lemma \ref{lions lemma} to deduce that $u_{n}\rightarrow 0$ in $L^{q}(\R^{N})$ for any $q\in (2, 2^{*}_{s})$. By using $(f_1)$-$(f_2)$, we know that for all $\eta>0$ there exists $C_{\eta}>0$ such that
$$
|F(t)|\leq \eta|t|^{2}+C_{\eta}|t|^{q},
$$
so, applying Hardy-Littlewood-Sobolev inequality, we get
$$
\int_{\R^{N}} \left(\frac{1}{|x|^{\mu}}*F(u_{n}) \right) f(u_{n})u_{n}\, dx=o_{n}(1).
$$
Taking into account $\langle J'_{0}(u_{n}), u_{n}\rangle=o_{n}(1)$, we can  infer that $\|u_{n}\|_{V_{0}}\rightarrow 0$ as $n\rightarrow \infty$. 
\end{proof}

\noindent
Now, we prove the following result for the autonomous problem.
\begin{lem}\label{BBMP}
Let $(w_{n})\subset H^{s}_{0}$ be a $(PS)_{c_{V_{0}}}$ sequence for $J_{V_{0}}$. Then the problem $(P_{V_{0}})$ has a positive ground state.
\end{lem}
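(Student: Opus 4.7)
The plan is to produce a nontrivial critical point of $J_{V_0}$ at the level $c_{V_0}$ by exploiting the translation invariance of the autonomous functional together with Lemma \ref{HZL}.

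First I would verify boundedness of $(w_n)$ in $H^s_0$. Using $(f_3)$ exactly as in the proof of Lemma \ref{lemz1}$(b)$ (the autonomous analogue of \eqref{absurd}), for $n$ large
\[
\tfrac{1}{4}\|w_n\|^2_{V_0} \leq J_{V_0}(w_n) - \tfrac{1}{4}\langle J'_{V_0}(w_n), w_n\rangle \leq c_{V_0} + 1 + \|w_n\|_{V_0},
\]
so $(w_n)$ is bounded and, up to a subsequence, $w_n \rightharpoonup w$ weakly in $H^s_0$, strongly in $L^r_{loc}(\R^N)$ for $r \in [2, 2^*_s)$, and a.e.\ in $\R^N$.

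Second, I would split into two cases. If $w \neq 0$, passing to the limit in $\langle J'_{V_0}(w_n), \varphi\rangle \to 0$ for every $\varphi \in C^{\infty}_c(\R^N)$ yields $J'_{V_0}(w) = 0$; the delicate convolution product $[|x|^{-\mu}*F(w_n)]f(w_n)\varphi$ is handled by combining the weak convergence $|x|^{-\mu}*F(w_n) \rightharpoonup |x|^{-\mu}*F(w)$ in $L^{\frac{2N}{\mu}}(\R^N)$ (from Hardy--Littlewood--Sobolev, as in \eqref{wconv}) with the strong local convergence of $f(w_n)\varphi$, exactly as in Step $2$ of Lemma \ref{PSc}. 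If instead $w = 0$, the positivity $c_{V_0} > 0$ (the autonomous analogue of the mountain pass geometry of Lemma \ref{MPG}) rules out the alternative $w_n \to 0$ in Lemma \ref{HZL}, so there exist $(\tilde{y}_n) \subset \R^N$ and $R,\gamma > 0$ with $\liminf_n \int_{B_R(\tilde{y}_n)} w_n^2 \, dx \geq \gamma$. Since $V_0$ is constant, $J_{V_0}$ is invariant under the translation $u(\cdot) \mapsto u(\cdot + \tilde{y}_n)$, so $\tilde{w}_n := w_n(\cdot + \tilde{y}_n)$ is again a bounded $(PS)_{c_{V_0}}$ sequence whose weak limit $\tilde{w}$ satisfies $\int_{B_R} \tilde{w}^2 \, dx \geq \gamma > 0$, and the same weak-continuity argument gives $J'_{V_0}(\tilde{w}) = 0$, hence $\tilde{w} \in \mathcal{N}_{V_0}$.

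Third, I would identify $\tilde{w}$ as a ground state. On the one hand $J_{V_0}(\tilde{w}) \geq c_{V_0}$ by definition of $c_{V_0}$. For the reverse inequality, on $\mathcal{N}_{V_0}$ one has the representation
\[
J_{V_0}(u) = \tfrac{1}{4}\|u\|^2_{V_0} + \tfrac{1}{4}\int_{\R^N} \left(\frac{1}{|x|^{\mu}} * F(u)\right)\bigl[f(u)u - 2F(u)\bigr] dx,
\]
whose integrand is nonnegative by $(f_3)$. Weak lower semicontinuity of $\|\cdot\|_{V_0}$ together with Fatou's lemma applied to the nonnegative convolution integrand along $(\tilde{w}_n)$ then yields $J_{V_0}(\tilde{w}) \leq \liminf_n J_{V_0}(\tilde{w}_n) = c_{V_0}$.

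Finally, positivity: testing $J'_{V_0}(\tilde{w}) = 0$ against $\tilde{w}^-$ and using the inequality
\[
\iint_{\R^{2N}} \frac{(\tilde{w}(x)-\tilde{w}(y))(\tilde{w}^-(x)-\tilde{w}^-(y))}{|x-y|^{N+2s}}\, dx\, dy \leq -[\tilde{w}^-]^2,
\]
together with $f(t) = 0$ for $t \leq 0$ (which kills the nonlocal term pointwise), gives $\|\tilde{w}^-\|^2_{V_0} \leq 0$, so $\tilde{w} \geq 0$; a standard strong maximum principle for $(-\Delta)^s + V_0$ upgrades this to $\tilde{w} > 0$. The main obstacle is the second step: because the convolution $|x|^{-\mu} * F(\tilde{w}_n)$ only converges weakly, recovering $J'_{V_0}(\tilde{w}) = 0$ requires the HLS mapping property combined with the tail-decay argument of Lemma \ref{PSc}; the translation trick is essential precisely to ensure that the weak limit is not destroyed by the escape of mass.
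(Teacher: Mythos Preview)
Your proposal is correct and follows essentially the same route as the paper: boundedness of the $(PS)_{c_{V_0}}$ sequence, recovery of a nontrivial weak limit via the Lions-type dichotomy (Lemma \ref{HZL}) and translation invariance, passage to the limit in $\langle J'_{V_0}(\cdot),\varphi\rangle$, identification of the level $c_{V_0}$ via Fatou applied to the nonnegative quantity $J_{V_0}-\tfrac14\langle J'_{V_0},\cdot\rangle$, and sign via testing against the negative part. The only place where the paper is more explicit is strict positivity: rather than invoking a ``standard strong maximum principle'', the paper first runs a Moser iteration (as in Lemma \ref{lemMoser}) to get $\tilde w\in L^\infty(\R^N)$, observes that $K(x):=|x|^{-\mu}*F(\tilde w)$ is bounded so that $K f(\tilde w)\in L^\infty$, applies Silvestre's regularity to obtain $\tilde w\in C^{0,\alpha}$, and then concludes $\tilde w>0$ from the fractional Harnack inequality. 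You may want to flesh out your last line along these lines, since the applicability of a strong maximum principle in the weak setting is exactly what the $L^\infty$/$C^{0,\alpha}$ step is there to justify.
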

\begin{proof}
Arguing as in Lemma \ref{MPG}, we can see that $J_{0}$ has a mountain pass geometry. As a consequence of the mountain pass theorem without the (PS) condition (see \cite{W}), there exists a Palais-Smale sequence $(u_{n})\subset H^{s}_{0}$ such that
$$
 J_{V_{0}}(u_{n})\rightarrow c_{V_{0}} \mbox{ and } J'_{V_{0}}(u_{n})\rightarrow 0.
$$
Since
$$
J_{V_{0}}(u_{n})-\frac{1}{4}\langle J'_{V_{0}}(u_{n}), u_{n}\rangle\geq \frac{1}{4}\|u_{n}\|^{2}_{V_{0}},
$$
it is easy to deduce that $(u_{n})$ is bounded in $H^{s}_{0}$. By using $(f_{1})$-$(f_{2})$, we know that $\|u\|_{V_{0}}\geq r$ for all $u\in \mathcal{N}_{V_{0}}$. Then, arguing as in the proof of Lemma \ref{HZL}, we can see that there exists a sequence $(y_{n})\subset \R^{N}$, and constants $R>0$ and $\gamma>0$ such that
\begin{equation*}
\liminf_{n\rightarrow \infty}\int_{B_{R}(y_{n})} |u_{n}|^{2} \, dx\geq \gamma>0.
\end{equation*}
Set $v_{n}=u_{n}(\cdot-y_{n})$. Since $J_{V_{0}}$ and $J'_{V_{0}}$ are both invariant by translation, it holds that
$$
J'_{V_{0}}(v_{n})\rightarrow 0 \mbox{ and } J_{V_{0}}(v_{n})\rightarrow c_{V_{0}}.
$$
We observe that $(v_{n})$ is also bounded in $H^{s}_{0}$, so we may assume that $v_{n}\rightharpoonup v$ in  $H^{s}_{0}$, for some $v\neq 0$. Now, we show that $v$ is a weak solution to $(P_{V_{0}})$.
Fix $\varphi\in C^{\infty}_{0}(\R^{N})$. Recalling that $(v_{n})$ is bounded, we can argue as in the proof of Lemma \ref{lemK} to deduce that $\|\frac{1}{|x|^{\mu}}*F(v_{n})\|_{L^{\infty}(\R^{N})}\leq C$ for any $n\in \N$. Then, using the fact that $f$ has subcritical growth and $v_{n}\rightarrow v$ in $L^{r}_{loc}(\R^{N})$ for any $r\in [1, 2^{*}_{s})$, we can see that the Dominated Convergence Theorem gives 
$$
\int_{\R^{N}} \left(\frac{1}{|x|^{\mu}}*F(v_{n})\right)f(v_{n})\varphi\, dx\rightarrow \int_{\R^{N}} \left(\frac{1}{|x|^{\mu}}*F(v)\right)f(v)\varphi\, dx.
$$
This combined with the weak convergence of $(v_{n})$ yields
$$
o_{n}(1)=\langle J'_{V_{0}}(v_{n}), \varphi\rangle\rightarrow \langle J'_{V_{0}}(v), \varphi\rangle.
$$
From the density of $C^{\infty}_{0}(\R^{N})$ in $H^{s}_{0}$, we get $ \langle J'_{V_{0}}(v), \varphi\rangle=0$ for all $\varphi\in H^{s}_{0}$.
In particular, $v\in \mathcal{N}_{V_{0}}$. Using the definition of $c_{V_{0}}$ together with Fatou's Lemma, we also deduce that $J_{V_{0}}(v)=c_{V_{0}}$. 

Now, recalling that $f(t)=0$ for $t\leq 0$ and $(x-y)(x^{-}-y^{-})\geq |x^{-}-y^{-}|^{2}$ for all $x, y\in \R$, it is easy to deduce that $\langle J'_{V_{0}}(v), v^{-}\rangle=0$ implies that $v\geq 0$ in $\R^{N}$. \\
Proceeding as in the proof of Lemma \ref{lemK}, we can see that $K(x):=\frac{1}{|x|^{\mu}}*F(v)$ is bounded in $\R^{N}$, so similar arguments developed in Lemma \ref{lemMoser} below, allow us to deduce that $v\in L^{\infty}(\R^{N})$. Since $f$ has subcritical growth and $K(x)$ is bounded, we can see that $K(x)f(v)\in L^{\infty}(\R^{N})$, so we can apply Proposition $2.9$ in \cite{S} to infer that $v\in C^{0, \alpha}(\R^{N})$ for some $\alpha\in (0, 1)$. Using the Harnack inequality \cite{S}, we can conclude that $v>0$ in $\R^{N}$.
\end{proof}

\noindent
The next result is a compactness result on autonomous problem which we will use later.
\begin{lem}\label{Ekeland}
Let $(\tilde{v}_{n})\subset \mathcal{N}_{V_{0}}$ be such that $J_{0}(\tilde{v}_{n})\rightarrow c_{V_{0}}$. Then  $(\tilde{v}_{n})$ has a convergent subsequence in $H^{s}_{0}$.
\end{lem}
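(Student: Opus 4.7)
The plan is to upgrade the given minimizing sequence on $\mathcal{N}_{V_0}$ to a Palais-Smale sequence via Ekeland's variational principle on the sphere $\mathbb{S}_0$, then exploit the translation invariance of the autonomous problem to extract a nontrivial weak limit, and finally upgrade weak convergence to strong convergence through a Fatou-type energy argument.

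First I would set $u_n := m_0^{-1}(\tilde v_n) = \tilde v_n/\|\tilde v_n\|_{V_0} \in \mathbb{S}_0$. By Proposition \ref{propz2e0}-(d) one has $\psi_0(u_n) = J_{V_0}(\tilde v_n) \to c_{V_0} = \inf_{\mathbb{S}_0}\psi_0$, so applying Ekeland's variational principle to the $C^1$-functional $\psi_0$ on the complete $C^1$-manifold $\mathbb{S}_0$ yields $(\hat u_n)\subset \mathbb{S}_0$ with $\|\hat u_n - u_n\|\to 0$, $\psi_0(\hat u_n)\to c_{V_0}$, and $\|\psi_0'(\hat u_n)\|_* \to 0$. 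Setting $w_n := m_0(\hat u_n)\in \mathcal{N}_{V_0}$, Proposition \ref{propz2e0}-(c) converts this into a $(PS)_{c_{V_0}}$-sequence for $J_{V_0}$, while continuity of $m_0$ (Lemma \ref{lemz1e0}-(c)) gives $\|w_n-\tilde v_n\|_{V_0}\to 0$; thus it is enough to prove strong convergence (up to subsequence and translation) for $(w_n)$.

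Boundedness of $(w_n)$ follows at once from
\begin{equation*}
J_{V_0}(w_n)-\tfrac14\langle J'_{V_0}(w_n),w_n\rangle \geq \tfrac14\|w_n\|^2_{V_0},
\end{equation*}
which uses $(f_3)$ exactly as in Lemma \ref{BBMP}. Since $\|w_n\|_{V_0}\geq r>0$ on $\mathcal{N}_{V_0}$ by the analogue of \eqref{BBN}, the vanishing alternative in Lemma \ref{HZL} is excluded, producing $(y_n)\subset\R^N$ and constants $R,\gamma>0$ with $\liminf_n \int_{B_R(y_n)} w_n^2\,dx \geq \gamma$. Let $\bar w_n(\cdot) := w_n(\cdot-y_n)$; by translation invariance of $J_{V_0}$, $(\bar w_n)$ remains a $(PS)_{c_{V_0}}$-sequence, and along a subsequence $\bar w_n\rightharpoonup w\neq 0$ in $H^s_0$. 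Following the weak-continuity argument of Lemma \ref{BBMP}, the uniform $L^\infty$-bound on $|x|^{-\mu}*F(\bar w_n)$ (established as in Lemma \ref{lemK}) combined with local compactness and dominated convergence allows passage to the limit in the Choquard term tested against any $\varphi\in C^\infty_c(\R^N)$, giving $J'_{V_0}(w)=0$, and so $w\in \mathcal{N}_{V_0}$ with $J_{V_0}(w)\geq c_{V_0}$.

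Finally, for strong convergence I would decompose
\begin{equation*}
J_{V_0}(\bar w_n)-\tfrac14\langle J'_{V_0}(\bar w_n),\bar w_n\rangle = \tfrac14\|\bar w_n\|^2_{V_0} + \tfrac14\iint_{\R^{2N}} \frac{F(\bar w_n(x))\,[f(\bar w_n(y))\bar w_n(y)-2F(\bar w_n(y))]}{|x-y|^\mu}\,dx\,dy,
\end{equation*}
where both summands are nonnegative (the latter by $(f_3)$ together with $F\geq 0$). After extracting a further a.e.-convergent subsequence, Fatou's lemma applied to each piece yields
\begin{equation*}
c_{V_0}=\lim_n\bigl[J_{V_0}(\bar w_n)-\tfrac14\langle J'_{V_0}(\bar w_n),\bar w_n\rangle\bigr] \geq J_{V_0}(w)-\tfrac14\langle J'_{V_0}(w),w\rangle = J_{V_0}(w) \geq c_{V_0}.
\end{equation*}
Hence $J_{V_0}(w)=c_{V_0}$ and each Fatou inequality is in fact an equality; in particular $\|\bar w_n\|_{V_0}\to \|w\|_{V_0}$, which combined with the weak convergence in the Hilbert space $H^s_0$ upgrades to $\bar w_n\to w$ strongly. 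Transferring this back via $\|w_n-\tilde v_n\|_{V_0}\to 0$ gives the desired convergence of the subsequence of the translates $\tilde v_n(\cdot-y_n)$, which is how the lemma is applied in the sequel. The main obstacle I expect is the nonlocal Choquard term in two places: justifying the Fatou bound on the double integral, and justifying passage to the limit in $\langle J'_{V_0}(\bar w_n),\varphi\rangle$; both rely crucially on the uniform $L^\infty$-control of the Riesz convolution provided by Lemma \ref{lemK} and on the sign condition built into $(f_3)$.
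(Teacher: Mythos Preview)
Your proposal follows the paper's strategy exactly: pass to the sphere via $m_0^{-1}$, invoke Ekeland to upgrade the minimizing sequence to a $(PS)_{c_{V_0}}$ sequence for $\psi_0$, push it back to a $(PS)_{c_{V_0}}$ sequence for $J_{V_0}$ via Proposition~\ref{propz2e0}-(c), and then appeal to the compactness machinery behind Lemma~\ref{BBMP}. The paper's proof is terser---it simply writes ``By applying Lemma~\ref{BBMP}, it follows that $m_0(\tilde w_n)\to m_0(\tilde w)$ in $H^s_0$''---whereas you spell out the translation, the weak-limit-is-critical-point step, and the Fatou argument that forces $\|\bar w_n\|_{V_0}\to\|w\|_{V_0}$ and hence strong convergence. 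This extra detail is correct and is in fact what is implicitly being used.

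You are also right to flag the translation issue at the end. As literally stated, the lemma asserts convergence of $(\tilde v_n)$ itself, which would be false in general (take $\tilde v_n=w(\cdot-ne_1)$ for a ground state $w$). What your argument actually yields---strong convergence of a translated subsequence---is precisely what is used in Lemma~\ref{prop3.3}, where the sequence $\tilde v_n=t_nv_n$ has already been centred so that its weak limit is nontrivial; in that situation the translation $y_n$ produced by Lemma~\ref{HZL} can be taken bounded, and your Fatou argument then gives convergence of $(\tilde v_n)$ itself. So the proof is complete in the context where the lemma is applied, and your remark about ``how the lemma is applied in the sequel'' is the correct resolution.
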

\begin{proof}
Since $(\tilde{v}_{n})\subset \mathcal{N}_{V_{0}}$ and $J_{V_{0}}(\tilde{v}_{n})\rightarrow c_{V_{0}}$, we can apply Lemma \ref{lemz1e0}-(c) and Proposition \ref{propz2e0}-(d) to infer that
$$
w_{n}=m_{0}^{-1}(\tilde{v}_{n})=\frac{\tilde{v}_{n}}{\|\tilde{v}_{n}\|_{V_{0}}}\in \mathbb{S}_{0}
$$
and
$$
\psi_{0}(w_{n})=J_{V_{0}}(\tilde{v}_{n})\rightarrow c_{V_{0}}=\inf_{v\in \mathbb{S}_{0}}\psi_{0}(v).
$$
Hence, by using the Ekeland's variational principle \cite{Ekeland}, we can find $(\tilde{w}_{n})\subset \mathbb{S}_{0}$ such that $(\tilde{w}_{n})$ is a $(PS)_{c_{V_{0}}}$ sequence for $\psi_{0}$ on $\mathbb{S}_{0}$ and $\|\tilde{w}_{n}-w_{n}\|_{V_{0}}=o_{n}(1)$.
From Proposition \ref{propz2e0}-(c), we can deduce that $m_{0}(\tilde{w}_{n})$ is a $(PS)_{c_{V_{0}}}$ sequence of $J_{0}$. 
By applying Lemma \ref{BBMP}, it follows that there exists $\tilde{w}\in \mathbb{S}_{0}$ such that $m_{0}(\tilde{w}_{n})\rightarrow m_{0}(\tilde{w})$ in $H^{s}_{0}$. This fact, together with Lemma \ref{lemz1e0}-(c), and $\|\tilde{w}_{n}-w_{n}\|_{V_{0}}=o_{n}(1)$, allow us to conclude that  $\tilde{v}_{n}\rightarrow \tilde{v}$ in $H^{s}_{0}$.
\end{proof}

\section{Multiplicity results}
In order to study the multiplicity of solutions to (\ref{P}), we need introduce some useful tools.\\
Let us consider $\delta>0$ such that $M_{\delta}\subset \Lambda$, where
$$
M_{\delta}=\{x\in \R^{N}: dist(x, M)\leq \delta\}.
$$
and  $\eta\in C^{\infty}_{0}(\R_{+}, [0, 1])$ satisfying $\eta(t)=1$ if $0\leq t\leq \frac{\delta}{2}$ and $\eta(t)=0$ if $t\geq \delta$.
 
For any $y\in M$, we define
$$
\Psi_{\e, y}(x)=\eta(|\e x-y|) w\left(\frac{\e x-y}{\e}\right)
$$
where $w$ is a positive ground state solution for $J_{V_{0}}$ (by Lemma \ref{BBMP}).\\
Let us denote by $t_{\e}>0$ the unique positive number verifying 
$$
\max_{t\geq 0} J_{\e}(t \Psi_{\e, y})=J_{\e}(t_{\e} \Psi_{\e, y}).
$$
Finally, we consider $\Phi_{\e}(y)=t_{\e} \Psi_{\e, y}$.

In next lemma we prove an important relationship between $\Phi_{\e}$ and the set $M$.
\begin{lem}\label{lemma3.4}
The functional $\Phi_{\e}$ satisfies the following limit
$$
\lim_{\e\rightarrow 0} J_{\e}(\Phi_{\e}(y))=c_{V_{0}} \mbox{ uniformly in } y\in M.
$$
\end{lem}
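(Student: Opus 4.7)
The plan is to exploit the concentration structure of $\Psi_{\e,y}$: after the translation $z = x - y/\e$, the function $\hat{\Psi}_\e(z) := \Psi_{\e,y}(z + y/\e) = \eta(\e|z|)\,w(z)$ collapses to the autonomous ground state $w$ as $\e \to 0$ by a standard truncation argument in $H^s(\R^N)$. Since the Gagliardo seminorm and the Riesz kernel $|x-x'|^{-\mu}$ are translation-invariant, and since $V(\e z + y) \to V(y) = V_0$ for $y \in M$, every quantity in $J_\e(t\Psi_{\e,y})$ should limit, locally uniformly in $t$, to the corresponding quantity in $J_{V_0}(tw)$. Uniqueness of the Nehari multiplier then forces $t_\e \to 1$ and $J_\e(\Phi_\e(y)) \to J_{V_0}(w) = c_{V_0}$. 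To obtain uniformity over the compact set $M$, I would argue by contradiction: assume there exist $\gamma_0>0$, $\e_n \to 0$ and $y_n \in M$ with $|J_{\e_n}(\Phi_{\e_n}(y_n)) - c_{V_0}| \ge \gamma_0$, and extract a subsequence with $y_n \to y_0 \in M$.

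By translation invariance, $[\Psi_{\e_n,y_n}]^2 = [\hat{\Psi}_n]^2 \to [w]^2$, while the substitution $x = z + y_n/\e_n$ gives
$$\int_{\R^N} V(\e_n x)\,\Psi_{\e_n,y_n}^2\,dx = \int_{\R^N} V(\e_n z + y_n)\,\hat{\Psi}_n^2\,dz \longrightarrow V_0\int_{\R^N} w^2\,dz$$
by dominated convergence, using continuity of $V$ at $y_0$ and local boundedness of $V$. Hence $\|\Psi_{\e_n,y_n}\|_{\e_n}^2 \to \|w\|_{V_0}^2$. Next, for $n$ large the support of $\Psi_{\e_n,y_n}$ lies in $\{x : \e_n x \in B_\delta(y_n)\} \subset \Lambda_{\e_n}$ (because $M_\delta \subset \Lambda$), so $G(\e_n x,\cdot) = F(\cdot)$ and $g(\e_n x,\cdot) = f(\cdot)$ on that support. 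Translating the Choquard double integral to the $z$-variable and applying Theorem \ref{HLS} together with the strong convergence $\hat{\Psi}_n \to w$ in $H^s(\R^N)$ yields $J_{\e_n}(t\Psi_{\e_n,y_n}) \to J_{V_0}(tw)$, locally uniformly in $t \ge 0$.

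It remains to confine the maximizer $t_{\e_n}$ to a fixed compact subinterval of $(0,\infty)$. The lower bound follows from Lemma \ref{lemz1}(b) applied to $\Psi_{\e_n,y_n}/\|\Psi_{\e_n,y_n}\|_{\e_n} \in \mathbb{S}_{\e_n}$ combined with $\|\Psi_{\e_n,y_n}\|_{\e_n} \to \|w\|_{V_0}$. For the upper bound, choose $T > 1$ with $J_{V_0}(Tw) < 0$; by the uniform convergence above, $J_{\e_n}(T\Psi_{\e_n,y_n}) < 0$ for $n$ large, and since $t \mapsto J_{\e_n}(t\Psi_{\e_n,y_n})$ is strictly increasing on $(0,t_{\e_n})$ and strictly decreasing afterwards (Lemma \ref{lemz1}(a)), the inequality $J_{\e_n}(T\Psi_{\e_n,y_n}) < 0 < J_{\e_n}(t_{\e_n}\Psi_{\e_n,y_n})$ forces $t_{\e_n} < T$. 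Extracting a further subsequence with $t_{\e_n} \to t_0 \in (0,\infty)$ and passing to the limit in the Nehari identity for $t_{\e_n}\Psi_{\e_n,y_n}$ yields $t_0 w \in \mathcal{N}_{V_0}$, so $t_0 = 1$ by the uniqueness in Lemma \ref{lemz1e0}(a). Consequently $J_{\e_n}(\Phi_{\e_n}(y_n)) \to J_{V_0}(w) = c_{V_0}$, contradicting our assumption.

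The main obstacle is the simultaneous handling of both nonlocal objects---the fractional Gagliardo seminorm and the Choquard convolution---after translation. One relies on translation invariance of both kernels together with the strong $H^s$-convergence $\hat{\Psi}_n \to w$ and the Hardy--Littlewood--Sobolev inequality in order to push the limit through the full functional uniformly for $t$ in a compact set, which is precisely what permits commuting the limit $\e\to0$ with the maximization in $t$. The structural input making this work is that $M_\delta\subset\Lambda$, which ensures $g(\e_n x,\cdot) = f(\cdot)$ on the relevant supports so that the limit really is the autonomous functional $J_{V_0}$.
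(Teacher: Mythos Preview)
Your proof is correct and follows essentially the same approach as the paper: contradiction argument, change of variable $z = x - y_n/\e_n$ so that $\hat\Psi_n = \eta(\e_n|\cdot|)w \to w$ in $H^s$, use of $M_\delta \subset \Lambda$ to replace $G$ by $F$ on the support, convergence of the quadratic and Choquard terms via dominated convergence and Hardy--Littlewood--Sobolev, and then passage to the limit in the Nehari identity to force $t_{\e_n}\to 1$. Your treatment of the upper bound for $t_{\e_n}$ (via a fixed $T$ with $J_{V_0}(Tw)<0$ together with locally uniform convergence in $t$) is somewhat more explicit than the paper's, which simply invokes $t_{\e_n}\Psi_{\e_n,y_n}\in\mathcal{N}_{\e_n}$ and the hypotheses on $f$, but the underlying idea is the same.
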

\begin{proof}
Assume by contradiction that there exist $\delta_{0}>0$, $(y_{n})\subset M$ and $\e_{n}\rightarrow 0$ such that 
\begin{equation}\label{4.41}
|J_{\e_{n}}(\Phi_{\e_{n}}(y_{n}))-c_{V_{0}}|\geq \delta_{0}.
\end{equation}
We first show that $\lim_{n\rightarrow \infty}t_{\e_{n}}<\infty$.
Let us observe that by using the change of variable $z=\frac{\e_{n}x-y_{n}}{\e_{n}}$, if $z\in B_{\frac{\delta}{\e_{n}}}(0)$, it follows that $\e_{n} z\in B_{\delta}(0)$ and $\e_{n} z+y_{n}\in B_{\delta}(y_{n})\subset M_{\delta}\subset \Lambda$. 

Since $G=F$ on $\Lambda$, we can see that 
\begin{align}
J_{\e_{n}}(\Phi_{\e_{n}}(z_{n}))&=\frac{t_{\e_{n}}^{2}}{2}\int_{\R^{N}} |(-\Delta)^{\frac{s}{2}}(\eta(|\e_{n} z|)w(z))|^{2}\, dz+\frac{t_{\e_{n}}^{2}}{2}\int_{\R^{N}} V(\e_{n} z+y_{n}) (\eta(|\e_{n} z|) w(z))^{2}\, dz \nonumber\\
&-\Sigma_{0}(t_{\e_{n}}\eta(|\e_{n} z|)w(z)).
\end{align}
In view of the Dominated Convergence Theorem and Lemma $5$ in \cite{PP}, we can see that
$$
\lim_{n\rightarrow \infty} \|\Psi_{\e_{n}, y_{n}}\|_{\e_{n}}=\|w\|_{V_{0}}
$$
and
$$
\lim_{n\rightarrow \infty} \Sigma_{0}(\Psi_{\e_{n}, y_{n}})=\Sigma_{0}(w).
$$

By using $t_{\e_{n}}\Psi_{\e_{n}, y_{n}}\in \mathcal{N}_{\e_{n}}$ and the assumptions on $f$, it is easy to prove that $t_{\e_{n}}\rightarrow t_{0}>0$. Moreover, being
\begin{equation}\label{3.9}
t_{\e_{n}}^{2}\|\Psi_{\e_{n}, y_{n}}\|_{\e_{n}}^{2}=\int_{\R^{N}}\int_{\R^{N}} \frac{F(t_{\e_{n}}\Psi_{\e_{n}, y_{n}})f(t_{\e_{n}}\Psi_{\e_{n}, y_{n}})t_{\e_{n}}\Psi_{\e_{n}, y_{n}}}{|x-y|^{\mu}}
\end{equation}
we can deduce that
$$
\|w\|^{2}_{V_{0}}=\lim_{n\rightarrow \infty} \int_{\R^{N}}\int_{\R^{N}} \frac{F(t_{\e_{n}}\Psi_{\e_{n}, y_{n}})f(t_{\e_{n}}\Psi_{\e_{n}, y_{n}})t_{\e_{n}}\Psi_{\e_{n}, y_{n}}}{t_{\e_{n}}^{2}|x-y|^{\mu}}.
$$
Taking into account that $w$ is a ground state to $(P_{V_{0}})$ and using $(f_4)$, we can conclude that  $t_{\e_{n}}\rightarrow 1$.
As a consequence
$$
\lim_{n\rightarrow \infty} \Sigma_{0}(t_{\e_{n}}\eta(|\e_{n} z|)w(z))=\Sigma_{0}(w)
$$
and this yields
$$
\lim_{n\rightarrow \infty} J_{\e_{n}}(\Phi_{\e_{n}}(y_{n}))=J_{0}(w)=c_{V_{0}},
$$
which contradicts (\ref{4.41}).

\end{proof}

\noindent
Now, we consider $\delta>0$ such that $M_{\delta}\subset \Lambda$, and choose $\rho=\rho(\delta)>0$ such that $M_{\delta}\subset B_{\delta}(0)$.
We define $\Upsilon: \R^{N}\rightarrow \R^{N}$ by setting $\Upsilon(x)=x$ for $|x|\leq \rho$ and $\Upsilon(x)=\frac{\rho x}{|x|}$ for $|x|\geq \rho$.
Then we define the barycenter map $\beta_{\e}: \mathcal{N}_{\e}\rightarrow \R^{N}$ given by
$$
\beta_{\e}(u)=\frac{\int_{\R^{N}} \Upsilon(\e x) u^{2}(x)\, dx}{\int_{\R^{N}} u^{2}(x) \,dx}.
$$

\begin{lem}\label{lemma3.5}
The function $\beta_{\e}$ verifies the following limit
$$
\lim_{\e \rightarrow 0} \beta_{\e}(\Phi_{\e}(y))=y \mbox{ uniformly in } y\in M.
$$
\end{lem}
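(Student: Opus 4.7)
The plan is to write $\beta_\e(\Phi_\e(y))$ as an integral over $\R^N$ after an explicit change of variables, and then pass to the limit using the dominated convergence theorem together with the compactness of $M$.

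First, since $\Phi_\e(y) = t_\e \Psi_{\e,y}$, the factor $t_\e^2$ cancels in numerator and denominator of $\beta_\e$, so
\[
\beta_\e(\Phi_\e(y)) = \frac{\int_{\R^N} \Upsilon(\e x)\, \Psi_{\e,y}^{2}(x)\, dx}{\int_{\R^N} \Psi_{\e,y}^{2}(x)\, dx}.
\]
I would then perform the change of variables $z = x - y/\e$, so that $\e x = \e z + y$ and $\Psi_{\e,y}(x)$ becomes $\eta(|\e z|)\, w(z)$. This yields
\[
\beta_\e(\Phi_\e(y)) = \frac{\int_{\R^N} \Upsilon(\e z + y)\, \eta(|\e z|)^{2}\, w(z)^{2}\, dz}{\int_{\R^N} \eta(|\e z|)^{2}\, w(z)^{2}\, dz}.
\]

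Since $M \subset \Lambda \subset B_\rho(0)$, one has $\Upsilon(y) = y$ for every $y \in M$, and therefore I can rewrite
\[
\beta_\e(\Phi_\e(y)) - y = \frac{\int_{\R^N} \bigl[\Upsilon(\e z + y) - y\bigr]\, \eta(|\e z|)^{2}\, w(z)^{2}\, dz}{\int_{\R^N} \eta(|\e z|)^{2}\, w(z)^{2}\, dz}.
\]
I then argue by contradiction: if the claimed limit fails uniformly on $M$, there would exist $\delta_0 > 0$, a sequence $\e_n \to 0$ and points $y_n \in M$ with $|\beta_{\e_n}(\Phi_{\e_n}(y_n)) - y_n| \geq \delta_0$. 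Since $M$ is compact (it is closed by continuity of $V$ and bounded since $M \subset \Lambda$, with $\overline{M} \subset \Lambda$ by $(V_2)$), up to a subsequence $y_n \to y_0 \in M$.

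For the passage to the limit, $\Upsilon$ is continuous and bounded by $\rho$, $w \in L^2(\R^N)$, and $|[\Upsilon(\e_n z + y_n) - y_n]\, \eta(|\e_n z|)^2 w(z)^2| \leq 2\rho\, w(z)^2$, which is an $L^1$ majorant independent of $n$. Pointwise, $\Upsilon(\e_n z + y_n) - y_n \to \Upsilon(y_0) - y_0 = 0$ and $\eta(|\e_n z|)^2 \to 1$ for every $z$. The dominated convergence theorem then gives that the numerator tends to $0$ and the denominator tends to $\|w\|_{L^2(\R^N)}^{2} > 0$, contradicting the assumption. Since there is no real technical obstacle beyond handling uniformity, this sequential argument carries the proof through; the only mildly delicate point is the verification that $\overline{M} \subset \Lambda$ so that $M$ is compact and $\Upsilon$ restricted to $M$ is the identity.
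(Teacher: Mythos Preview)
Your proof is correct and follows essentially the same route as the paper's own argument: both proceed by contradiction, perform the change of variable $z = x - y/\e$ to rewrite $\beta_\e(\Phi_\e(y)) - y$ as a quotient of integrals involving $[\Upsilon(\e z + y) - y]\,\eta(|\e z|)^2 w(z)^2$, and conclude via the Dominated Convergence Theorem using $|\Upsilon|\le \rho$ and $w\in L^2$. Your added details (the explicit cancellation of $t_\e^2$, the $L^1$ majorant, and the compactness of $M$) are all fine; note only that extracting a convergent subsequence of $(y_n)$ is not actually needed, since for fixed $z$ one has $\e_n z\to 0$ and hence $\e_n z + y_n\in M_\delta\subset B_\rho$ eventually, giving $\Upsilon(\e_n z+y_n)-y_n=\e_n z\to 0$ directly.
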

\begin{proof}
Suppose by contradiction that there exists $\delta_{0}>0$, $(y_{n})\subset M$ and $\e_{n}\rightarrow 0$ such that 
\begin{equation}\label{4.4}
|\beta_{\e_{n}}(\Phi_{\e_{n}}(y_{n}))-y_{n}|\geq \delta_{0}.
\end{equation}
By using the definitions of $\Phi_{\e_{n}}(y_{n})$, $\beta_{\e_{n}}$ and $\eta$, and using a change of variable, we can see that 
$$
\beta_{\e_{n}}(\Psi_{\e_{n}}(y_{n}))=y_{n}+\frac{\int_{\R^{N}}[\Upsilon(\e_{n}z+y_{n})-y_{n}] |\eta(|\e_{n}z|) w(z)|^{2} \, dz}{\int_{\R^{N}} |\eta(|\e_{n}z|) w(z)|^{2}\, dz}.
$$
Taking into account $(y_{n})\subset M\subset B_{\rho}$ and the Dominated Convergence Theorem, we can infer that 
$$
|\beta_{\e_{n}}(\Phi_{\e_{n}}(y_{n}))-y_{n}|=o_{n}(1)
$$
which contradicts (\ref{4.4}).

\end{proof}

\noindent
The next compactness result will be fundamental to show that the solutions of the modified problem are solutions of the original problem.
\begin{lem}\label{prop3.3}
Let $\e_{n}\rightarrow 0$ and $(u_{n})\subset \mathcal{N}_{\e_{n}}$ be such that $J_{\e_{n}}(u_{n})\rightarrow c_{V_{0}}$. Then there exists $(\tilde{y}_{n})\subset \R^{N}$ such that $v_{n}(x)=u_{n}(x+\tilde{y}_{n})$ has a convergent subsequence in $H^{s}(\R^{N})$. Moreover, up to a subsequence, $y_{n}=\e_{n} \tilde{y}_{n}\rightarrow y_{0}\in M$.
\end{lem}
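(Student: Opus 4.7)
The plan is to mimic the classical concentration-compactness strategy, adapted to the nonlocal Choquard setting: first bound $(u_n)$, then locate its mass with a translation $\tilde{y}_n$, then show that the translated centers $y_n=\e_n\tilde{y}_n$ stay in $\bar{\Lambda}$, land on a point where $V$ achieves the value $V_{0}$, and finally upgrade weak to strong convergence.

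First I would obtain boundedness of $(u_n)$ in $H^{s}_{\e_n}$: using $(g_3)$ as in \eqref{absurd} one has $J_{\e_{n}}(u_n)\geq \frac{1}{4}\|u_n\|_{\e_n}^{2}$, so $\|u_n\|_{\e_n}^{2}\leq 4c_{V_{0}}+o(1)$. Next I would establish nonvanishing: if $\sup_{y\in\R^{N}}\int_{B_{R}(y)}u_{n}^{2}\,dx\to 0$, Lemma~\ref{lions lemma} would give $u_n\to 0$ in $L^{r}(\R^{N})$ for $r\in (2,2^{*}_{s})$; combined with $(g_1),(g_2)$ and Theorem~\ref{HLS} this would force $\int(|x|^{-\mu}*G(\e_n\cdot,u_n))g(\e_n\cdot,u_n)u_n\,dx\to 0$, hence $\|u_n\|_{\e_n}\to 0$, contradicting the uniform bound $\|u_n\|_{\e_n}\geq r>0$ valid on $\mathcal{N}_{\e_n}$. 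Therefore there exist $(\tilde{y}_n)\subset\R^{N}$ and $R,\gamma>0$ with $\int_{B_{R}(\tilde{y}_n)}u_{n}^{2}\,dx\geq\gamma$. Setting $v_n(x):=u_n(x+\tilde{y}_n)$, boundedness and translation invariance of the Gagliardo seminorm yield $v_n\rightharpoonup v$ in $H^{s}(\R^{N})$ with $v\neq 0$, after passing to a subsequence.

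The heart of the argument is to locate $y_n:=\e_n\tilde{y}_n$. I would first rule out $|y_n|\to\infty$ (and more generally $y_0\notin\bar{\Lambda}$): for bounded $x$, eventually $\e_n x+y_n\notin\Lambda$, so the penalized nonlinearity $\tilde{f}$ acts on $v_n$ on any ball. Writing the equation satisfied by $v_n$ and testing against $v$ (after translation, exploiting the uniform $L^{\infty}$ bound $\|\tilde{K}_{\e_n}(u_n)\|_{\infty}\leq C_{0}<\ell_{0}/2$ from Lemma~\ref{lemK}, plus $\tilde f(t)t\leq\frac{V_{0}}{\ell_{0}}t^{2}$ and Fatou on the potential term), one arrives at
\begin{equation*}
[v]^{2}+\int_{\R^{N}}V_{\infty}v^{2}\,dx\leq \frac{V_{0}}{2}\int_{\R^{N}}v^{2}\,dx\leq \frac{1}{2}\int_{\R^{N}}V_{\infty}v^{2}\,dx,
\end{equation*}
where $V_{\infty}(x)\geq V_{0}$ is the pointwise $\liminf$ of $V(\e_n\cdot+y_n)$. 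This forces $v\equiv 0$, contradicting the nonvanishing. Hence $y_n\to y_{0}\in\bar{\Lambda}$ along a subsequence.

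To exclude $y_0\in\partial\Lambda$ and prove $V(y_0)=V_{0}$, I would use the energy lower bound together with the strict monotonicity of the ground state energy in the potential. For $y_0\in\mathrm{int}(\Lambda)$, eventually $\e_n x+y_n\in\Lambda$ for $x$ in any ball, and the limit equation for $v$ becomes the autonomous one with potential $V(y_0)$ and nonlinearity $f$; in particular $v\in\mathcal{N}_{V(y_0)}$, so $J_{V(y_0)}(v)\geq c_{V(y_0)}$. Applying Fatou to the identity $J_{\e_n}(u_n)=\frac{1}{4}\|u_n\|_{\e_n}^{2}+\frac{1}{4}\int \tilde{K}_{\e_n}(u_n)[g(\e_n\cdot,u_n)u_n-2G(\e_n\cdot,u_n)]\,dx$ (whose second integrand is nonnegative by $(g_3)$) yields $c_{V_{0}}\geq J_{V(y_0)}(v)\geq c_{V(y_0)}\geq c_{V_{0}}$, the last step using that $V(y_0)\geq V_{0}$ and the standard comparison $V_{1}\leq V_{2},V_{1}\not\equiv V_{2}\Rightarrow c_{V_{1}}<c_{V_{2}}$ for the autonomous problem (obtained by projecting a ground state for $V_{2}$ onto $\mathcal{N}_{V_{1}}$). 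All inequalities are therefore equalities, hence $V(y_0)=V_{0}$. For $y_0\in\partial\Lambda$, $(V_{2})$ gives $V(y_0)>V_{0}$; since the limit penalized problem satisfies $g_{\infty}\leq f$, $G_{\infty}\leq F$, one has $c_{\infty}\geq c_{V(y_0)}>c_{V_{0}}$, contradicting $c_{V_{0}}\geq J_{\infty}(v)\geq c_{\infty}$. So $y_0\in\mathrm{int}(\Lambda)$ and $V(y_0)=V_{0}$, i.e.\ $y_0\in M$.

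Finally, the strong convergence $v_n\to v$ in $H^{s}(\R^{N})$ follows because all Fatou inequalities saturate: $\lim[v_n]^{2}=[v]^{2}$ gives strong convergence in $\mathcal{D}^{s,2}$, and the uniform continuity of $V$ near $y_0$ combined with the tightness estimate of Step~$1$ of Lemma~\ref{PSc} gives $\int V(\e_n x+y_n)v_n^{2}\,dx\to V_{0}\int v^{2}\,dx$, whence $\int v_n^{2}\to\int v^{2}$ and strong convergence in $L^{2}$. The main obstacles I expect are (i) passing to the limit inside the nonlocal convolution kernel $\tilde K_{\e_n}(u_n)$ when $y_0\in\partial\Lambda$ (where $\chi_{\Lambda}(\e_n\cdot+y_n)$ has no pointwise limit), handled via the universal upper bound $g_{\infty}\leq f$ and the monotonicity of $c_{V}$, and (ii) ensuring the tightness argument for the $L^{2}$ piece of the norm, which requires the non-local fractional cut-off estimate from Lemma~\ref{PSc}.
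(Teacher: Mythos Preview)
There is a genuine gap. Throughout your argument you treat $v_{n}$ as if it satisfied an equation: you ``write the equation satisfied by $v_{n}$ and test against $v$'', and later you speak of ``the limit equation for $v$''. But the hypothesis only gives $u_{n}\in\mathcal{N}_{\e_{n}}$, i.e.\ the single scalar identity $\langle J'_{\e_{n}}(u_{n}),u_{n}\rangle=0$; nothing says $J'_{\e_{n}}(u_{n})=0$. So there is no equation to test, and your displayed inequality for $v$ cannot be obtained this way. If instead you try to derive the exclusion of $|y_{n}|\to\infty$ from the Nehari identity alone, you end up with
\[
[v_{n}]^{2}+\tfrac{V_{0}}{2}\int_{\R^{N}}v_{n}^{2}\,dx\leq C_{0}\int_{\R^{N}\setminus B_{R/\e_{n}}}f(v_{n})v_{n}\,dx,
\]
and you need the right-hand side to vanish; but with only weak convergence you have no tightness, so this tail need not go to zero. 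The same problem contaminates your claim that $v\in\mathcal{N}_{V(y_{0})}$, and your final upgrading step (you invoke Step~1 of Lemma~\ref{PSc}, but that argument uses $\langle J'_{\e}(u_{n}),u_{n}\psi_{R}\rangle=o_{n}(1)$, which again requires a Palais--Smale sequence, not merely a Nehari sequence).

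The paper's proof avoids all of this by a different and rather clean device: it first projects $v_{n}$ onto the \emph{autonomous} Nehari manifold, setting $\tilde v_{n}=t_{n}v_{n}\in\mathcal{N}_{V_{0}}$, and uses the chain
\[
c_{V_{0}}\leq J_{V_{0}}(\tilde v_{n})=J_{V_{0}}(t_{n}u_{n})\leq J_{\e_{n}}(t_{n}u_{n})\leq J_{\e_{n}}(u_{n})=c_{V_{0}}+o_{n}(1),
\]
where the middle inequality comes from $V\geq V_{0}$ and $G\leq F$, and the last one from the fact that $u_{n}\in\mathcal{N}_{\e_{n}}$ maximizes $t\mapsto J_{\e_{n}}(tu_{n})$. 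This gives $J_{V_{0}}(\tilde v_{n})\to c_{V_{0}}$ with $\tilde v_{n}\in\mathcal{N}_{V_{0}}$, so the compactness Lemma~\ref{Ekeland} for the autonomous problem yields strong convergence $\tilde v_{n}\to\tilde v$ (hence $v_{n}\to v$) \emph{before} any localization of $y_{n}$ is attempted. With strong convergence already in hand, the exclusion of $|y_{n}|\to\infty$ and of $V(y_{0})>V_{0}$ follow exactly by the Nehari-identity/Fatou arguments you sketched. The missing ingredient in your proposal is precisely this projection step (or some other way to manufacture a genuine Palais--Smale structure from the bare Nehari constraint).
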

\begin{proof}
Since $\langle J'_{\e_{n}}(u_{n}), u_{n}\rangle=0$ and $J_{\e_{n}}(u_{n})\rightarrow c_{V_{0}}$, we can see that $(u_{n})$ is bounded in $H^{s}_{\e_{n}}$. Note that $c_{V_{0}}>0$, and since $\|u_{n}\|_{\e_{n}}\rightarrow 0$ would imply $J_{\e_{n}}(u_{n})\rightarrow 0$, we can argue as in Lemma \ref{HZL} to obtain 
a sequence $(\tilde{y}_{n})\subset \R^{N}$, and constants $R>0$ and $\gamma>0$ such that
\begin{equation}\label{sacchi}
\liminf_{n\rightarrow \infty}\int_{B_{R}(\tilde{y}_{n})} |u_{n}|^{2} \, dx\geq \gamma>0.
\end{equation}
Now, we set $v_{n}(x)=u_{n}(x+\tilde{y}_{n})$. Then, $(v_{n})$ is bounded in $H^{s}_{0}$, and we may assume that 
$v_{n}\rightharpoonup v\not\equiv 0$ in $H^{s}_{0}$  as $n\rightarrow \infty$.
Fix $t_{n}>0$ such that $\tilde{v}_{n}=t_{n} v_{n}\in \mathcal{N}_{V_{0}}$. Since $u_{n}\in \mathcal{N}_{\e_{n}}$, we can see that 
$$
c_{V_{0}}\leq J_{V_{0}}(\tilde{v}_{n})= J_{V_{0}}(t_{n}u_{n})\leq J_{\e_{n}}(t_{n}u_{n})\leq J_{\e_{n}}(u_{n})= c_{V_{0}}+o_{n}(1)
$$
which gives $J_{V_{0}}(\tilde{v}_{n})\rightarrow c_{V_{0}}$. 
In particular, we get $\tilde{v}_{n}\rightharpoonup \tilde{v}$ in $H^{s}_{0}$ and $t_{n}\rightarrow t^{*}>0$. Then, from the uniqueness of the weak limit, we have $\tilde{v}=t^{*}v\not\equiv 0$. \\
By using Lemma \ref{Ekeland}, we can see that 
\begin{equation}\label{elena}
\tilde{v}_{n}\rightarrow \tilde{v} \mbox{ in } H^{s}_{0}.
\end{equation} 
In order to complete the proof of the lemma, we consider $y_{n}=\e_{n}\tilde{y}_{n}$. Our claim is to show that $(y_{n})$ admits a subsequence, still denoted by $y_{n}$, such that $y_{n}\rightarrow y_{0}$, for some $y_{0}\in M$. Firstly, we prove that $(y_{n})$ is bounded. We argue by contradiction, and we assume that, up to a subsequence, $|y_{n}|\rightarrow \infty$ as $n\rightarrow \infty$. 
Since
$$
\|u_{n}\|_{\e_{n}}^{2}=\int_{\R^{N}} \left(\frac{1}{|x|^{\mu}}*G(\e x, u_{n})\right)g(\e x, u_{n})u_{n},
$$
and $J_{\e_{n}}(u_{n})\rightarrow c_{V_{0}}$, we can see that $u_{n}\in \mathcal{B}$ for all $n$ big enough.
Then, in view of Lemma \ref{lemK}, there exists $C_{0}>0$ such that 
$$
\sup_{n\in \mathbb{N}} \left\| \frac{1}{|x|^{\mu}}*G(\e x, u_{n})\right\|_{L^{\infty}(\R^{N})}<C_{0}.
$$

Fixed $R>0$ such that $\Lambda \subset B_{R}(0)$, and assume that $|y_{n}|>2R$. Then, for all $z\in B_{\frac{R}{\e_{n}}}(0)$,
\begin{equation}\label{5.4AY}
|\e_{n}z+y_{n}|\geq |y_{n}|-|\e_{n}z|>R.
\end{equation} 
By using the change of variable $x\mapsto z+\tilde{y}_{n}$ and \eqref{5.4AY}, we deduce that
\begin{align}\label{pasq}
[v_{n}]^{2}+\int_{\R^{N}} V_{0} v_{n}^{2}\, dx &\leq C_{0}\int_{\R^{N}} g(\e_{n} z+y_{n}, v_{n}) v_{n} \, dx \nonumber\\
&\leq C_{0}\int_{B_{\frac{R}{\e_{n}}}(0)} \tilde{f}(v_{n}) v_{n} \, dx+C_{0}\int_{\R^{N}\setminus B_{\frac{R}{\e_{n}}}(0)} f(v_{n}) v_{n} \, dx.
\end{align}
Then, by using the fact that $v_{n}\rightarrow v$ in $H^{s}_{0}$ as $n\rightarrow \infty$ and that $\tilde{f}(t)\leq \frac{V_{0}}{\ell_{0}}t$, we can see that (\ref{pasq}) implies that
$$
[v_{n}]^{2}+\int_{\R^{N}} V_{0} v_{n}^{2}\, dx=o_{n}(1),
$$
that is $v_{n}\rightarrow 0$ in $H^{s}_{0}$, which is a contradiction. Therefore, $(y_{n})$ is bounded, and we may assume that $y_{n}\rightarrow y_{0}\in \R^{N}$. Clearly, if $y_{0}\notin \overline{\Lambda}$, then we can argue as before and we deduce that $v_{n}\rightarrow 0$ in $H^{s}_{0}$, which is impossible. Hence $y_{0}\in \overline{\Lambda}$. Now, we note that if $V(y_{0})=V_{0}$, then we can infer that $y_{0}\notin \partial \Lambda$ in view of $(V_2)$, and then $y_{0}\in M$. Therefore, in the next step, we show that $V(y_{0})=V_{0}$. Suppose by contradiction that $V(y_{0})>V_{0}$.
Then, by using $\tilde{v}_{n}\rightarrow \tilde{v}$ in $H^{s}_{0}$ and Fatou's Lemma, we get 
\begin{align*}
c_{V_{0}}=J_{V_{0}}(\tilde{v})&<\frac{1}{2} \left([\tilde{v}]^{2}+\int_{\R^{N}} V(y_{0})\tilde{v}^{2}\right)-\Sigma_{0}(\tilde{v}) \\
&\leq \liminf_{n\rightarrow \infty}\left[\frac{1}{2}[\tilde{v}_{n}]^{2}+\frac{1}{2}\int_{\R^{N}} V(\e_{n}z+y_{n}) \tilde{v}_{n}^{2} \, dx-\Sigma_{0}(\tilde{v}_{n})  \right] \\
&\leq \liminf_{n\rightarrow \infty} J_{\e_{n}}(t_{n} u_{n}) \leq \liminf_{n\rightarrow \infty} J_{\e_{n}}(u_{n})=c_{V_{0}}
\end{align*}
which gives a contradiction.

\end{proof}

\noindent
Now, we introduce a subset $\tilde{\mathcal{N}}_{\e}$ of $\mathcal{N}_{\e}$ by setting 
$$
\tilde{\mathcal{N}}_{\e}=\{u\in \mathcal{N}_{\e}: J_{\e}(u)\leq c_{V_{0}}+h(\e)\},
$$
where $h:\R_{+}\rightarrow \R_{+}$ is such that $h(\e)\rightarrow 0$ as $\e \rightarrow 0$.
Given $y\in M$, we can use Lemma \ref{lemma3.4} to conclude that $h(\e)=|J_{\e}(\Phi_{\e}(y))-c_{V_{0}}|\rightarrow 0$ as $\e \rightarrow 0$. Hence $\Phi_{\e}(y)\in \tilde{\mathcal{N}}_{\e}$, and $\tilde{\mathcal{N}}_{\e}\neq \emptyset$ for any $\e>0$. Moreover, we have the following lemma.

\begin{lem}\label{lemma3.7}
$$
\lim_{\e \rightarrow 0} \sup_{u\in \tilde{\mathcal{N}}_{\e}} dist(\beta_{\e}(u), M_{\delta})=0.
$$
\end{lem}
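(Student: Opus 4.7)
The plan is to argue by contradiction: suppose there exist $\delta_0 > 0$, sequences $\e_n \to 0$ and $u_n \in \tilde{\mathcal{N}}_{\e_n}$ such that $dist(\beta_{\e_n}(u_n), M_\delta) \geq \delta_0$ for all $n$. The key intermediate step is to show that $J_{\e_n}(u_n) \to c_{V_0}$, because once this holds the compactness Lemma \ref{prop3.3} applies to $(u_n)$ and furnishes a sequence $\tilde{y}_n \in \R^N$ such that $v_n := u_n(\cdot + \tilde{y}_n)$ converges in $H^s(\R^N)$ and $y_n := \e_n \tilde{y}_n \to y_0 \in M$; a direct computation of $\beta_{\e_n}(u_n)$ in the shifted variable will then produce $\beta_{\e_n}(u_n) \to y_0 \in M$, contradicting the standing hypothesis.

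To establish $J_{\e_n}(u_n) \to c_{V_0}$, the upper bound $J_{\e_n}(u_n) \leq c_{V_0} + h(\e_n)$ comes for free from the definition of $\tilde{\mathcal{N}}_{\e_n}$ and $h(\e_n) \to 0$. For the matching lower bound I would exploit the fact that $V(\e_n x) \geq V_0$ and, using $(f_4)$ to verify $\tilde f \leq f$, that $G(\e_n x, t) \leq F(t)$ for all $x, t$, so that $J_{V_0}(v) \leq J_{\e_n}(v)$ for every $v \in H^s(\R^N)$. Picking $t_n > 0$ with $t_n u_n \in \mathcal{N}_{V_0}$ (Lemma \ref{lemz1e0}(a)) and recalling that $t \mapsto J_{\e_n}(t u_n)$ attains its maximum at $t = 1$ because $u_n \in \mathcal{N}_{\e_n}$ (Lemma \ref{lemz1}(a)), one obtains the chain
$$
c_{V_0} \leq J_{V_0}(t_n u_n) \leq J_{\e_n}(t_n u_n) \leq J_{\e_n}(u_n) \leq c_{V_0} + h(\e_n),
$$
which forces $J_{\e_n}(u_n) \to c_{V_0}$.

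With this in hand, Lemma \ref{prop3.3} produces $\tilde{y}_n$ and $y_n = \e_n \tilde{y}_n \to y_0 \in M$ together with $v_n \to v$ in $H^s(\R^N)$, so in particular $v_n^2 \to v^2$ in $L^1(\R^N)$. The change of variable $x \mapsto z + \tilde{y}_n$ in the numerator and denominator of $\beta_{\e_n}$ yields
$$
\beta_{\e_n}(u_n) = y_n + \frac{\int_{\R^N} [\Upsilon(\e_n z + y_n) - y_n]\, v_n^2(z)\, dz}{\int_{\R^N} v_n^2(z)\, dz}.
$$
Since $M \subset B_\rho$, we have $y_0 \in B_\rho$, so $\Upsilon(y_0) = y_0$; because $\Upsilon$ is bounded and continuous, the integrand converges pointwise to $[\Upsilon(y_0) - y_0] v^2 = 0$, and the dominated convergence theorem forces the fraction to $0$. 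Consequently $\beta_{\e_n}(u_n) \to y_0 \in M \subset M_\delta$, contradicting $dist(\beta_{\e_n}(u_n), M_\delta) \geq \delta_0 > 0$.

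The main obstacle is really the step $J_{\e_n}(u_n) \to c_{V_0}$, which is precisely where the penalization structure $\tilde f \leq f$ and $V \geq V_0$ enter; once this is in place, the rest of the argument reduces to a routine application of Lemma \ref{prop3.3} and a dominated-convergence computation in the spirit of Lemma \ref{lemma3.5}.
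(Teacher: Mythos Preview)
Your proposal is correct and follows essentially the same route as the paper: reduce to a sequence $u_n\in\tilde{\mathcal N}_{\e_n}$ with $J_{\e_n}(u_n)\to c_{V_0}$, invoke Lemma~\ref{prop3.3}, and compute $\beta_{\e_n}(u_n)$ after the shift $x\mapsto z+\tilde y_n$ to see it converges into $M_\delta$. The only cosmetic differences are that the paper argues directly (choosing $u_n$ near the supremum) rather than by contradiction, and it records the lower bound simply as $c_{V_0}\le c_{\e_n}$, whereas you spell out the chain $c_{V_0}\le J_{V_0}(t_n u_n)\le J_{\e_n}(t_n u_n)\le J_{\e_n}(u_n)$ explicitly via $G\le F$ and $V\ge V_0$.
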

\begin{proof}
Let $\e_{n}\rightarrow 0$ as $n\rightarrow \infty$. For any $n\in \N$, there exists $u_{n}\in \tilde{\mathcal{N}}_{\e_{n}}$ such that
$$
\sup_{u\in \tilde{\mathcal{N}}_{\e_{n}}} \inf_{y\in M_{\delta}}|\beta_{\e_{n}}(u)-y|=\inf_{y\in M_{\delta}}|\beta_{\e_{n}}(u_{n})-y|+o_{n}(1).
$$
Therefore, it is suffices to prove that there exists $(y_{n})\subset M_{\delta}$ such that 
\begin{equation}\label{3.13}
\lim_{n\rightarrow \infty} |\beta_{\e_{n}}(u_{n})-y_{n}|=0.
\end{equation}
We note that $(u_{n})\subset  \tilde{\mathcal{N}}_{\e_{n}}\subset  \mathcal{N}_{\e_{n}}$, from which we deuce that
$$
c_{V_{0}}\leq c_{\e_{n}}\leq J_{\e_{n}}(u_{n})\leq c_{V_{0}}+h(\e_{n}).
$$
This yields $J_{\e_{n}}(u_{n})\rightarrow c_{V_{0}}$. By using Lemma \ref{prop3.3}, there exists $(\tilde{y}_{n})\subset \R^{N}$ such that $y_{n}=\e_{n}\tilde{y}_{n}\in M_{\delta}$ for $n$ sufficiently large. By setting $v_{n}=u_{n}(\cdot+\tilde{y}_{n})$ and using a change of variable, we can see that
$$
\beta_{\e_{n}}(u_{n})=y_{n}+\frac{\int_{\R^{N}}[\Upsilon(\e_{n}z+y_{n})-y_{n}] v_{n}^{2}(z) \, dz}{\int_{\R^{N}} v^{2}_{n}(z)\, dz}.
$$
Since $\e_{n} z+y_{n}\rightarrow y\in M$, we deduce that $\beta_{\e_{n}}(u_{n})=y_{n}+o_{n}(1)$, that is (\ref{3.13}) holds.

\end{proof}

\section{Proof of Theorem \ref{thmf}}
This last section is devoted to the proof of the main result of this work. Firstly, we show that \eqref{Pe} admits at least $cat_{M_{\delta}}(M)$ positive solutions.
In order to achieve our aim, we recall the following result for critical points involving Ljusternik-Schnirelmann category. For the details of the proof one can see \cite{MW}.
\begin{thm}\label{LSt}
Let $U$ be a $C^{1,1}$ complete Riemannian manifold (modelled on a Hilbert space). Assume that $h\in C^{1}(U, \R)$ bounded from below and satisfies $-\infty<\inf_{U} h<d<k<\infty$. Moreover, suppose that $h$ satisfies Palais-Smale condition on the sublevel $\{u\in U: h(u)\leq k\}$ and that $d$ is not a critical level for $h$. Then
$$ 
card\{u\in h^{d}: \nabla h(u)=0\}\geq cat_{h^{d}}(h^{d}).
$$
\end{thm}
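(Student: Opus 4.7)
The plan is to follow the classical Ljusternik–Schnirelmann minimax scheme adapted to the sublevel set $h^{d}:=\{u\in U: h(u)\le d\}$. Since $d$ lies strictly below $k$ and $h$ satisfies Palais–Smale on $\{h\le k\}$, both a deformation lemma and the finiteness of $N:=cat_{h^{d}}(h^{d})$ are available; the goal is to produce $N$ distinct critical points in $h^{d}$.

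First I would set up the minimax values. For $j=1,\dots,N$, define
\[
\mathcal{F}_{j}=\{A\subset h^{d}: A\text{ closed},\ cat_{h^{d}}(A)\ge j\},\qquad c_{j}=\inf_{A\in\mathcal{F}_{j}}\,\sup_{u\in A}h(u).
\]
Since $h^{d}\in\mathcal{F}_{N}$, each $\mathcal{F}_{j}$ is nonempty, and monotonicity of category in $j$ gives $\inf_{U}h\le c_{1}\le c_{2}\le\cdots\le c_{N}\le d$. Boundedness from below of $h$ ensures $c_{j}>-\infty$ for each $j$.

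Next I would prove that each $c_{j}$ is a critical value of $h$. The key tool is a deformation lemma built from the pseudo‑gradient flow of $h$ on $U$: because $(PS)_{c}$ holds for every $c\le k$ (hence at every $c_{j}\le d<k$) and $c_{j}$ is assumed regular, there exist $\varepsilon>0$ and a continuous $\eta:U\to U$, homotopic to the identity, with $\eta(h^{c_{j}+\varepsilon})\subset h^{c_{j}-\varepsilon}$ and $\eta(h^{d})\subset h^{d}$. Pick $A\in\mathcal{F}_{j}$ with $\sup_{A}h<c_{j}+\varepsilon$; then $\eta(A)\subset h^{c_{j}-\varepsilon}\cap h^{d}$ and the homotopy invariance of category yields $cat_{h^{d}}(\eta(A))\ge cat_{h^{d}}(A)\ge j$, contradicting the definition of $c_{j}$. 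Hence every $c_{j}$ is critical.

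The step I expect to be most delicate is the multiplicity argument when several $c_{j}$'s coincide, since then distinctness alone does not produce $N$ points. Suppose $c=c_{j}=c_{j+1}=\cdots=c_{j+r}$ and let $K_{c}=\{u\in h^{d}: h(u)=c,\ \nabla h(u)=0\}$. By $(PS)_{c}$, $K_{c}$ is compact. I would then invoke the sharper deformation lemma near a critical set: if $cat_{h^{d}}(K_{c})\le r$, there is an open neighborhood $\mathcal{U}\supset K_{c}$ with $cat_{h^{d}}(\overline{\mathcal{U}})\le r$ and a deformation sending $h^{c+\varepsilon}\setminus\mathcal{U}$ into $h^{c-\varepsilon}$; subadditivity $cat(A)\le cat(A\setminus\mathcal{U})+cat(\overline{\mathcal{U}})$ applied to $A\in\mathcal{F}_{j+r}$ with $\sup_{A}h<c+\varepsilon$ forces $c_{j}\le c-\varepsilon$, a contradiction. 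Therefore $cat_{h^{d}}(K_{c})\ge r+1$. Since singletons are contractible in $h^{d}$, a finite set of cardinality $m$ has category at most $m$; if $K_{c}$ were infinite we would already have infinitely many critical points, and otherwise $|K_{c}|\ge r+1$. Summing over all clusters of coincident levels produces at least $N=cat_{h^{d}}(h^{d})$ distinct critical points in $h^{d}$, which is the claim. The main obstacle in writing this out carefully is the construction of the deformation in a neighborhood of a critical set on a Riemannian manifold modelled on a Hilbert space, where one must carefully patch a pseudo‑gradient vector field with a cutoff around $K_{c}$ and verify that its flow stays in $h^{d}$ for small time.
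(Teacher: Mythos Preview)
Your sketch is the classical Ljusternik--Schnirelmann minimax argument and is essentially correct; however, the paper does not give its own proof of this theorem at all. It merely states the result and refers the reader to \cite{MW} (Mawhin--Willem) for the details. So there is nothing to compare against: your proposal \emph{is} the standard proof that the cited reference contains, carried out via the minimax values $c_{j}$, a deformation lemma under the Palais--Smale condition on the sublevel $\{h\le k\}$, and the usual multiplicity argument $cat_{h^{d}}(K_{c})\ge r+1$ when $r+1$ consecutive values coincide.
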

\noindent
Since $\mathcal{N}_{\e}$ is not a $C^{1}$ submanifold of $H^{s}_{\e}$, we cannot apply Theorem \ref{LSt} directly. Fortunately, from Lemma \ref{lemz1}, we know that the mapping $m_{\e}$ is a homeomorphism between $\mathcal{N}_{\e}$ and $\mathbb{S}_{\e}$, and $\mathbb{S}_{\e}$ is a $C^{1}$ submanifold of $H^{s}_{\e}$. So we can apply Theorem \ref{LSt} to 
$\psi_{\e}(u)=J_{\e}(\hat{m}_{\e}(u))|_{\mathbb{S}_{\e}}=J_{\e}(m_{\e}(u))$, where $\psi_{\e}$ is given in Proposition \ref{propz2}.
\begin{thm}\label{teorema}
Assume that $(V_1)$-$(V_2)$ and $(f_1)$-$(f_4)$ hold. Then, for any $\delta>0$ there exists $\bar{\e}_\delta>0$ such that, for any $\e \in (0, \bar{\e}_\delta)$, problem $(\ref{Pe})$ has at least $cat_{M_{\delta}}(M)$ positive solutions.   
\end{thm}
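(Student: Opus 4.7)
The plan is to transfer the variational problem to the $C^{1}$-manifold $\mathbb{S}_{\e}$ via the homeomorphism $m_{\e}$ of Lemma \ref{lemz1}, and then apply the abstract Ljusternik-Schnirelmann theorem (Theorem \ref{LSt}) to the functional $\psi_{\e}$. The heart of the argument is a Benci--Cerami type comparison, via the maps $\Phi_{\e}$ and $\beta_{\e}$, between the category of a suitable sublevel set of $\psi_{\e}$ and the category of $M$ in $M_{\delta}$.

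Set $h(\e) := \sup_{y\in M} |J_{\e}(\Phi_{\e}(y)) - c_{V_{0}}|$, which tends to $0$ as $\e\to 0$ by Lemma \ref{lemma3.4}. Define $\tilde{\Phi}_{\e}:M\to \mathbb{S}_{\e}$ by $\tilde{\Phi}_{\e}(y):= m_{\e}^{-1}(\Phi_{\e}(y))$, which is continuous by Lemma \ref{lemz1}-(c), and consider the sublevel set
$$\Sigma_{\e} := \{u\in \mathbb{S}_{\e}\,:\, \psi_{\e}(u) \le c_{V_{0}} + h(\e)\}.$$
Since $\psi_{\e}(\tilde{\Phi}_{\e}(y)) = J_{\e}(\Phi_{\e}(y))$ by Proposition \ref{propz2}-(d), one has $\tilde{\Phi}_{\e}(M)\subset \Sigma_{\e}$. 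Dually, define $\tilde{\beta}_{\e}:\Sigma_{\e}\to \R^{N}$ by $\tilde{\beta}_{\e}(u) := \beta_{\e}(m_{\e}(u))$. For $\e$ small, $m_{\e}(\Sigma_{\e})\subset \tilde{\mathcal{N}}_{\e}$, so Lemma \ref{lemma3.7} yields $\tilde{\beta}_{\e}(\Sigma_{\e})\subset M_{\delta}$.

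The key topological step is a homotopy between $\tilde{\beta}_{\e}\circ \tilde{\Phi}_{\e}$ and the inclusion $M\hookrightarrow M_{\delta}$. By Lemma \ref{lemma3.5}, $\beta_{\e}(\Phi_{\e}(y))\to y$ uniformly on the compact set $M$, so for $\e$ sufficiently small $|\beta_{\e}(\Phi_{\e}(y))-y|<\delta/2$ for every $y\in M$. In particular, the straight-line homotopy
$$H:[0,1]\times M\to M_{\delta},\qquad H(t,y) := (1-t)\,y + t\,\tilde{\beta}_{\e}(\tilde{\Phi}_{\e}(y)),$$
stays inside $M_{\delta}$. A standard category argument (see \cite{BC}) then yields
$$cat_{\Sigma_{\e}}(\Sigma_{\e}) \;\geq\; cat_{M_{\delta}}(M).$$

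Finally, shrink $\e$ so that $c_{V_{0}}+h(\e)<\kappa$. Lemma \ref{lemma2.10} then delivers the $(PS)_{d}$ condition for $\psi_{\e}$ at every level $d\in[c_{\e}, c_{V_{0}}+h(\e)]$, while Proposition \ref{propz2}-(d) together with the remark after Lemma \ref{lemz1} give $\psi_{\e}\geq c_{\e}>0$ on $\mathbb{S}_{\e}$. Theorem \ref{LSt}, applied to $\psi_{\e}$ on $\mathbb{S}_{\e}$ with the sublevel $\Sigma_{\e}$, produces at least $cat_{\Sigma_{\e}}(\Sigma_{\e})\geq cat_{M_{\delta}}(M)$ critical points of $\psi_{\e}$; by Proposition \ref{propz2}-(d) these correspond to as many critical points of $J_{\e}$, hence to nontrivial weak solutions of \eqref{Pe}. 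Positivity is obtained by testing \eqref{Pe} with $u^{-}$: since $g(\cdot,t)=0$ for $t\leq 0$ (recall $f(t)=0$ for $t<0$ and $\tilde{f}(t)=f(t)$ near zero), the right-hand side vanishes, while the inequality $(a-b)(a^{-}-b^{-})\leq -(a^{-}-b^{-})^{2}$ combined with $V>0$ forces $\|u^{-}\|_{\e}=0$; strict positivity then follows from the regularity/Harnack argument sketched in the closing part of Lemma \ref{BBMP}.

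The main obstacle is the simultaneous calibration of $\e$: one needs $h(\e)$ small enough that Lemma \ref{lemma2.10} furnishes $(PS)$ on the sublevel, Lemma \ref{lemma3.7} ensures that $\tilde{\beta}_{\e}$ maps into $M_{\delta}$, and Lemma \ref{lemma3.5} permits the straight-line homotopy to remain inside $M_{\delta}$. All three are $\e\to 0$ statements and must be satisfied together; the threshold $\bar{\e}_{\delta}$ is taken as their common minimum.
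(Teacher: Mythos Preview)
Your proof is correct and follows essentially the same Benci--Cerami strategy as the paper: transfer to $\mathbb{S}_{\e}$ via $m_{\e}$, build the diagram $M\to\tilde{\mathcal N}_{\e}\to\tilde{\mathbb S}_{\e}\to\tilde{\mathcal N}_{\e}\to M_{\delta}$, produce the straight-line homotopy to the inclusion, and invoke Theorem \ref{LSt} together with Proposition \ref{propz2}-(d). One minor point: the paper additionally arranges (by adjusting $h(\e)$) that $c_{V_{0}}+h(\e)$ is not a critical level for $J_{\e}$, which is a hypothesis of Theorem \ref{LSt} that you should make explicit; conversely, you supply a clean positivity argument that the paper omits in this particular proof.
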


\begin{proof}
For any $\e>0$, we define $\alpha_\e : M \rightarrow \mathbb{S}_{\e}$ by setting $\alpha_\e(y)= m_{\e}^{-1}(\Phi_{\e}(y))$. By using Lemma \ref{lemma3.4} and the definition of $\psi_{\e}$, we can see that
\begin{equation*}
\lim_{\e \rightarrow 0} \psi_{\e}(\alpha_{\e}(y)) = \lim_{\e \rightarrow 0} J_{\e}(\Phi_{\e}(y))= c_{V_{0}} \mbox{ uniformly in } y\in M. 
\end{equation*}  
Then, there exists $\bar{\e}>0$ such that $\tilde{\mathbb{S}}_{\e}:=\{ w\in \mathbb{S}_{\e} : \psi_{\e}(w) \leq c_{V_{0}} + h(\e)\} \neq \emptyset$ for all $\e \in (0, \bar{\e})$. \\
Taking into account Lemma \ref{lemma3.4}, Lemma \ref{lemz1}-(c), Lemma \ref{lemma3.5} and Lemma \ref{lemma3.7}, we can find $\bar{\e}= \bar{\e}_{\delta}>0$ such that the following diagram
\begin{equation*}
M\stackrel{\Phi_{\e}}{\rightarrow} \tilde{\mathcal{N}}_{\e} \stackrel{m_{\e}^{-1}}{\rightarrow} \tilde{\mathbb{S}}_{\e} \stackrel{m_{\e}}{\rightarrow} \tilde{\mathcal{N}}_{\e} \stackrel{\beta_{\e}}{\rightarrow} M_{\delta}
\end{equation*}    
is well defined for any $\e \in (0, \bar{\e})$. \\
By using Lemma \ref{lemma3.5}, there exists a function $\theta(\e, y)$ with $|\theta(\e, y)|<\frac{\delta}{2}$ uniformly in $y\in M$ for all $\e \in (0, \bar{\e})$ such that $\beta_{\e}(\Phi_{\e}(y))= y+ \theta(\e, y)$ for all $y\in M$. Then, we can see that $H(t, y)= y+ (1-t)\theta(\e, y)$ with $(t, y)\in [0,1]\times M$ is a homotopy between $\beta_{\e} \circ \Phi_{\e}=(\beta_{\e}\circ m_{\e}) \circ \alpha_{\e}$ and the inclusion map $id: M \rightarrow M_{\delta}$. This fact and Lemma $4.3$ in \cite{BC} implies that $cat_{\tilde{\mathbb{S}}_{\e}} (\tilde{\mathbb{S}}_{\e})\geq cat_{M_{\delta}}(M)$. On the other hand, let us choose a function $h(\e)>0$ such that $h(\e)\rightarrow 0$ as $\e\rightarrow 0$ and such that $c_{V_{0}}+h(\e)$ is not a critical level for $J_{\e}$. For $\e>0$ small enough, we deduce from Lemma \ref{lemma2.10} that $\psi_{\e}$ satisfies the Palais-Smale condition in $\tilde{\mathbb{S}}_{\e}$. So, it follows from Theorem \ref{LSt} that $\psi_{\e}$ has at least $cat_{\tilde{\mathbb{S}}_{\e}}(\tilde{\mathbb{S}}_{\e})$ critical points on $\tilde{\mathbb{S}}_{\e}$. By Proposition \ref{propz2}-(d) we conclude that $J_{\e}$ admits at least $cat_{M_{\delta}}(M)$ critical points. 
\end{proof}
\noindent

Now, we use a Moser iteration argument \cite{Moser} which will be fundamental to study of behavior of the maximum points of the solutions.
\begin{lem}\label{lemMoser}
Let $\e_{n}\rightarrow 0$ and $u_{n}\in \widetilde{\mathcal{N}}_{\e_{n}}$ be a solution to \eqref{Pe}. Then $v_{n}=u_{n}(\cdot+\tilde{y}_{n})$ satisfies the following problem
\begin{equation}\label{Pn}
\left\{
\begin{array}{ll}
(-\Delta)^{s}v_{n} + V_{n}(x) v_{n}= \left(\frac{1}{|x|^{\mu}}*G_{n}(v_{n})\right)g_{n}(v_{n}) &\mbox{ in } \R^{N}\\
v_{n}\in H^{s}(\R^{N})\\
v_{n}>0 &\mbox{ in } \R^{N}, 
\end{array}
\right.
\end{equation}
where $V_{n}(x)= V(\e_{n}x+ \e_{n}\tilde{y}_{n})$, $g_{n}(v_{n})=g(\e_{n} x+\e_{n}\tilde{y}_{n}, v_{n})$, $\e_{n}\tilde{y}_{n}\rightarrow y\in M$, and 
there exists $C>0$ such that 
$\|v_{n}\|_{L^{\infty}(\R^{N})}\leq C$ for all $n\in \mathbb{N}$. 

\end{lem}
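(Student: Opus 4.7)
The first two assertions are immediate from what has already been established. The fact that $v_n=u_n(\cdot+\tilde{y}_n)$ solves \eqref{Pn} is a direct change of variable in \eqref{Pe}, noting that the fractional Laplacian commutes with translations and that the convolution kernel $|x-y|^{-\mu}$ is translation invariant. The convergence $\e_n\tilde{y}_n\to y\in M$, as well as the strong convergence $v_n\to v$ in $H^s(\R^{N})$ with $v\not\equiv 0$, are given by Lemma \ref{prop3.3}. Hence the whole content of the lemma reduces to proving the uniform $L^{\infty}$-bound, which I would attack by a Moser iteration scheme.

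The first step is to control the Choquard kernel. Writing $K_n(x)=\bigl(\frac{1}{|x|^\mu}*G_n(v_n)\bigr)(x)$ and performing the substitution $z=y+\tilde{y}_n$, one sees that $K_n(x)=\tilde{K}_{\e_n}(u_n)(x+\tilde{y}_n)$. Since $J_{\e_n}(u_n)\to c_{V_{0}}$ and $\langle J'_{\e_n}(u_n),u_n\rangle=0$, the argument used in Lemma \ref{PSc} (the inequality $J_{\e_n}(u_n)-\tfrac{1}{4}\langle J'_{\e_n}(u_n),u_n\rangle\ge \tfrac14\|u_n\|_{\e_n}^{2}$) gives $u_n\in\B$ for $n$ large; Lemma \ref{lemK} then yields a constant $C_{0}>0$ independent of $n$ with $\|K_n\|_{L^{\infty}(\R^{N})}\le C_0$. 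Consequently \eqref{Pn} can be rewritten as
\begin{equation*}
(-\Delta)^{s}v_n+V_n(x)v_n=h_n(x),\qquad |h_n(x)|\le C\bigl(|v_n|+|v_n|^{q-1}\bigr),
\end{equation*}
with $q<2^{*}_s$, so we are in the standard fractional subcritical setting.

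For the iteration, fix $\beta\ge 1$, $L>0$, set $v_{n,L}=\min\{v_n,L\}$, and test \eqref{Pn} with $\varphi=v_n v_{n,L}^{2(\beta-1)}\in H^{s}(\R^{N})$. The algebraic inequality
\begin{equation*}
(a-b)\bigl[a\,\gamma_L(a)^{2(\beta-1)}-b\,\gamma_L(b)^{2(\beta-1)}\bigr]\;\ge\;\tfrac{1}{\beta}\bigl[a\,\gamma_L(a)^{\beta-1}-b\,\gamma_L(b)^{\beta-1}\bigr]^{2},
\end{equation*}
valid for $a,b\ge 0$ with $\gamma_L(t)=\min\{t,L\}$, together with the Sobolev embedding $H^{s}(\R^{N})\hookrightarrow L^{2^{*}_{s}}(\R^{N})$ (Theorem \ref{Sembedding}), converts the bilinear form into a lower bound of the form $\tfrac{S_*}{\beta}\|v_n v_{n,L}^{\beta-1}\|_{L^{2^{*}_{s}}}^{2}$. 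Using $|h_n|\le C(|v_n|+|v_n|^{q-1})$ and Hölder's inequality on the right-hand side (absorbing the term with $v_n^2$ in the left-hand side, and splitting the critical growth contribution into a small $L^{N/(2s)}$-norm on $\{v_n>A\}$ plus a bounded remainder), one obtains the Moser recursion
\begin{equation*}
\bigl\|v_n v_{n,L}^{\beta-1}\bigr\|_{L^{2^{*}_{s}}(\R^{N})}^{2}\;\le\; C\beta\bigl\|v_n v_{n,L}^{\beta-1}\bigr\|_{L^{2}(\R^{N})}^{2},
\end{equation*}
with $C$ independent of $n$ and $L$. Letting $L\to\infty$ by Fatou and iterating with $\beta_k=(2^{*}_{s}/2)^{k}$ from the base case $v_n\in L^{2^{*}_{s}}(\R^{N})$ (uniformly in $n$, since $(v_n)$ is bounded in $H^{s}$), a standard bootstrap gives $\|v_n\|_{L^{\infty}(\R^{N})}\le C$ independently of $n$.

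The main obstacle is the rigorous handling of the Moser test function with the nonlocal operator: unlike the classical case, $(-\Delta)^{s}$ does not satisfy a chain rule, so one must justify $v_n v_{n,L}^{2(\beta-1)}\in H^{s}(\R^{N})$ and use the pointwise convexity inequality above (which replaces the usual integration by parts) to generate the controlling $L^{2^{*}_s}$-norm. Once this is in place, absorbing the subcritical term with small $L^{N/(2s)}$-norm on super-level sets (which tends to zero uniformly in $n$ because $(v_n)$ is bounded in $L^{2^{*}_s}$) is the only delicate estimate; the rest is bookkeeping in the iteration constants.
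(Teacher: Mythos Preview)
Your proof is correct in its overall architecture and matches the paper's strategy: bound the convolution kernel $K_n$ uniformly via Lemma~\ref{lemK} (using $u_n\in\B$ for large $n$) so that \eqref{Pn} becomes a local subcritical equation, then run a Moser iteration with the truncated test functions $v_n v_{n,L}^{2(\beta-1)}$, relying on the pointwise convexity inequality to control the nonlocal bilinear form.

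The one real difference is in the H\"older step. The paper writes
\[
\int_{\R^{N}} |v_n|^{q}\,v_{n,L}^{2(\beta-1)}\,dx\;\le\;\|v_n\|_{L^{2^*_s}}^{\,q-2}\,\|w_{L,n}\|_{L^{\alpha^*_s}}^{2},\qquad \alpha^*_s:=\frac{2\,2^*_s}{2^*_s-(q-2)}\in(2,2^*_s),
\]
which gives the recursion $\|w_{L,n}\|_{L^{2^*_s}}^{2}\le C\beta^{2}\|w_{L,n}\|_{L^{\alpha^*_s}}^{2}$ with $C$ independent of $\beta$, and iterates with ratio $2^*_s/\alpha^*_s$. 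You instead split on $\{v_n>A\}$ and absorb a small $L^{N/(2s)}$ piece into the Sobolev term on the left. That is the device tailored to \emph{critical} growth; here the growth is strictly subcritical, so it is both unnecessary and slightly delicate: since the Sobolev term on the left carries a factor of order $\beta^{-2}$, absorbing forces $A$ to depend on $\beta$, and then the residual constant $A^{q-2}$ also depends on $\beta$ and must be tracked through the iteration. This can be fixed (one first step with fixed $A$ yields $v_n\in L^{p}$ uniformly for some $p>2^*_s$, after which a standard iteration with $\beta$-independent constants applies), but the paper's direct subcritical H\"older bypasses the issue entirely and gives the cleaner bootstrap.
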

\begin{proof}
For any $L>0$ and $\beta>1$, let us define the function 
\begin{equation*}
\gamma(v_{n})=\gamma_{L, \beta}(v_{n})=v_{n} v_{L, n}^{2(\beta-1)}\in \h
\end{equation*}
where  $v_{L,n}=\min\{u_{n}, L\}$. 
Since $\gamma$ is an increasing function, we have
\begin{align*}
(a-b)(\gamma(a)- \gamma(b))\geq 0 \quad \mbox{ for any } a, b\in \R.
\end{align*}
Let us consider 
\begin{equation*}
\mathcal{E}(t)=\frac{|t|^{2}}{2} \quad \mbox{ and } \quad \Gamma(t)=\int_{0}^{t} (\gamma'(\tau))^{\frac{1}{2}} d\tau. 
\end{equation*}
Then, by applying Jensen inequality we get for all $a, b\in \R$ such that $a>b$,
\begin{align*}
\mathcal{E}'(a-b)(\gamma(a)-\gamma(b)) &=(a-b) (\gamma(a)-\gamma(b))= (a-b) \int_{a}^{b} \gamma'(t) dt \\
&= (a-b) \int_{a}^{b} (\Gamma'(t))^{2} dt \geq \left(\int_{a}^{b} (\Gamma'(t)) dt\right)^{2}.
\end{align*}
The same argument works when $a\leq b$. Therefore
\begin{equation}\label{Gg}
\mathcal{E}'(a-b)(\gamma(a)-\gamma(b))\geq |\Gamma(a)-\Gamma(b)|^{2} \mbox{ for any } a, b\in\R. 
\end{equation}
By using \eqref{Gg}, we can see that
\begin{align}\label{Gg1}
|\Gamma(v_{n})(x)- \Gamma(v_{n})(y)|^{2} \leq (v_{n}(x)- v_{n}(y))((v_{n}v_{L,n}^{2(\beta-1)})(x)- (v_{n}v_{L,n}^{2(\beta-1)})(y)). 
\end{align}
Choosing $\gamma(v_{n})=v_{n} v_{L, n}^{2(\beta-1)}$ as test-function in \eqref{Pn}, and using \eqref{Gg1}, we obtain
\begin{align}\label{BMS}
&[\Gamma(v_{n})]^{2}+\int_{\R^{N}} V_{n}(x)|v_{n}|^{2}v_{L, n}^{2(\beta-1)} dx \nonumber \\
&\leq \iint_{\R^{2N}} \frac{(v_{n}(x)- v_{n}(y))}{|x-y|^{N+2s}} ((v_{n}v_{L, n}^{2(\beta-1)})(x)-(v_{n} v_{L,n}^{2(\beta-1)})(y)) \,dx dy +\int_{\R^{N}} V_{n}(x)|v_{n}|^{2}v_{L,n}^{2(\beta-1)} dx \nonumber\\
&=\int_{\R^{N}} \left(\frac{1}{|x|^{\mu}}*G_{n}(v_{n}) \right)g_{n}(v_{n}) v_{n} v_{L,n}^{2(\beta-1)} dx.
\end{align}
Since 
$$
\Gamma(v_{n})\geq \frac{1}{\beta} v_{n} v_{L,n}^{\beta-1},
$$ 
and using Theorem \ref{Sembedding}, we have
\begin{equation}\label{SS1}
[\Gamma(v_{n})]^{2}\geq S_{*} \|\Gamma(v_{n})\|^{2}_{L^{\p}(\R^{N})}\geq \left(\frac{1}{\beta}\right)^{2} S_{*}\|v_{n} v_{L,n}^{\beta-1}\|^{2}_{L^{\p}(\R^{N})}.
\end{equation}
On the other hand, from the boundedness of $(v_{n})$, it follows that there exists $C_{0}>0$ such that
\begin{equation}\label{GbAY}
\sup_{n\in \mathbb{N}} \left\|\frac{1}{|x|^{\mu}}*G(\e x, v_{n}) \right\|_{L^{\infty}(\R^{N})}<C_{0}.
\end{equation}
By the assumption $(g_1)$ and $(g_2)$, for any $\xi>0$ there exists $C_{\xi}>0$ such that
\begin{equation}\label{SS2}
|g_{n}(v_{n})|\leq \xi |v_{n}|+C_{\xi}|v_{n}|^{q-1}.
\end{equation}
Taking $\xi\in (0, V_{0})$, and using \eqref{SS1}, \eqref{GbAY} and \eqref{SS2}, we can see that \eqref{BMS} yields
\begin{align*}
\|v_{n} v_{L,n}^{\beta-1}\|^{2}_{L^{\p}(\R^{N})}&\leq C\beta^{2} \int_{\R^{N}} |v_{n}|^{q} v_{L,n}^{2(\beta-1)} dx. 
\end{align*}
Set $w_{L,n}:=v_{n} v_{L,n}^{\beta-1}$. By applying H\"older inequality, we get
\begin{align*}
\|w_{L,n}\|_{L^{\p}(\R^{N})}^{2}\leq C \beta^{2}  \left(\int_{\R^{N}} v_{n}^{\p}\, dx\right)^{\frac{q-2}{\p}} \left(\int_{\R^{N}} w_{L,n}^{\alpha^{*}_{s}} \, dx\right)^{\frac{2}{\alpha^{*}_{s}}}
\end{align*}
where $\alpha^{*}_{s}:=\frac{2 \p}{\p-(q-2)} \in (2, \p)$. \\
Since $(v_{n})$ is bounded in $\h$, we deduce that
\begin{align}\label{conto4}
\|w_{L,n}\|_{L^{\p}(\R^{N})}^{2}\leq C \beta^{2} \|w_{L,n}\|_{L^{\alpha^{*}_{s}}(\R^{N})}^{2}. 
\end{align}
Now, we observe that if $v_{n}^{\beta}\in L^{\alpha^{*}_{s}}(\R^{N})$, from the definition of $w_{L,n}$, and by using the fact that $v_{L,n}\leq v_{n}$ and  (\ref{conto4}), we obtain
\begin{align}\label{conto5}
\|w_{L,n}\|_{L^{\p}(\R^{N})}^{2}\leq C \beta^{2}  \left(\int_{\R^{N}} v_{n}^{\beta \alpha^{*}_{s}}\, dx\right)^{\frac{2}{\alpha^{*}_{s}}}<\infty.
\end{align}
By passing to the limit in (\ref{conto5}) as $L \rightarrow +\infty$, the Fatou's Lemma yields
\begin{align}\label{conto6}
\|v_{n}\|_{L^{\beta \p}(\R^{N})}\leq C^{\frac{1}{\beta}} \beta^{\frac{1}{\beta}} \|v_{n}\|_{L^{\beta \alpha^{*}_{s}}(\R^{N})}
\end{align}
whenever $v_{n}^{\beta \alpha^{*}_{s}}\in L^{1}(\R^{N})$. 

Now, we set $\beta:=\frac{2^{*}_{s}}{\alpha^{*}_{s}}>1$, and we observe that, being $v_{n}\in L^{2^{*}_{s}}(\R^{N})$, the above inequality holds for this choice of $\beta$. Then, observing that $\beta^{2}\alpha^{*}_{s}=\beta 2^{*}_{s}$, it follows that \eqref{conto6} holds with $\beta$ replaced by $\beta^{2}$.
Therefore, we can see that
\begin{align*}
\|v_{n}\|_{L^{\beta^{2} 2^{*}_{s}}(\R^{N})}\leq C^{\frac{1}{\beta^{2}}} \beta^{\frac{2}{\beta^{2}}} \|v_{n}\|_{L^{\beta^{2} \alpha^{*}_{s}}(\R^{N})}\leq  C^{\left(\frac{1}{\beta}+\frac{1}{\beta^{2}}\right)} \beta^{\frac{1}{\beta}+\frac{2}{\beta^{2}}} \|v_{n}\|_{L^{\beta \alpha^{*}_{s}}(\R^{N})}.
\end{align*}
Iterating this process, and recalling that $\beta \alpha^{*}:=2^{*}_{s}$, we can infer that for every $m\in \mathbb{N}$
\begin{align}\label{conto7}
\|v_{n}\|_{L^{\beta^{m} 2^{*}_{s}}(\R^{N})}\leq C^{\sum_{j=1}^{m}\frac{1}{\beta^{j}}} \beta^{\sum_{j=1}^{m} j\beta^{-j}} \|v_{n}\|_{L^{2^{*}_{s}}(\R^{N})}.
\end{align}
Taking the limit in (\ref{conto7}) as $m \rightarrow +\infty$ and recalling that $\|v_{n}\|_{L^{\p}(\R^{N})}\leq K$, we get
\begin{align*}
\|v_{n}\|_{L^{\infty}(\R^{N})}\leq C^{\sigma_{1}} \beta^{\sigma_{2}}K,
\end{align*}
where 
$$
\sigma_{1}:=\sum_{j=1}^{\infty}\frac{1}{\beta^{j}}<\infty \quad \mbox{ and } \quad \sigma_{2}:=\sum_{j=1}^{\infty}\frac{j}{\beta^{j}}<\infty.
$$
\end{proof}

\noindent
At this point, we are ready to give the proof of our main result.
\begin{proof}[Proof of Theorem \ref{thmf}]
Take $\delta>0$ such that $M_\delta \subset \Lambda$. We begin proving that there exists $\tilde{\e}_{\delta}>0$ such that for any $\e \in (0, \tilde{\e}_{\delta})$ and any solution $u_{\e} \in \tilde{\mathcal{N}}_{\e}$ of \eqref{Pe}, it holds 
\begin{equation}\label{infty}
\|u_{\e}\|_{L^{\infty}(\R^{N}\setminus \Lambda_{\e})}<a. 
\end{equation}
Assume by contradiction that there exist $\e_{n}\rightarrow 0$, $u_{\e_{n}}\in \tilde{\mathcal{N}}_{\e_{n}}$ such that $J'_{\e_{n}}(u_{\e_{n}})=0$ and $\|u_{\e_{n}}\|_{L^{\infty}(\R^{N}\setminus \Lambda_{\e_{n}})}\geq a$. 
Since $J_{\e_{n}}(u_{\e_{n}}) \leq c_{V_{0}} + h(\e_{n})$ and $h(\e_{n})\rightarrow 0$, we can argue as in the  first part of the proof of Lemma \ref{prop3.3}, to deduce that $J_{\e_{n}}(u_{\e_{n}})\rightarrow c_{V_{0}}$.
Then, by using Lemma \ref{prop3.3}, we can find $(\tilde{y}_{n})\subset \R^{N}$ such that $y_{n}:=\e_{n}\tilde{y}_{n}\rightarrow y_{0} \in M$. \\
Now, if we choose $r>0$ such that $B_{r}(y_{0})\subset B_{2r}(y_{0})\subset \Lambda$, we can see $B_{\frac{r}{\e_{n}}}(\frac{y_{0}}{\e_{n}})\subset \Lambda_{\e_{n}}$. In particular, for any $y\in B_{\frac{r}{\e_{n}}}(\tilde{y}_{n})$ there holds
\begin{equation*}
\left|y - \frac{y_{0}}{\e_{n}}\right| \leq |y- \tilde{y}_{n}|+ \left|\tilde{y}_{n} - \frac{y_{0}}{\e_{n}}\right|<\frac{2r}{\e_{n}}\, \mbox{ for } n \mbox{ sufficiently large. }
\end{equation*}
Therefore 
$$
\R^{N}\setminus \Lambda_{\e_{n}}\subset \R^{N} \setminus B_{\frac{r}{\e_{n}}}(\tilde{y}_{n}) \mbox{ for any } n \mbox{ big enough. }
$$ 
Now, denoting by $v_{n}(x)=u_{\e_{n}}(x+ \tilde{y}_{n})$, we can see that 
$$
(-\Delta)^{s}v_{n}+v_{n}=h_{n} \mbox{ in } \R^{N},
$$
where
$$
h_{n}:=v_{n}-V_{n}(x)v_{n}+\left(\frac{1}{|x|^{\mu}}*G_{n}(v_{n})\right)g_{n}(v_{n}),
$$
and $v_{n}\rightarrow v$ converges strongly in $L^{p}(\R^{N})$ for any $p\in [2, \infty)$, in view of Lemma \ref{lemMoser}.

Since $\langle J'_{\e_{n}}(u_{\e_{n}}), u_{\e_{n}}\rangle=0$ and $J_{\e_{n}}(u_{\e_{n}})\rightarrow c_{V_{0}}$, we may assume $u_{\e_{n}}\in \mathcal{B}$ for all $n$ big enough, so that
\begin{equation*}
\left\|\frac{1}{|x|^{\mu}}*G(\e x, u_{\e_{n}})\right\|_{L^{\infty}(\R^{N})}<C_{0}.
\end{equation*}
As a consequence, recalling that $\e_{n}\tilde{y}_{n}\rightarrow y_{0}\in M$, we get
$$
\|h_{n}\|_{L^{\infty}(\R^{N})}\leq C\quad \mbox{ and } \quad h_{n}\rightarrow v-V(y_{0})v+\left(\frac{1}{|x|^{\mu}}*F(v)\right)f(v) \mbox{ in } L^{p}(\R^{N}) \quad \forall p\in [2, \infty).
$$
Hence, $v_{n}=\mathcal{K}*h_{n}$, where $\mathcal{K}$ is the Bessel kernel \cite{FQT}, and we can argue as in \cite{AM} to prove that 
$$
\lim_{|x|\rightarrow \infty} \sup_{n\in \mathbb{N}} |v_{n}(x)|=0,
$$
which implies that there exists $R>0$ such that $v_{n}(x)<a$ for $|x|\geq R$ and $n\in \N$. \\
Thus 
$$
u_{\e_{n}}(x)<a \mbox{ for any } x\in \R^{N}\setminus B_{R}(\tilde{y}_{n}), \, n\in \N.
$$ 
As a consequence, there exists $\nu \in \N$ such that for any $n\geq \nu$ and $\frac{r}{\e_{n}}>R$, it holds 
$$
\R^{N}\setminus \Lambda_{\e_{n}}\subset \R^{N} \setminus B_{\frac{r}{\e_{n}}}(\tilde{y}_{n})\subset \R^{N}\setminus B_{R}(\tilde{y}_{n}),
$$
which gives $u_{\e_{n}}(x)<a$ for any $x\in \R^{N}\setminus \Lambda_{\e_{n}}$, and this is impossible. \\
Now, let $\bar{\e}_{\delta}$ given in Theorem \ref{teorema} and take $\e_{\delta}= \min \{\tilde{\e}_{\delta}, \bar{\e}_{\delta}\}$. Fix $\e \in (0, \e_{\delta})$. By Theorem \ref{teorema}, we know that problem \eqref{Pe} admits $cat_{M_{\delta}}(M)$ nontrivial solutions $u_{\e}$. Since $u_{\e}\in \tilde{\mathcal{N}}_{\e}$ satisfies \eqref{infty}, from the definition of $g$ it follows that $u_{\e}$ is a solution of \eqref{R}. \\
Now, we study the behavior of the maximum points of $u_{n}\in H^{s}_{\e_{n}}$ solutions to the problem \eqref{Pe}.
Let us observe that $(g_1)$ implies that we can find $\gamma\in (0, a)$ such that 
\begin{equation}\label{4.4FS}
g(\e x, t)t\leq \frac{V_{0}}{\ell_{0}} t^{2} \mbox{ for any } x\in \R^{N}, t\leq \gamma.
\end{equation}
Arguing as before, we can find $R>0$ such that 
\begin{equation}\label{4.5FS}
\|u_{n}\|_{L^{\infty}(B^{c}_{R}(\tilde{y}_{n}))}<\gamma.
\end{equation}
Moreover, up to extract a subsequence, we may assume that 
\begin{equation}\label{4.6FS}
\|u_{n}\|_{L^{\infty}(B_{R}(\tilde{y}_{n}))}\geq \gamma.
\end{equation}
Indeed, if \eqref{4.6FS} does not hold, in view of \eqref{4.5FS} we can see that $\|u_{n}\|_{L^{\infty}(\R^{N})}<\gamma$. Then, by using $\langle J'_{\e_{n}}(u_{n}), u_{n}\rangle=0$, \eqref{4.4FS} and  
$$
\left\|\frac{1}{|x|^{\mu}}*G(\e x, u_{n})\right\|_{L^{\infty}(\R^{N})}<C_{0},
$$
we can infer
\begin{equation*}
\|u_{n}\|_{\e_{n}}^{2}\leq C_{0}\int_{\R^{N}} g(\e_{n} x, u_{n}) u_{n} \,dx\leq \frac{C_{0}}{\ell_{0}} \int_{\R^{N}} V_{0} u_{n}^{2} \, dx
\end{equation*}
which together with $\frac{C_{0}}{\ell_{0}}<\frac{1}{2}$ yields $\|u_{n}\|_{\e_{n}}=0$, and this gives a contradiction.
As a consequence, \eqref{4.6FS} holds. Taking into account \eqref{4.5FS} and \eqref{4.6FS} we can deduce that the maximum points $p_{n}\in \R^{N}$ of $u_{n}$ belong to $B_{R}(\tilde{y}_{n})$. Therefore, $p_{n}=\tilde{y}_{n}+q_{n}$ for some $q_{n}\in B_{R}(0)$. Hence, $\eta_{n}=\e_{n} \tilde{y}_{n}+\e_{n} q_{n}$ is the maximum point of $\hat{u}_{n}(x)=u_{n}(x/\e_{n})$. Since $|q_{n}|<R$ for any  $n\in \mathbb{N}$ and $\e_{n} \tilde{y}_{n}\rightarrow y_{0}\in M$, from the continuity of $V$ we can infer that
$$
\lim_{n\rightarrow \infty} V(\eta_{\e_{n}})=V(y_{0})=V_{0},
$$
which ends the proof of the Theorem \ref{thmf}.
\end{proof}

\noindent {\bf Acknowledgements.} 
The author would like to express his sincere thanks to the anonymous referee for his/her careful reading of the manuscript and valuable comments and suggestions.\\
The paper has been carried out under the auspices of the INdAM - GNAMPA Project 2017 titled: {\it Teoria e modelli per problemi non locali}.

\end{document}